\setlist[enumerate]{leftmargin=25pt}
\setlist[itemize]{leftmargin=25pt}
\theoremstyle{plain} \newtheorem{Thm}{Theorem}[section]
\newtheorem{prop}[Thm]{Proposition}
\newtheorem{lem}[Thm]{Lemma}
\newtheorem{cor}[Thm]{Corollary}
\theoremstyle{definition}
\newtheorem{de}[Thm]{Definition}
\newtheorem{obs}[Thm]{Observation}
\newtheorem{rem}[Thm]{Remark}
\newtheorem{clm}{Claim}
\newtheorem{que}{Question}
\newtheorem{con}{Conjecture}
\newtheorem{summary}{Reduction}
\newtheorem*{theorem*}{Theorem}
\theoremstyle{remark}
\newcommand{\tG}{\widetilde{G}}
\newcommand{\rad}{{\sf Rad}}
\newcommand{\wh}[1]{\widehat{#1}}
\newcommand{\ord}{{\rm ord}}
\newcommand{\comment}[1]{}
\def\N{\mathbb{N}}
\def\Q{\mathbb{Q}}
\def\Z{\mathbb{Z}}
\def\C{\mathbb{C}}
\def\R{\mathbb{R}}
\def\La{\Lambda}
\def\la{\lambda}
\def\lala{\La-\La}
\def\stb{,\ldots ,}
\def\stb{,\ldots ,}
\newcommand{\set}[1]{{\left\{{#1}\right\}}}
\DeclarePairedDelimiter\abs{\lvert}{\rvert}
\newcommand{\vn}{\ensuremath{\varnothing}}
\newcommand{\ssq}{\ensuremath{\subseteq}}
\newcommand{\Php}{\ensuremath{\Phi_p(x^{N/p})}}
\newcommand{\Phq}{\ensuremath{\Phi_q(x^{N/q})}}
\newcommand{\pdiv}{\ensuremath{\mid\!\mid}}
\newcommand{\ol}{\ensuremath{\overline}}
\newcommand{\out}{}
\begin{document}

\title{On the discrete Fuglede and Pompeiu problems}
\date{}
\author{Gergely Kiss \thanks{University of Luxembourg, Faculty of Science, Mathematics Research Unit, e-mail: kigergo57@gmail.com}
\qquad Romanos Diogenes Malikiosis \thanks{Aristotle University of
Thessaloniki, Department of Mathematics. e-mail: rwmanos@gmail.com}
\qquad G\'abor Somlai \thanks{E\"otv\"os Lor\'and University,
Faculty of Science, Institute of Mathematics and MTA-ELTE Geometric
and Algebraic Combinatorics Research Group, NKFIH 115799.
    e-mail: gsomlai@cs.elte.hu}
\qquad M\'at\'e Vizer \thanks{Alfr\'ed R\'enyi Institute of
Mathematics, Hungarian Academy of Sciences. e-mail:
vizermate@gmail.com.}}

\maketitle

\begin{abstract}

We investigate the discrete Fuglede's conjecture and Pompeiu problem
on finite abelian groups and develop a strong connection between the
two problems. We give a geometric condition under \out{which} a
multiset of a finite abelian group has the discrete Pompeiu
property. Using this description and the revealed connection we
prove that Fuglede's conjecture holds for $\mathbb{Z}_{p^n q^2}$,
where $p$ and $q$ are different primes. In particular, we show that
every spectral subset of $\mathbb{Z}_{p^nq^2}$ tiles the group.
Further, using our combinatorial methods we give a simple proof for
the statement that Fuglede's conjecture holds for $\Z_p^2$.

\end{abstract}

\section{Introduction}

In this article we deal with the discrete version of Fuglede's
conjecture and Pompeiu problem, both originated in analysis. We
build a relationship between them that helps us to provide new
results for Fuglede's conjecture in the discrete setting.

The following question was asked by Pompeiu \cite{P1929}. Take a
continuous function $f$ on the plane whose integral is zero on every
unit disc. Does it follow that $f$ is constant zero? The answer for
this question is no, but it initiated several different type of
investigations in various settings, and in some cases the answer is
affirmative for an analogous question. We give an implicit
characterization of the non-Pompeiu sets for finite abelian groups.

Fuglede conjectured \cite{F1974} that a bounded domain $S \subset
\mathbb{R}^d$ tiles the $d$-dimensional Euclidean space if and only
if the set of $L^2(S)$ functions admits an orthogonal basis of
exponential functions. This conjecture was disproved by Tao
\cite{T2004}.

A discrete version of Fuglede's conjecture might be formulated in
the following way. A subset $S$ of a finite abelian group $G$ tiles
$G$ if and only if the character table of $G$ has a submatrix, whose
rows are indexed by the elements of $S$, which is a complex Hadamard
matrix. This version of Fuglede's conjecture is not only interesting
for its own by also plays a crucial role in the above mentioned
counterexample of Tao. Actually his counterexample (in
$\mathbb{R}^{5}$) is based on a counterexample for elementary
abelian $p$-groups of finite rank.

Fuglede's conjecture is especially interesting for finite cyclic
groups, since e.g. every tiling of $\mathbb{Z}$ is periodic, so it
goes back to a tiling of a finite cyclic group. However, not much is
known for cyclic groups. A recent paper of the second author and
Kolountzakis \cite{MK17} shows that Fuglede's conjecture holds for
any cyclic group of order $p^nq$, where $p$ and $q$ are different
primes.

Our main contribution towards Fuglede's conjecture for cyclic groups
is to connect this problem with the Pompeiu problem, introduce more
combinatorial ideas and verify it for yet unknown cases: cyclic
groups of order $p^nq^2$, $n\geq 1$ (see Theorem \ref{thmfotetel}).

Further using our techniques we give a neat and combinatorial proof
for the previously known fact (proved by Iosevich, Mayeli and
Pakianathan \cite{{IMP2017}}) that Fuglede's conjecture holds for
$\mathbb{Z}_p^2$ (see also Theorem \ref{ThmZp2}).


\

\textbf{Structure of the paper.} The paper is organized as follows.
Section \ref{section2} is devoted to a detailed introduction to
Fuglede's conjecture and the Pompeiu problem, introducing also the
discrete version of them. Further we establish a connection between
the two problems. In section \ref{sectionPompfor abelian} we give
some sort of solution for the Pompeiu problem for
abelian groups that we apply later in Section 
\ref{sectiont2}. Section \ref{section5} and \ref{section6} are
preparations for the proof of Theorem \ref{thmfotetel}. In Section
\ref{section5} we reduce the cases to a special one partly based on
our results concerning the Pompeiu problem. In Section
\ref{section6} we prove some technical lemmas, that we
constantly use later. Section \ref{sectiont2} 
is devoted to the main proof of Theorem \ref{thmfotetel}. Finally,
in the Appendix we give an alternative proof of Theorem
\ref{ThmZp2}.

\section{Fuglede and Pompeiu problem}\label{section2}
\subsection{Fuglede's Spectral Set Conjecture}

The original conjecture of Fuglede \cite{F1974} was formulated as
follows. Let $\Omega$ be a measurable subset of $\mathbb{R}^{n}$ of
positive Lebesgue measure. A set $\Omega\subseteq \R^n$ is called
{\it spectral} if there is a set $\Lambda\subset \R^n$ such that
$\{e^{i\lambda \cdot x}:\lambda \in \Lambda, \ x\in \Omega\}$ is an
orthogonal basis of $L^2(\Omega)$. Then $\Lambda \subseteq
\mathbb{R}^{n}$ is called the \textit{spectrum} of $\Omega$.


We say that $S$ is a \textit{tile} of $\mathbb{R}^{n}$, if there is
a set $T \subset \mathbb{R}^{n}$ such that almost every point of
$\mathbb{R}^{n}$ can be uniquely written as $s + t$, where $s \in S$
and $t \in T$. In this case, we say that $T$ is the \textit{tiling
complement of $S$}.

Fuglede's Spectral Set Conjecture (that we just call Fuglede's
conjecture) \cite{F1974} states the following:

\begin{con}
$\Omega$ is spectral if and only if $\Omega$ is a tile.
\end{con}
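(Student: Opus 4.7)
The plan is to translate both the spectral and tiling properties into conditions on the Fourier transform $\widehat{\mathbf{1}}_{\Omega}$ of the indicator function, and try to match them. For the ``tile implies spectral'' direction, I would use that if $\Omega + T = \mathbb{R}^n$ (in the a.e.\ sense), then $\mathbf{1}_\Omega \ast \mu_T = 1$ for the appropriate measure $\mu_T$ supported on $T$. Taking Fourier transforms gives $\widehat{\mathbf{1}}_\Omega \cdot \widehat{\mu}_T = c \, \delta_0$, so the zero set $Z(\Omega) := \{\xi : \widehat{\mathbf{1}}_\Omega(\xi)=0\}$ contains $\operatorname{supp}(\widehat{\mu}_T) \setminus \{0\}$. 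The hope is to extract a set $\Lambda \ni 0$ with $\Lambda - \Lambda \subseteq Z(\Omega) \cup \{0\}$ of the ``correct density'' $|\Omega|^{-1}$ from this zero set, which would be a spectrum.

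For ``spectral implies tile'', I would start from the orthogonality condition: $\Lambda-\Lambda \subseteq Z(\Omega) \cup \{0\}$. This is already a packing condition in the Fourier sense. The aim would be to promote this to a genuine tiling via completeness of the exponential system, hopefully producing a tiling complement $T$ (for instance related to the support of an inverse Fourier object built from $\Lambda$). In favorable situations (rational spectra, lattice $\Lambda$) this is tractable, and one would try to reduce to those by structural arguments.

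The main obstacle is that, in full generality, neither direction reduces to pure Fourier bookkeeping: the spectrum $\Lambda$ need not be a lattice (or even uniformly discrete in a structured way), and likewise a tiling complement may be highly irregular, so there is no canonical recipe to pass between them. A natural strategy, very much in the spirit of the present paper, is to exploit periodicity to pass from $\mathbb{R}^n$ to a discrete setting (finite abelian groups), where $\widehat{\mathbf{1}}_\Omega$ becomes a character sum and the zero sets can be analyzed combinatorially via the rational structure of the group. I would expect to handle low-rank cyclic and elementary abelian groups by case analysis of the possible vanishing patterns, using the Pompeiu-type reformulation developed in the previous sections to translate ``vanishing sums of characters'' into geometric statements on multisets. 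This is where the real work, and the real difficulty, lies: controlling the interaction between the prime factorization of $|G|$ and the admissible zero patterns, which is precisely what breaks down in the known high-dimensional counterexamples.
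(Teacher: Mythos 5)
The statement you are attempting to prove is Fuglede's original conjecture, which the paper records as a \emph{conjecture} precisely because it has no proof --- and indeed cannot have one in the stated generality, since it is false for $n\ge 3$. Tao \cite{T2004} constructed a spectral set in $\mathbb{R}^5$ that does not tile, Matolcsi \cite{M2004} brought this down to dimension $4$, and Kolountzakis--Matolcsi \cite{KM2006a} and Farkas--Matolcsi--M\'ora \cite{FMM2006} gave counterexamples to \emph{both} directions in dimension $3$. The paper itself states all of this in the surrounding text; it never proves the conjecture, and what it actually establishes are restricted discrete analogues (Theorem \ref{thmfotetel} for $\mathbb{Z}_{p^nq^2}$ and Theorem \ref{ThmZp2} for $\mathbb{Z}_p^2$). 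So no proof strategy can close the full statement; at best one can hope for partial results in low dimensions or for structured classes of sets.

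Beyond this global obstruction, the specific reduction you propose --- passing to finite abelian groups and analyzing vanishing patterns of character sums --- is exactly the mechanism by which the counterexamples are \emph{constructed}, not a route to a proof. Tao's example originates as a $6$-element spectral subset of $\mathbb{Z}_3^5$, which cannot tile because $6\nmid 3^5$, and is then lifted to $\mathbb{R}^5$; the paper's displayed implications $\mathbf{S-T}(\mathbb{R})\Rightarrow\mathbf{S-T}(\mathbb{Z})\Rightarrow\mathbf{S-T}(\mathbb{Z}_{\mathbb{N}})$ only let discrete counterexamples propagate upward, while discrete positive results say nothing about $\mathbb{R}^n$. The concrete gap in your sketch is the step ``promote the packing condition $\Lambda-\Lambda\subseteq Z(\Omega)\cup\{0\}$ to a genuine tiling'': this promotion is precisely what fails in $\mathbb{Z}_3^5$, where the admissible zero patterns support an orthogonal exponential family of full size with no corresponding tiling. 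If your goal is to contribute something provable, you should target the discrete special cases the paper actually addresses, where the interaction between the prime factorization of $|G|$ and the structure of vanishing sums of roots of unity can be controlled.
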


The conjecture was proved by Fuglede \cite{F1974} in the special
case, when the tiling complement or the spectrum is a lattice in
$\R^n$. Also it has been verified by Fuglede that the $L^2$-space
over a triangle or a disc does not admit an orthogonal basis of
exponentials. (The proof for the disc was corrected by Iosevich,
Katz
and Pedersen \cite{IKP99}.) The conjecture was further verified in some 
other cases (see e.g. \cite{IKT2003,L2001}).

Tao \cite{T2004} disproved the spectral-tile direction of the
conjecture by constructing a spectral set in $\R^5$ that does not
tile the $5$ dimensional space. As an extension of Tao's work,
Matolcsi \cite{M2004} proved that (the same direction of) the
conjecture fails in dimension $4$ as well. Further, Kolountzakis and
Matolcsi \cite{KM2006,KM2006a} and Farkas, Matolcsi and M\'ora
\cite{FMM2006}
provided counterexamples in dimension $3$ for each 
direction of the
conjecture.

\

\textbf{Discrete abelian groups}. Fuglede's conjecture can be
naturally stated for
other groups, for example $\mathbb{Z}$.  
These cases are not only interesting on their own, but they also
have connection with the original case, since e.g.~in his disproof
of the 5-dimensional case, Tao constructed a spectral set in
$\mathbb{Z}^{5}_{3}$ (containing $6$ elements, hence not a tile, as
the cardinality of any tile of a finite abelian group divides the
order of the group), then he lifted this counterexample to
$\mathbb{R}^{5}$. Similar strategy was carried out by Kolountzakis
and Matolcsi in the disproof of the other direction of the original
conjecture, see \cite{KM2006a}. We also mention some examples, where
Fuglede's conjecture holds. These include finite cyclic $p$-groups
\cite{L2002}, $\mathbb{Z}_p \times \mathbb{Z}_p$ \cite{IMP2017}, and
$\mathbb{Q}_p$ \cite{FFLS2015}, the field of $p$-adic numbers.

Borrowing the notation from \cite{DL2014} and \cite{MK17}, we write
$\mathbf{S-T}(G)$ (resp. $\mathbf{T-S}(G)$), if the $Spectral
\Rightarrow Tile$ (resp. $Tile \Rightarrow Spectral$) direction of
Fuglede's conjecture holds in $G$ for every bounded subset. The
above mentioned connection between the conjecture on $\mathbb{R}$,
on $\mathbb{Z}$ and on finite cyclic groups is summarized below
\cite{DL2014} (where $\mathbf{T-S}(\mathbb{Z}_{\mathbb{N}})$ means
that $\mathbf{T-S}(\mathbb{Z}_n)$ holds for every $n \in \N$):

\
$$\mathbf{T-S}(\mathbb{R}) \Leftrightarrow \mathbf{T-S}(\mathbb{Z})
\Leftrightarrow \mathbf{T-S}(\mathbb{Z}_{\mathbb{N}}),$$

$$\mathbf{S-T}(\mathbb{R}) \Rightarrow \mathbf{S-T}(\mathbb{Z})
\Rightarrow \mathbf{S-T}(\mathbb{Z}_{\mathbb{N}}).$$ \

According to this, a counterexample to the $Spectral \Rightarrow
Tile$ direction in a finite cyclic group can be lifted to a
counterexample in $\mathbb{R}$; on the other hand, if the same
direction of the conjecture were true for every cyclic group or even
in $\mathbb{Z}$, this would hold no meaning for the original
conjecture in $\mathbb{R}$.

    Concerning tiles in discrete groups it was proved in \cite{Newman1997} that if $S$ is a finite set,
which tiles $\Z$ with tiling complement $T$, then $T$ is periodic
i.e. $T+N=T$ for some $N \in \Z$. This shows that every tiling of
the integers reduces to a tile of a cyclic group $\Z_N$ for some $N
\in \mathbb{N}$.

We also mention a related result of R\'edei \cite{Redei65}. We say
that $A_1+ \dotsb +A_k$ is a {\it factorization} of the abelian
group $G$ if every element of $G$ can uniquely be written as the sum
of one element from each $A_i$.

\begin{Thm}(\cite{Redei65})\label{redei}
Let $G = A_1 +A_2+ \dotsb + A_n$ be a factorization of an abelian
group $G$, where each $A_i$ contains $0$ and is of prime
cardinality. Then at least one of the sets $A_i$ is a subgroup of
$G$.
\end{Thm}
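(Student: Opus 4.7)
I would attack this via character theory on the dual group $\widehat{G}$ together with induction on $|G|$.

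First, I translate the factorization condition into Fourier language. For each $A_i$, set $\widehat{A}_i(\chi) := \sum_{a \in A_i} \chi(a)$. A direct computation shows that $G = A_1 + \dotsb + A_n$ being a factorization is equivalent to
\[
\prod_{i=1}^n \widehat{A}_i(\chi) = |G| \cdot \mathbf{1}_{\chi = 1} \quad \text{for every } \chi \in \widehat{G},
\]
so in particular for every nontrivial character $\chi$ at least one $\widehat{A}_i(\chi)$ vanishes.

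Second, I invoke the classical structure theorem for vanishing sums of roots of unity: if $\zeta_1 + \dotsb + \zeta_p = 0$ where $p$ is prime and each $\zeta_j$ is a root of unity, then the multiset $\{\zeta_1,\dotsc,\zeta_p\}$ has the form $\omega \cdot \{1, \xi, \dotsc, \xi^{p-1}\}$ for some root of unity $\omega$ and a primitive $p$-th root of unity $\xi$. Applying this with $\widehat{A}_i(\chi) = 0$, and using that $\chi(0) = 1$ lies among the values $\chi(a)$, one obtains that $\{\chi(a) : a \in A_i\}$ is precisely the full set of $p_i$-th roots of unity. In words: whenever $\widehat{A}_i(\chi) = 0$, the character $\chi$ acts on $A_i$ exactly as it would if $A_i$ were a cyclic subgroup of order $p_i$.

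Third, I would proceed by induction on $|G|$, the base case $|G|$ prime being immediate (the only admissible factorization is the trivial one with a single factor equal to $G$). For the inductive step, pick an index, say $i = 1$, and write $p := p_1$. Using Step~2, locate a character $\chi$ of order $p$ with $\widehat{A}_1(\chi) = 0$; the rigidity provided by Step~2 severely constrains $A_1$. The aim is then to show that either $A_1$ is already a subgroup, or else $A_1$ may be replaced in the factorization by an actual subgroup of order $p$, thereby strictly decreasing the number of non-subgroup factors and allowing the induction to close.

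The principal obstacle is carrying out this replacement step. One must analyze how the fine structure forced on $A_1$ by Step~2 interacts with the remaining factors $A_2, \dotsc, A_n$, typically by projecting the factorization to a well-chosen quotient $G/H$ (with $H$ the order-$p$ subgroup detected by $\chi$) and verifying that the image data again form a valid prime-cardinality factorization containing zero. This bookkeeping -- ensuring that the projected factors remain disjoint, contain $0$, and retain prime cardinality -- is the technically delicate heart of R\'edei's original argument and is where any attempted induction would demand the greatest care.
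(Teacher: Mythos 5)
The paper offers no proof of this statement: it is quoted directly from R\'edei's 1965 paper \cite{Redei65} and used as a black box (only in the Appendix, to get the \emph{Tiling} $\Rightarrow$ \emph{Spectral} direction for $\Z_p^2$). So there is nothing internal to compare your argument with; it must stand on its own, and it does not yet.

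There are two genuine gaps. First, your Step 2 is false as stated. A vanishing sum of $p$ roots of unity with $p$ prime need not be a rotate of the full set of $p$-th roots of unity: for $p=5$ the multiset $\{1,-1,1,\xi_3,\xi_3^2\}$ sums to zero but is not of the form $\omega\{1,\xi_5,\dots,\xi_5^4\}$, and it even contains the value $1$, so the presence of $\chi(0)=1$ does not rescue the claim. The rotated-$\Phi_p$ conclusion is guaranteed only when all the roots of unity involved have $p$-power order (e.g.\ when $\chi$ has order $p^k$, since the relation module of $p^k$-th roots of unity is generated by rotates of $1+\xi_p+\cdots+\xi_p^{p-1}$); for a character of composite order the length-$p$ sum can split into shorter vanishing subsums supported on different primes, exactly as in the Lam--Leung description \cite{vanishingsum}, and the rigidity you rely on disappears. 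Second, and more seriously, the replacement/induction step that you correctly identify as the heart of the matter is not carried out at all: you describe the obstacle rather than overcome it. R\'edei's theorem is a deep strengthening of Haj\'os's theorem, and the passage from ``each nontrivial character annihilates some factor'' to ``some factor is a subgroup'' is precisely where every known proof invests its effort. As it stands, your text is a plausible opening move (Step 1 is correct) plus an acknowledgement of the difficulty, not a proof.
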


\

\textbf{Cyclic groups.} Surprisingly, despite their previously
described role in the discrete version of Fuglede's conjecture, not
much is known for cyclic groups. A recent result of the second
author and Kolountzakis \cite{MK17} proved Conjecture
\ref{fugconcyc} (see later) for $\mathbb{Z}_{p^n q}$. They also
wrote that most probably, their result might be extended to cyclic
groups of order having $2$ different prime divisors but they haven't
succeeded yet.

\

As we will mainly deal with cyclic groups, let us state the
conjecture again in this setting. First let us define spectral sets
and tiles in cyclic groups also.

\begin{de}\label{dede}
For a set $S \subset \mathbb{Z}_N$, we say that that $S$ is {\it
spectral} if $L^2(S)$ has an orthogonal basis of exponentials
(indexed by $\La$). This is equivalent to the following two
conditions to hold:
\smallskip

1. There is $\La \subset \mathbb{Z}_N$ such that any $f: S
\rightarrow \mathbb{C}$ can be written as the $\C$-linear
combination of exponentials of the form \out{
\[ \xi_N^{\la \cdot x } ~~~ (\la \in \La), \]
where the product $\la \cdot x$ is taken modulo $N$ and
$\xi_N=e^{2\pi i/N}$.}


\smallskip

2. For any two different $\la,\la' \in  \La$ we have: $$\sum_{x \in
S} \xi_N^{(\la-\la')\cdot x }=0$$ (i.e. the representations
$\chi_{\lambda}(x)=\xi_N^{\la \cdot x} $ and
$\chi_{\lambda'}(x)=\xi_N^{\la' \cdot x }$ are orthogonal).
\end{de}
We denote $ \{\chi_{\la} \mid \la \in \La\}$ by $\chi_{\Lambda}$.
\begin{rem}
We note that if $S$ is a spectral set, then $\abs{S}=\abs{\La}$
follows from Definition \ref{dede}. \out{Condition 2 further implies
that
\begin{equation}\label{Ladiff}
 \lala\subseteq \set{0} \cup \set{x\in\Z_N:\wh{1}_S(x)=0},
\end{equation}
where $1_S$ is the characteristic function of $S$, and
$\wh{f}(x)=\sum_{y\in \Z_N}f(y)\xi_N^{-x\cdot y}$ is the discrete
Fourier transform of $f:G\mapsto\C$, as usual}.
\end{rem}
\begin{de}
Let $G$ be a discrete abelian group. We say that $S \subset G$  {\it
tiles} $G$ if there exists $T \subset G$ such that $S+T= G$, where
$S+T$ is the set of elements of $G$ of the form $s+t$ ($s \in S, ~t
\in T$), counted with multiplicity, so we have each $g \in G$
exactly once. In this case we say that $T$ is a {\it tiling
complement} of $S$ in $G$.
\end{de}

For cyclic groups Fuglede's conjecture can be stated as follows.
\begin{con}\label{fugconcyc} For any $N$ and $S \subset \mathbb{Z}_N$ we have that $S$ is spectral if and only if $S$ tiles $\Z_N$.
\end{con}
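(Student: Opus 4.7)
The plan is to prove both directions of the equivalence separately via Fourier/character-theoretic tools, reducing the problem to vanishing conditions on the mask polynomial $S(x)=\sum_{s\in S}x^s \in \Z[x]/(x^N-1)$, and to handle the prime-power structure of $N$ by induction. Both spectrality of $S$ and the existence of a tiling complement of $S$ translate into statements about the set $Z_S$ of $N$-th roots of unity (equivalently, cyclotomic divisors $\Phi_d(x)$ with $d\mid N$) at which $S(x)$ vanishes: $S$ tiles $\Z_N$ with complement $T$ precisely when $S(x)T(x)\equiv 1+x+\cdots+x^{N-1}\pmod{x^N-1}$, and $S$ is spectral with spectrum $\La$ precisely when all nonzero character differences in $\La-\La$ lie in $Z_S$. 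The first step in the plan is to make these translations precise and record, as a lemma, the resulting equality $|S|=|\La|=|T|=N/|S|$.

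For the direction $\mathbf{T-S}(\Z_N)$ my plan is to invoke the Coven--Meyerowitz program: show that any tile $S\subset\Z_N$ satisfies their conditions (T1) and (T2), from which an explicit spectrum can be constructed (essentially by picking one primitive root of unity of each prime-power order dividing $|S|$). Condition (T1) is a known consequence of tiling, so the work is to promote (T1) to (T2) in the cyclic setting. For a small number of prime factors of $N$ this is tractable --- and already handled in the literature --- but in full generality (T2) remains conjectural.

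For the direction $\mathbf{S-T}(\Z_N)$ my plan is to use the Pompeiu-type machinery developed in Section~\ref{sectionPompfor abelian}, together with R\'edei's Theorem~\ref{redei}, to analyse a hypothetical spectral-but-non-tiling set. Given a spectrum $\La$, the vanishing set of $\hat{1}_S$ contains $\La-\La$; the combinatorial characterisation of non-Pompeiu multisets promises to turn such vanishing information into structural information about $S$ (subgroups along which $S$ is a union of cosets, and prime-power reductions). One then iterates: peel off prime factors of $N$ one at a time and reduce to a cyclic group of strictly smaller order where the statement is already known, with R\'edei's theorem supplying factorisations in which one factor is forced to be a subgroup.

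The hard part will be controlling this induction when $N$ has many distinct prime factors, or large exponents on three or more primes: the bookkeeping of cyclotomic divisors $\Phi_d$ becomes combinatorially rich, and the Tile$\Rightarrow$(T2) half genuinely appears to require new ideas (this is precisely why the paper restricts to $N=p^nq^2$). In all likelihood a full proof of the conjecture as stated would proceed by a simultaneous induction on the number and on the multiplicities of the prime divisors of $N$, with the base cases supplied by \cite{L2002,MK17,IMP2017} and the present paper; the crux, and the main obstacle, is to formulate an inductive hypothesis strong enough to be propagated across arbitrary prime-factorisation patterns of $N$.
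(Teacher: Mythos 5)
The statement you were asked to prove is labelled as a \emph{conjecture} in the paper, and the paper does not prove it: it only establishes the special case $N=p^nq^2$ (Theorem \ref{thmfotetel} for the \emph{Spectral}$\Rightarrow$\emph{Tiling} direction, combined with the known Coven--Meyerowitz/\L{}aba results for the other direction). Your proposal, to its credit, recognizes this and is honest about where it stops, but as a proof it has genuine gaps in both directions and therefore does not establish the statement. For \emph{Tiling}$\Rightarrow$\emph{Spectral}, your plan reduces to showing that every tile of $\Z_N$ satisfies \ref{t2}; this is precisely the Coven--Meyerowitz conjecture, which is open whenever $\abs{S}$ has three or more distinct prime factors, and you offer no mechanism to close it --- you say so yourself. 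For \emph{Spectral}$\Rightarrow$\emph{Tiling}, the Pompeiu machinery of Section \ref{sectionPompfor abelian} (Proposition \ref{propsqfree}) only yields that $S$ is a \emph{rational-weighted} combination of $\Z_p$-coset indicators over the primes $p\mid N$; upgrading this to an honest disjoint union of cosets uses the Lam--Leung structure of vanishing sums together with Proposition \ref{proppq}\ref{itempqb}, and this upgrade is specific to $N$ with at most two prime divisors (asymmetric minimal vanishing sums of $N$-th roots of unity exist as soon as $N$ has three prime factors, which is exactly why the paper restricts to $p^nq^2$). R\'edei's Theorem \ref{redei} likewise requires every factor in the factorization to have prime cardinality, a hypothesis a general spectral set will not meet.

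The deeper structural problem with your induction is that the reduction lemmas of Section \ref{section5} (Lemma \ref{propgenerate}, Lemma \ref{lem1}) only dispose of the cases where $S$ fails to generate $G$ or where $\chi_\La$ has a nontrivial common kernel; the residual case, in which both $S$ and $\La$ generate $\Z_N$, is where all of the paper's work in Sections \ref{section6}--\ref{sectiont2} takes place (the case analysis on $S(\xi_N^q)$, $S(\xi_N^{q^2})$, Lemma \ref{q2divides}, and the construction of the auxiliary spectral pair $(\ol{S},\ol{\La})$), and none of that machinery is available, or even formulable, for general $N$. ``Peel off prime factors one at a time'' is not an inductive step until you specify what invariant is being reduced and why the residual generating case terminates; as written, your proposal is an accurate survey of known results and obstacles rather than a proof, and the statement remains open beyond the cases $p^nq$, $pqr$, and the $p^nq^2$ case proved here.
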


There has been some recent progress on this conjecture over the last few years. The known results for the \emph{Tiling$\Rightarrow$Spectral} direction follow from the
work of Coven \& Meyerowitz \cite{CM1999} and \L{}aba \cite{L2002}. Coven \& Meyerowitz proved that if a finite subset $A$ of the integers satisfies two conditions \ref{t1} 
and \ref{t2} (to be defined below) then it tiles $\Z$ by translations. The inverse holds when the cardinality of $A$ is divisible by at most two primes (Corollary to
\cite[Theorem B2]{CM1999}). 
\L{}aba then connected these properties with Fuglede's conjecture on $\Z$, proving that if $A$ satisfies \ref{t1} and \ref{t2}, then it has a spectrum 
\cite[Theorem 1.5(i)]{L2002}, therefore, if $A$ tiles $\Z$ and its cardinality is divisible by at most two distinct primes, then it must be spectral 
\cite[Corollary 1.6(i)]{L2002}. The passage to cyclic groups of order $p^nq^m$ is easily done through \cite[Lemma 2.3]{CM1999}, which implies that if $A$ tiles $\Z$ and $\abs{A}=p^aq^b$,
then there is a (possibly different) tiling of $\Z$ by translates of $A$ with period $N=p^nq^m$ for some $m,n\in\N$. In other words, $A$ could be considered as a subset of $\Z_N$, and 
the \emph{Tiling$\Rightarrow$Spectral} direction in $\Z_N$ follows verbatim using the above results. 
For a proof containing all the above arguments strictly in the setting of cyclic groups, we refer the reader to \cite[Section 3]{MK17}.

Concerning the case for $N$ square free, the \emph{Tiling$\Rightarrow$Spectral} direction in $\Z_N$ follows easily from the fact that any tile of $\Z_N$ is a set of
coset representatives of a subgroup of $\Z_N$, again from combined arguments from \cite{CM1999} and \cite{L2002}. This fact was posed as a problem in 
Tao's blog\footnote{\url{https://terrytao.wordpress.com/2011/11/19/some-notes-on-the-coven-meyerowitz-conjecture/}}, which was subsequently solved by
\L{}aba\footnote{\url{https://terrytao.wordpress.com/2011/11/19/some-notes-on-the-coven-meyerowitz-conjecture/\#comment-121464}}
and Meyerowitz\footnote{\url{https://terrytao.wordpress.com/2011/11/19/some-notes-on-the-coven-meyerowitz-conjecture/\#comment-112975}}. Their arguments were based 
on \cite[Lemma 2.3]{CM1999}, which implies that a tile $A$ in $\Z_N$ with $N$ square free, accepts the subgroup $M\Z_N$ as a tiling complement, where $M=\abs{A}$. This is
one instance where the properties \ref{t1} and \ref{t2} hold trivially, thus $A$ is also spectral due to \cite[Theorem 1.5(i)]{L2002}. For a self-contained proof 
of the \emph{Tiling$\Rightarrow$Spectral} direction in the setting of cyclic groups of square free order, which is along the same lines, we refer the reader to \cite{Shi18}.

The reverse direction, \emph{Spectral$\Rightarrow$Tiling}, is considerably harder, and the best results so far are the proofs for 
$N=p^nq$ \cite{MK17} and $N=pqr$ \cite{Shi18}, where $p$, $q$, $r$ distinct
primes. The main tool that is introduced in the \emph{Spectral$\Rightarrow$Tiling} direction is the structure of the vanishing sums of roots of unity \cite{vanishingsum}.

In this paper we verify Conjecture \ref{fugconcyc} for cylic groups
of order $p^nq^2$ by proving the following.
\begin{Thm}\label{thmfotetel}
Let $p$ and $q$ be two different primes. Then we have $\mathbf{S-T}
\ (\mathbb{Z}_{p^nq^2})$, for every $n\geq1$.
\end{Thm}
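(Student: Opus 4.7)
The plan is to follow the standard framework for the \emph{Spectral}$\Rightarrow$\emph{Tile} direction in cyclic groups: start with a spectral pair $(S,\Lambda)$ in $\Z_{p^nq^2}$, translate the orthogonality relations into vanishing statements for the mask polynomial $S(x)=\sum_{s\in S}x^s$ at roots of unity (equivalently, divisibility of $S(x)$ by cyclotomic factors $\Phi_d(x)$ for certain $d\mid N$), and then combine the structure theorem for vanishing sums of roots of unity \cite{vanishingsum} with the Pompeiu machinery built up in Sections \ref{sectionPompfor abelian}--\ref{section6} to verify the Coven--Meyerowitz conditions (T1) and (T2). Once these hold, \L{}aba's theorem \cite{L2002} produces a tiling complement, proving $\mathbf{S-T}(\Z_{p^nq^2})$.

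Write $N=p^nq^2$ and let $\Lambda$ be a spectrum of $S$. By Definition \ref{dede}, $|S|=|\Lambda|$ and $(\Lambda-\Lambda)\setminus\{0\}$ lies in the zero set of $\hat{1}_S$, so each such difference forces the divisibility of $S(x)$ by some $\Phi_d(x)$ with $d\mid N$. A first cardinality analysis yields $|S|=p^aq^b$ with $0\le a\le n$ and $0\le b\le 2$; the cases $b\le 1$ (where $|S|\mid p^nq$) reduce via the known $\mathbf{S-T}(\Z_{p^nq})$ of \cite{MK17}, and the degenerate cases ($S$ contained in a coset of a proper subgroup, or $|S|$ a pure prime power) follow from Theorem \ref{redei} together with \cite{L2002}. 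The substantive case is $b=2$ and $a\ge 1$. Here the Pompeiu-based geometric characterization from Section \ref{sectionPompfor abelian} is applied (via the reductions of Section \ref{section5}) to the multiset canonically attached to $(S,\Lambda)$ through Fourier duality: this constrains the distribution of $\Lambda$ inside $\Z_N$, forcing either a large portion of $\Lambda$ to lie in a fixed coset of the unique subgroup of order $p$ or of order $q$, or $\Lambda$ itself to be invariant under translation by one of these subgroups. Dualizing yields a short list of canonical configurations of $S$ that still need to be handled.

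For each such configuration, the final step (Section \ref{sectiont2}) uses the technical lemmas of Section \ref{section6} to show that the divisor set $\{d\mid N:\Phi_d(x)\mid S(x)\}$ contains, for every prime power $p^i$ with $i\le a$ and every $q^j$ with $j\le 2$, a divisor $d$ of $p$-adic valuation exactly $i$ (respectively $q$-adic valuation exactly $j$), which is condition (T1); and moreover that these divisors can be chosen compatibly along a single divisor chain of $N$, which is condition (T2). The main obstacle is precisely the two-layer $q$-analysis: with two nested $q$-primary subgroups, a nontrivial vanishing pattern of $\hat 1_S$ at characters of order $q^2$ is \emph{a priori} compatible with several inequivalent patterns at characters of order $q$, so the purely Fourier-analytic argument of \cite{MK17} no longer pins down (T2). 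Resolving this is where the Pompeiu-type combinatorial constraints from Sections \ref{sectionPompfor abelian}--\ref{section5} become indispensable, as they rule out the pathological spectra $\Lambda$ that are invisible to Fourier methods alone; once these are eliminated, (T1) and (T2) both hold for $S$, and \L{}aba's theorem closes the argument.
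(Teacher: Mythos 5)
This is an outline of a strategy, not a proof; the substantive steps are either missing or incorrect. Two concrete problems. First, your ``first cardinality analysis'' asserting $|S|=p^aq^b$ is unjustified: for a \emph{spectral} set (unlike a tile) it is not known a priori that $|S|$ divides $N$, and the paper never claims this --- it only derives $pq\mid|S|$ from equation \eqref{ed}, and obtains $|S|=p^kq^2$ only under the extra hypothesis $q^2\mid|S|$ combined with a $p$-adic digit counting bound. Second, even granting that claim, reducing the ``$b\le1$'' cases to $\mathbf{S-T}(\Z_{p^nq})$ is a non sequitur: $S$ still sits in $\Z_{p^nq^2}$ and may generate it, so knowing only $|S|\mid p^nq$ gives no embedding into $\Z_{p^nq}$. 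The legitimate reductions to smaller groups (Lemma \ref{propgenerate} and Lemma \ref{lem1}) go through $\langle S\rangle$ and the common kernel of $\chi_\La$, not through $|S|$. The appeal to R\'edei's theorem for ``degenerate cases'' is likewise unsupported; in the paper R\'edei is used only for $\Z_p^2$ in the Appendix.

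More fundamentally, everything that actually makes the theorem true is left as an assertion. The paper's engine is (i) Lemma \ref{q2divides}: if $q^2\mid|S|$ then $S$ tiles, proved by bounding $|S_{j\bmod q^2}|\le p^k$ via $p$-adic digits and then verifying \ref{t1} and \ref{t2} explicitly; and (ii) a four-way case split on the vanishing of $S(\xi_N^q)$ and $S(\xi_N^{q^2})$, where the hardest case ($S(\xi_N^q)\neq0=S(\xi_N^{q^2})$) is \emph{not} handled by verifying \ref{t1}, \ref{t2} for $S$ at all: instead one constructs an augmented spectral pair $(\ol{S},\ol{\La})$ with $\ol{S}(x)\equiv S(x)\Phi_q(x^{p^n})$, applies Lemma \ref{q2divides} to $\ol{S}$, and divides out $\Phi_q(x^{p^n})$ from the resulting tiling complement. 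Your sketch promises ``a short list of canonical configurations'' and a verification of \ref{t1}, \ref{t2} in each, but never produces the list, the case division, or any of the counting arguments; in particular nothing in your outline would discover the key facts that the case $S(\xi_N^q)S(\xi_N^{q^2})\neq0$ is outright impossible, or that the remaining cases funnel into the $q^2\mid|S|$ lemma. As written, the proposal cannot be completed without supplying essentially the entire content of Section \ref{sectiont2}.
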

As stated above, $\mathbf{T-S} \
(\mathbb{Z}_{p^nq^m})$ has already been proven \cite{CM1999,L2002}. Combining this result and Theorem
\ref{thmfotetel} we obtain:

\begin{Thm}Let $p$ and $q$ be two different primes. Then
Fuglede's conjecture holds for $ \Z_{p^nq^2}$, $n\geq 1$.
\end{Thm}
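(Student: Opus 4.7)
The plan is to split Fuglede's conjecture for $\mathbb{Z}_{p^nq^2}$ into its two constituent directions and verify each separately. Recall that the conjecture for a group $G$ asserts that $S \subseteq G$ is spectral if and only if $S$ tiles $G$, which is precisely the conjunction of $\mathbf{S-T}(G)$ and $\mathbf{T-S}(G)$. So the natural strategy is to check that both directions hold for $G = \mathbb{Z}_{p^nq^2}$, one of which is already done for us in the paper and one of which is available in the literature.

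First I would dispatch the $\mathbf{T-S}$ direction. This follows from known results. By \cite{CM1999}, every finite subset $A$ of $\mathbb{Z}$ whose cardinality is divisible by at most two distinct primes and which tiles $\mathbb{Z}$ satisfies the Coven--Meyerowitz conditions (T1) and (T2); then \cite[Theorem 1.5(i)]{L2002} promotes this to the statement that such an $A$ is spectral. The passage from $\mathbb{Z}$ to the cyclic group $\mathbb{Z}_{p^nq^2}$ is handled by \cite[Lemma 2.3]{CM1999}, which shows that a tile of $\mathbb{Z}_N$ with cardinality of the form $p^aq^b$ admits a tiling complement of period $p^aq^b$, and therefore may be viewed as a tile of $\mathbb{Z}$ of the right cardinality. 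Putting these pieces together yields $\mathbf{T-S}(\mathbb{Z}_{p^nq^2})$; a self-contained version of this argument in the language of cyclic groups is given in \cite[Section 3]{MK17}, which I would just cite.

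For the $\mathbf{S-T}$ direction, I would simply invoke Theorem \ref{thmfotetel}, which is proved earlier in the paper, and which says exactly that $\mathbf{S-T}(\mathbb{Z}_{p^nq^2})$ holds for every $n \geq 1$. Combining the two directions establishes Fuglede's conjecture for $\mathbb{Z}_{p^nq^2}$.

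The hard part is of course hidden entirely inside Theorem \ref{thmfotetel} itself; the present theorem is essentially a bookkeeping corollary and needs no new ideas beyond citing the two halves. The genuine obstacle — namely, controlling spectral subsets of a cyclic group whose order carries a squared prime factor $q^2$, where the vanishing sum structure of roots of unity becomes more delicate than in the $p^nq$ case of \cite{MK17} — is already overcome in the body of the paper via the reduction arguments of Section \ref{section5} and the Pompeiu-type analysis of Section \ref{sectionPompfor abelian}. I would not attempt to redo any of that work within the proof of this final combined statement.
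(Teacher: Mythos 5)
Your proposal is correct and matches the paper's own treatment exactly: the paper likewise derives this theorem by combining Theorem \ref{thmfotetel} for the \emph{Spectral}$\Rightarrow$\emph{Tiling} direction with the known \emph{Tiling}$\Rightarrow$\emph{Spectral} direction from \cite{CM1999} and \cite{L2002} (with the passage to cyclic groups as in \cite[Section 3]{MK17}). Nothing further is needed.
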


Furthermore, using our method we give in the Appendix a simple proof
of the theorem of Iosevich, Mayeli and Pakianathan \cite{IMP2017},
stating that Fuglede's conjecture holds for $\Z_p^2$.

\subsection{Pompeiu problem}
The problem goes back to the seminal paper of Pompeiu \cite{P1929},
where he asked the following question of integral geometry:
\begin{que} \label{q1}
Let $K$ be a compact set of positive Lebesgue measure. Is it true
that if $f:\mathbb{R}^2\to \C$ is a continuous function that
satisfies
\begin{equation}\label{epo10}
\int_{\sigma(K)} \, f(x,y)d\lambda_xd\lambda_y=0
\end{equation}
for every rigid motion $\sigma$ (here $\lambda$ denotes the Lebesque
measure), then $f$ is identically zero (i.e, $f\equiv 0$)?
\end{que}

If $K$ is the closed disc of radius $r>0$, then the answer is
negative.  It was shown by Chakalov \cite{C1944} (see also
\cite{G1988}) that \eqref{epo10} holds if $f(x,y)=\sin(a(x+iy))$
where $a>0$ and $J_1(ra)=0$ ($J_{\lambda}$ denotes the Bessel
function of order $\lambda$).
On the other hand, for every nonempty \out{polygon} (moreover, for
any convex domain with at least one corner) the answer for Question
\ref{q1} is affirmative by the result of Brown, Taylor and Schreiber
\cite{BST1973}. Recently, Ramm \cite{Ram2017} showed that there
exists a $f\not\equiv 0$ function that satisfies the 3-dimensional
analogue of \eqref{epo10} for a bounded domain $K\subseteq \R^3$
with $\mathcal{C}^1$-smooth
boundary if and only if $K$ is a closed ball. 
Extensive literature is concerned with the Pompeiu problem. For the
history of the problem see \cite{R1997} and the bibliographical
survey \cite{Z1992}.


In our paper we investigate the discrete version of the
\textit{Pompeiu problem} on finite abelian groups. We note that the
discrete Pompeiu problem for infinite abelian groups was studied in
\cite{KLV2016, Pu2013, Ze1978}.


\

\textbf{The discrete version of Pompeiu problem for an abelian group $G$.} 
In the sequel we denote the binary operation acting on an abelian
group $G$ by $+$ (as the usual addition).




\begin{de}
Let $G$ be an abelian group.
\begin{itemize}
\item Let $S$ be a nonempty finite subset of $G$.
We say that {\it $S$ has the discrete Pompeiu property} (shortly $S$
is {\it Pompeiu}) if, whenever $f\colon G\to \C$ satisfies
\begin{equation}\label{epd1} \sum_{s \in S}f(s+x)=0 \textrm{ for every } x  \in G,\end{equation}
then $f\equiv 0$.

We say that $S$ is a {\it non-Pompeiu set with respect to} $f$ if
$f\not\equiv 0$ and satisfies \eqref{epd1}.

One can define the disrcrete Pompeiu property for multisets
similarly.
\item We call $w: G\to \mathbb{Q}$ a {\it weight function\footnote{Every weight function is a rational constant multiple of a weight
function with integer coefficients. The Pompeiu property is
invariant by a nonzero constant multiple of a Pompeiu weight
function. Thus we may restrict our attention for those weight
functions which take its values in $\Z$.}} defined on $G$. We say
that {\it $w$ is a Pompeiu weight function}  if for any $f:G\to
\mathbb{C}$
\begin{equation}\label{epd2}  \sum_{g \in G}w(g)f(g+x)=0 \textrm{ for every } x  \in G \end{equation}
implies that $f\equiv 0$.

We say that $w$ is a {\it non-Pompeiu weight function with respect
to $f$} if $f\not\equiv 0$ and satisfies \eqref{epd2}.
\end{itemize}
\end{de}

Note that $S$ is a Pompeiu set if and only if its characteristic
function is a Pompeiu weight function.




\begin{rem}
We can extend the previous definition for arbitrary finite group
$(G, \cdot)$ and weight function $w$ as follows.

Let $w:G \to \mathbb{Q}$.
We denote by $Cay(G,w)$ the \textit{Cayley graph} of $G$ with respect to $w$. The vertex set of $Cay(G,w)$ is $G$ and $g$ is connected to $h$ by an edge with weight $w({g^{-1}h})$ for every $g,h \in G$. We denote by $A_w$ the adjacency matrix of $Cay(G,w)$. 
Using the adjacency matrix $A_w$ of $Cay(G,w)$ we may also say $w$
is  a Pompeiu weight function if and only if $A_wf=0$ implies
$f\equiv 0$. The equation $A_wf=0$ implies that if $f\not\equiv 0$,
then $f$ is an eigenvector of $A_w$ with eigenvalue 0. So $w$ is a
Pompeiu weight function if and only if $0$ is not an eigenvalue of
$A_w$. In the finite case this is equivalent to $A_w$ is invertible.

We note that if $G$ is a cyclic group, then $A_w$ is a circulant
matrix.

\end{rem}


The set of irreducible representations of a finite abelian group $G$
will be denoted by $\widetilde{G}$. Every irreducible representation
of an abelian group is one dimensional (a character). Thus
$\widetilde{G}$ is a group which is isomorphic to $G$. Note that
$\widetilde{G}$ is usually called the \textit{dual group} of $G$.

It is well-known \cite{reprsteinberg} that the set of irreducible
representations form an orthogonal basis of $L^2(G)$ with respect to
the natural scalar product $[\psi,\chi]:=\sum_{g \in G}
\psi(g)\overline{\chi(g)}$ for $\psi, \chi \in \widetilde{G}$. Thus
every function $f:G\to \C$ can be uniquely written as
\begin{equation}\label{elin}
f(x)=\sum_{\chi\in \tG} c_{\chi}\chi(x) ~~~~~~ \forall x\in G,
\end{equation}
for some $c_{\chi}\in \C$.

The following proposition can be deduced from \cite{Sz2001}. In
order to make our paper self-contained, we provide the proof.
\begin{prop}\label{pirr}

If $w$ is a non-Pompeiu weight function with respect to a function
$f$, then $w$ is a non-Pompeiu weight function with respect to all
irreducible representations $\chi$ which has nonzero coefficient
$c_{\chi}$ in \eqref{elin}.
\end{prop}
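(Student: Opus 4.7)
The plan is to expand $f$ in the character basis and use the multiplicativity of characters to decouple the defining equation. The key observation is that $\sum_{g}w(g)\chi(g+x)$ factorises as $\chi(x)\cdot\widehat{w}(\chi)$, where $\widehat{w}(\chi):=\sum_{g\in G}w(g)\chi(g)$ is (up to a sign in the exponent) the Fourier transform of $w$ at $\chi$; hence whether a particular character is annihilated by the convolution with $w$ is detected by a single scalar.

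First I would substitute the expansion \eqref{elin} of $f$ into the defining identity \eqref{epd2}. Interchanging the two finite sums and using $\chi(g+x)=\chi(g)\chi(x)$, the hypothesis $\sum_{g\in G}w(g)f(g+x)=0$ for all $x\in G$ rewrites as
\[
\sum_{\chi\in\tG}c_{\chi}\,\widehat{w}(\chi)\,\chi(x)=0\qquad \text{for every } x\in G.
\]
Next I would invoke the orthogonality (equivalently, linear independence) of the irreducible characters of $G$ in $L^{2}(G)$, which is recalled just before the proposition. This forces $c_{\chi}\widehat{w}(\chi)=0$ for each $\chi\in\tG$, and in particular $\widehat{w}(\chi)=0$ whenever $c_{\chi}\neq 0$.

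Finally, for any such $\chi$, I would verify the non-Pompeiu condition directly: for every $x\in G$,
\[
\sum_{g\in G}w(g)\chi(g+x)=\chi(x)\sum_{g\in G}w(g)\chi(g)=\chi(x)\,\widehat{w}(\chi)=0.
\]
Since $\chi$ is a character, $\chi(0)=1$, so $\chi\not\equiv 0$, and therefore $w$ is indeed a non-Pompeiu weight function with respect to $\chi$. I do not anticipate any real obstacle here: the entire argument is a one-line Fourier-analytic decoupling, and the only ingredient beyond the definition is the linear independence of characters. The statement should be viewed as a structural remark saying that, when searching for witnesses to the non-Pompeiu property of $w$, it suffices to test $w$ against the characters of $G$.
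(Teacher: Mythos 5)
Your argument is correct and is essentially identical to the paper's own proof: both expand $f$ via \eqref{elin}, use multiplicativity of characters to factor out $\chi(x)$, invoke linear independence of the irreducible representations to conclude $\sum_{g}w(g)\chi(g)=0$ whenever $c_{\chi}\neq 0$, and then multiply back by $\chi(x)$ to verify the non-Pompeiu condition. The only cosmetic difference is your explicit remark that $\chi\not\equiv 0$, which the paper leaves implicit.
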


\begin{proof}
 Let $w$ be a non-Pompeiu with respect to a function $f$, then
$\sum_{s \in G}w(s)f(s+x)=0 \textrm{ for every } x  \in G $. Using
\eqref{elin} we get
 $$0=\sum_{s \in S}w(s)\sum_{\chi \in \widetilde{G} } c_{\chi} \chi(s+x)=
 \sum_{\chi  \in \widetilde{G}} c_{\chi} \sum_{s \in S}w(s)\chi(s+x)=\sum_{\chi \in \widetilde{G}}  \big(c_{\chi}   \sum_{s \in S}w(s)\chi(s)\big)\chi(x),$$
 since $\chi$ is a character.
 This statement holds for every $x \in G$ so we can formulate it as follows:
 \[ \sum_{\chi \in \tG} \big(c_{\chi} \sum_{s \in S}w(s)\chi(s)\big) \chi=0.\]
 Since the irreducible representations are linearly independent over $\C$, the previous equation holds if and only if $\sum_{s \in S}w(s)\chi(s)=0$ for all $\chi$ such that $c_{\chi}\ne 0$.
Multiplying with $\chi(x)$ we obtain $\sum_{s \in
S}w(s)\chi(x+s)=0$. Since this holds for every $x \in G$, this means
that $w$ is a non-Pompeiu with respect to such $\chi$.
\end{proof}
We note that a stronger result was proved by Babai \cite{B1979} who
determined the spectrum of a Cayley graphs of abelian groups. The
set of the eigenvalues of $Cay(G,S)$ is $\{\sum_{s \in S}  \chi(s)
\mid \chi \in \tG \}$.

\begin{cor}
If $S$ is a non-Pompeiu set in a finite abelian group, then $S$ is
non-Pompeiu with respect to some irreducible representation of $G$.
\end{cor}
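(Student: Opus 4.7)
The corollary follows as a very short deduction from Proposition \ref{pirr} applied to the special weight function $w = \mathbf{1}_S$, the characteristic function of $S$. The plan is to unwind the definitions and invoke the proposition.

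First, I would recall that by the definition given just after the definition of Pompeiu weight functions, a subset $S \subseteq G$ is a Pompeiu set if and only if its characteristic function $\mathbf{1}_S$ is a Pompeiu weight function. Therefore, the hypothesis that $S$ is non-Pompeiu means that $w = \mathbf{1}_S$ is a non-Pompeiu weight function: there exists some function $f : G \to \mathbb{C}$ with $f \not\equiv 0$ such that $\sum_{g \in G} w(g) f(g+x) = \sum_{s \in S} f(s+x) = 0$ for every $x \in G$.

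Next, I would expand $f$ in the orthogonal basis of irreducible representations (characters) of $G$ as in \eqref{elin}, writing $f(x) = \sum_{\chi \in \widetilde{G}} c_\chi \chi(x)$. Since $f \not\equiv 0$ and the characters form a basis, at least one coefficient $c_\chi$ is nonzero.

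Finally, I would apply Proposition \ref{pirr} to the weight function $w = \mathbf{1}_S$ together with this $f$: the proposition guarantees that $w$ is a non-Pompeiu weight function with respect to every irreducible representation $\chi$ for which $c_\chi \neq 0$. Picking any such $\chi$ yields an irreducible representation with respect to which $S$ (equivalently, $\mathbf{1}_S$) is non-Pompeiu, which is exactly the conclusion sought. There is no real obstacle here beyond keeping the equivalence between ``Pompeiu set'' and ``Pompeiu characteristic weight function'' explicit; the substantive work has already been done in Proposition \ref{pirr}.
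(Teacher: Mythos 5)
Your proof is correct and is exactly the deduction the paper intends: the corollary is stated as an immediate consequence of Proposition \ref{pirr} applied to $w=\mathbf{1}_S$, using the equivalence between ``$S$ is Pompeiu'' and ``$\mathbf{1}_S$ is a Pompeiu weight function'' and the fact that $f\not\equiv0$ forces some $c_\chi\neq0$. Nothing further is needed.
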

\begin{rem}
Since the characters (irreducible representations) play the role of exponential functions over the abelian group $G$, it seems reasonable that the function $\sin(ax)$ can provide an example on the disk for the original Pompieu  problem. On the other hand, 
\out{it is surprising that exponential solutions were not found in
literature.}
\end{rem}

\subsection{Connection of the problems}

\begin{prop}
Let $G$ be a finite abelian group. If $S \subset G$ is a spectral
set with $|S|\ge 2$, then $S$ is a non-Pompeiu set.

\end{prop}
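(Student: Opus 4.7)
The plan is to exploit the spectral condition directly: orthogonality of two distinct exponentials in the spectrum $\Lambda$ produces a nontrivial character that vanishes on $S$, and any such character is an explicit non-Pompeiu witness.

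First I would use $|S| \ge 2$ together with the identity $|\Lambda| = |S|$ (from the remark after Definition \ref{dede}, or directly from spectrality) to pick two distinct $\lambda, \lambda' \in \Lambda$. The orthogonality condition built into the definition of a spectral set (condition~2 of Definition \ref{dede}, stated there for $\Z_N$ but valid verbatim for any finite abelian group) gives
\[
\sum_{s \in S} \chi_\lambda(s)\,\overline{\chi_{\lambda'}(s)} = 0.
\]
Setting $\chi := \chi_\lambda \cdot \chi_{\lambda'}^{-1}$, this reads $\sum_{s\in S}\chi(s)=0$, and $\chi$ is a nontrivial irreducible representation of $G$ because $\lambda \ne \lambda'$.

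Next I would propose $f := \chi$ as a witness function showing that $S$ is non-Pompeiu. Using the multiplicative property of characters,
\[
\sum_{s \in S} f(s+x) \;=\; \sum_{s \in S}\chi(s)\chi(x) \;=\; \chi(x)\sum_{s\in S}\chi(s) \;=\; 0
\]
for every $x \in G$. Since $\chi$ is a nontrivial character it is not identically zero, so $\mathbf{1}_S$ (the characteristic function of $S$, viewed as a weight) is a non-Pompeiu weight function with respect to $f$, which is exactly the statement that $S$ is a non-Pompeiu set.

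There is really no obstacle here; the proof is almost a tautology once one unpacks both definitions. The only thing to be slightly careful about is to note that the orthogonality condition in Definition \ref{dede} generalizes from $\Z_N$ to an arbitrary finite abelian group $G$ (with $\chi_\lambda$ the corresponding character of $G$), and to observe that $\chi_\lambda \chi_{\lambda'}^{-1}$ is itself an element of $\widetilde G$, hence a legitimate function $f:G\to\C$ to feed into \eqref{epd1}. In particular, Proposition \ref{pirr} is not even needed for this direction, since the witness is explicitly a single character.
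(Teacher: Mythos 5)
Your proof is correct and follows essentially the same route as the paper: both take two distinct spectrum elements, form the quotient character $\phi=\chi_\lambda\overline{\chi_{\lambda'}}$, observe that orthogonality forces $\sum_{s\in S}\phi(s)=0$, and use $\phi$ itself as the non-Pompeiu witness. You merely spell out the final translation-invariance check $\sum_{s\in S}\phi(s+x)=\phi(x)\sum_{s\in S}\phi(s)=0$ a little more explicitly than the paper does.
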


\begin{proof}
The spectral property of $S$ requires a set of irreducible
representations, of the same cardinality of $S$, whose
\out{restrictions} to $S$ are pairwise orthogonal. Assume $\chi$ and
$\psi$ are different irreducible representations of $G$, whose
restriction to $S$ are orthogonal. Since $[\chi_{|S},
\psi_{|S}]=[(\chi \bar{\psi})_{|S},id_{|S} ]$ we obtain a
representation $\phi=\chi \bar{\psi}$ such that $\sum_{s \in S}
\phi(s)=0$, which leads us back to the Pompeiu problem. Thus we get
that $S$ is a non-Pompeiu set with respect to the irreducible
representation $\phi$.
\end{proof}

\section{Pompeiu problem for cyclic groups}\label{sectionPompfor abelian}
In this section we consider the non-Pompeiu sets for abelian groups.

Every representation of a finite abelian group is linear, so it
factorizes through a faithful representation of a cyclic group since
the finite subgroups of $\mathbb{C}\setminus \{0\}$ are cyclic. This
shows that some sort of description for non-Pompeiu sets of finite
abelian groups is given by understanding the non-Pompeiu weight
functions of cyclic groups with respect to faithful representations.

Let $(\Z_N,+)$ be the cyclic group of order $N$.
Note that for all $k\mid N$ there is a unique normal subgroup
$\Z_k\leq \Z_N$ of order $k$.
The group generated this way contains exactly the elements of $\Z_N$
divisible by $\frac{N}{k}$ so this subgroup of $(\Z_N,+)$ will also
be denoted by
$H_{\frac{N}{k}}$. 

We use the following isomorphism between $\mathbb{Z}_N$ and
$\widetilde{\Z}_N$: fix a primitive $N$'th root of unity $\alpha$, a
generator $g$ of $\mathbb{Z}_N$. Then for any $j \in \mathbb{Z}_N$
the function $\psi_j(g^{i})=\alpha^{ji}$ gives a homomorphism from
$\mathbb{Z}_N$ to $\mathbb{C}^*$ hence it is an irreducible
representation. Now $j \to \psi_j$ gives the isomorphism from
$\mathbb{Z}_N$ to $\widetilde{\Z}_N$; \out{throughout the text, we
will use the isomorphism that arises from $\alpha=\xi_N$}. From now
on the subgroup of $\widetilde{\Z}_N$ isomorphic to $H \le
\mathbb{Z}_N $
will be denoted by $\widetilde{H}$. 

Hereinafter we use the notion of mask polynomial.
\begin{de}\label{defmask}
Let $G$ be a cyclic group and $w:G\to \Q$ be a weight function. We
call $$m_{w}(x)=\sum_{h \in G} w(h)x^h$$ the \textit{mask
polynomial} of $w$, where $w(h)$ denotes the weight of $h\in G$.
This might be considered as an element of $\Q[x]/(x^n-1)$. For a
(multi)set $S$ of $G$ we define the \textit{mask polynomial} of $S$
by $$S(x)=\sum_{s \in S} c_sx^s,$$ where $c_s$ denotes the
cardinality of $s\in S$.

\end{de}

Let $\Phi_{k}(x)$ denote the $k$'th cyclotomic polynomial, which is of degree $\varphi(k)$, where $\varphi$ denotes the Euler totient function. Note that for \out{fixed} $N$ and prime $p\mid N$ the mask polynomial of 
$\Z_p\leq \Z_N$ is $\Php$. The following is one of the key
preliminary observations. Basically, this can be considered as a
statement on roots of unity. There is a \out{vast} literature on
vanishing sums of roots of unity. This particular statement gives a
generalization of Theorem 3.3 of \cite{vanishingsum}. Similar
results might appear in many other papers.

\begin{prop}\label{propsqfree}
Let $G$ be a cyclic group of order $N$ and let $\alpha$ be a
primitive $N$'th root of unity.  We denote by $P_N$ the set of prime
divisors of $N$. Further let $w$ be a weighted function. Then $w$ is
non-Pompeiu with respect to the faithful representation
$\psi_{\alpha}$ if and only if $$w=\sum_{g \in G} \sum_{p \in P_N}
w_{p,g} 1_{\mathbb{Z}_p+g}$$ for some $w_{p,g} \in \mathbb{Q}$,
where $1_{\mathbb{Z}_p+g}$ denotes the characteristic function of
the coset $\mathbb{Z}_p+g$.
\end{prop}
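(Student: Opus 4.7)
The plan is to first translate the non-Pompeiu condition into a single polynomial equation. Since $\psi_\alpha$ is a homomorphism, $\psi_\alpha(s+x) = \alpha^s \psi_\alpha(x)$, so $\sum_{s \in G} w(s)\psi_\alpha(s+x) = 0$ for all $x \in G$ collapses (after dividing by $\psi_\alpha(x)$) to the single equation $m_w(\alpha) = 0$. Moreover, the coset $\Z_p + g$ has mask polynomial
\begin{equation*}
x^g\bigl(1+x^{N/p}+\cdots+x^{(p-1)N/p}\bigr) \;=\; x^g \Phi_p(x^{N/p})
\end{equation*}
in $R := \Q[x]/(x^N - 1)$. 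Hence the proposition reduces to the purely algebraic identity $\ker(\mathrm{ev}_\alpha) = V$ in $R$, where $V$ is the $\Q$-span of $\{x^g \Phi_p(x^{N/p}) : p \in P_N,\ g \in \Z_N\}$ and $\mathrm{ev}_\alpha : R \to \C$ is evaluation at $\alpha$.

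The inclusion $V \subseteq \ker(\mathrm{ev}_\alpha)$ is immediate from $\Phi_p(x^{N/p}) = (x^N-1)/(x^{N/p}-1)$: since $\alpha^N = 1$ but $\alpha^{N/p} \neq 1$, every generator of $V$ vanishes at $\alpha$. For the converse I would pass to the Chinese Remainder Theorem decomposition
\begin{equation*}
R \;\cong\; \bigoplus_{d \mid N} \Q(\zeta_d), \qquad m \mapsto \bigl(m(\zeta_d)\bigr)_{d \mid N}.
\end{equation*}
Multiplication by $x$ preserves this splitting, so the $\Q$-span of $\{x^g \Phi_p(x^{N/p}) : g \in \Z_N\}$ coincides with the ideal $I_p$ of $R$ generated by $\Phi_p(x^{N/p})$. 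A direct calculation shows $\Phi_p(\zeta_d^{N/p}) = 0$ iff $d \nmid N/p$, equivalently iff $v_p(d) = v_p(N)$, where $v_p$ denotes the $p$-adic valuation. Since each $\Q(\zeta_d)$ is a field, $I_p$ equals the sum of those summands on which $\Phi_p(x^{N/p})$ is nonzero, i.e.\ $I_p = \bigoplus_{d \mid N,\, v_p(d) < v_p(N)} \Q(\zeta_d)$.

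Summing these pieces over $p \in P_N$ then gives
\begin{equation*}
V \;=\; \sum_{p \in P_N} I_p \;=\; \bigoplus_{d \mid N,\ d \ne N} \Q(\zeta_d),
\end{equation*}
using the fact that $d = N$ is the unique divisor of $N$ with $v_p(d) = v_p(N)$ for every $p \in P_N$. Under the CRT isomorphism the right-hand side is exactly $\ker(\mathrm{ev}_\alpha)$, which completes the argument. The only computational step is identifying the vanishing locus of $\Phi_p(x^{N/p})$ in the CRT decomposition; everything else is formal bookkeeping in the semisimple algebra $R$. This route avoids the heavier structure theory of vanishing sums of roots of unity from \cite{vanishingsum} by extracting the claim directly from the semisimplicity of the rational group algebra.
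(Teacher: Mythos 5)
Your proof is correct, and it takes a genuinely different route from the paper's. Both arguments begin identically, reducing the non-Pompeiu condition to the single equation $m_w(\alpha)=0$ and noting that each coset mask polynomial $x^g\Phi_p(x^{N/p})$ vanishes at $\alpha$; the difference lies in the converse. The paper stays at the level of explicit polynomial arithmetic: it observes that the common roots of the $\Phi_p(x^{N/p})$, $p\in P_N$, are precisely the primitive $N$'th roots of unity, each simple, so that their gcd is $\Phi_N(x)$; a B\'ezout identity $\Phi_N(x)=\sum_p a_p(x)\Phi_p(x^{N/p})$ combined with the Euclidean division $m_w=q\Phi_N+r$ and the fact that $\Phi_N$ is the minimal polynomial of $\alpha$ forces $r=0$ and exhibits $m_w$ in the span of the coset mask polynomials. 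You instead pass to the semisimple decomposition $\Q[x]/(x^N-1)\cong\bigoplus_{d\mid N}\Q(\zeta_d)$, identify the span $V$ as the sum of the ideals $I_p$, and compute the support of each $I_p$ from the vanishing locus of $\Phi_p(\zeta_d^{N/p})$ (your criterion $v_p(d)=v_p(N)$ is exactly the paper's root computation read off componentwise), concluding $V=\bigoplus_{d\neq N}\Q(\zeta_d)=\ker(\mathrm{ev}_\alpha)$. The two proofs rest on the same elementary computation, but your packaging is more structural and yields slightly more information -- an exact identification of the span of all coset mask polynomials, rather than merely the containment of $\ker(\mathrm{ev}_\alpha)$ in it -- at the cost of invoking the CRT decomposition of the group algebra, which the paper's hands-on B\'ezout-plus-division argument avoids.
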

\begin{proof}
The fact that $w$ is a non-Pompeiu weight function with respect to
the faithful representation $\psi_{\alpha}$ means that $\alpha$ is
the root of the mask polynomial $m_w$ of $w$, since
$m_w(\alpha)=\sum_{i=0}^{N-1} w(i)\alpha^{i}=0$. On the other hand,
for a given $N \in \mathbb{N}$ every $p \in P_N$ we have that
$\alpha$ is the root of the mask polynomial of
$\mathbb{Z}_p \le \mathbb{Z}_N$, that is $\Php$. Indeed, $\alpha$ is
a primitive $N$'th root of unity so $\alpha^{\frac{N}{p}} \ne 1$.
Clearly,
$\alpha^{\frac{N}{p}}\Phi_p(\alpha^{N/p})=\Phi_p(\alpha^{N/p})$, so
it implies $\Phi_p(\alpha^{N/p})=0$.

Then $\alpha$ is also a root of the polynomial $m_w(x)+\sum_{p \in
P_N} a_p(x) \Php$, where $a_p(x) \in \mathbb{Q}[x]$. By using
Euclidean division there are polynomial $q(x),r(x) \in
\mathbb{Q}[x]$ such that $$m_w(x)=q(x) \Phi_N(x)+r(x)$$ with either
$r(x)$ to be the constant zero function or $deg(r(x)) < \varphi(N)$.

The common roots of the polynomials $\Php \ (p\in P_N)$ are exactly
the primitive $N$'th roots of unity. The multiplicity of these roots
in all of these polynomials is $1$. These polynomials are all in
$\Q[x]$ so the greatest common divisor in the ring $\mathbb{Q}[x]$
of the polynomials $\Php \ (p\in P_N)$ is $\Phi_N(x)$. Thus
$$\Phi_N(x)=\sum_{p \in P_N} a_p(x) \Php$$ (with some $a_p(x) \in
\mathbb{Q}[x]$). Substituting this to the previous equation we
obtain that $m_w(x)-\sum_{p \in P_N} q(x) a_p(x) \Php$ is of degree
less than $\varphi(N)$ or $m_w(x)-\sum_{p \in P_N} q(x) a_p(x) \Php$
is the constant zero function. Since $\Phi_N(x)$ is the minimal
polynomial of $\alpha$ over $\mathbb{Q}$, we have $m_w(x)-\sum_{p
\in P_N} q(x) a_p(x) \Php=0$. Thus $$m_w(x)=\sum_{p \in P_N} q(x)
a_p(x) \Php.$$ It is clear that $x^k\Php$ is the mask polynomial of
a coset of $\mathbb{Z}_p$ for every $0\le k<N$. Hence we have
$$w(x)=\sum_{g \in G} \sum_{p \in P_N} w_{p,g}
1_{\mathbb{Z}_p+g}(x)$$ with some $w_{p,g} \in \mathbb{Q}$.

The other direction follows from the fact that
$\Phi_p(\alpha^{N/p})=0$ for every $p \in P_N$.
\end{proof}
We note that using Proposition \ref{propsqfree} one can simply
construct the asymmetric minimal sums of roots of unity appearing
in \cite{vanishingsum}.

In terms of mask polynomials the previous proposition can be stated
as follows.
\begin{cor}\label{corpqfree}
Let $S(x)\in\Z_{\geq0}[x]$ with $S(\xi_N)=0$, where
$N=p_1^{m_1}\cdots p_n^{m_n}$ and $p_1,\dots ,p_n$ are primes. Then,
 \[S(x)\equiv P_1(x)\Phi_{p_1}(x^{N/p_1})+\ldots+ P_n(x)\Phi_{p_n}(x^{N/p_n})\bmod(x^N-1),\]
 for some $P_1(x),\dots, P_n(x)\in\Q[x]$. \end{cor}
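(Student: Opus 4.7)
The plan is to deduce this corollary as a direct polynomial reformulation of Proposition \ref{propsqfree}. First I would reduce $S(x)$ modulo $x^N-1$ to obtain a polynomial $\tilde S(x)=\sum_{i=0}^{N-1}c_i x^i$ with $c_i\in\Z_{\geq0}$; the reduction preserves the value at $\xi_N$ since $\xi_N^N=1$, so $\tilde S(\xi_N)=0$ as well.

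Next I would interpret $\tilde S(x)$ as the mask polynomial of the weight function $w:\Z_N\to\Q$ defined by $w(i)=c_i$, so that the identity $\sum_{i=0}^{N-1}w(i)\xi_N^i=0$ is precisely the statement that $w$ is non-Pompeiu with respect to the faithful representation $\psi_{\xi_N}$. Applying Proposition \ref{propsqfree} to $w$ then produces a decomposition $w=\sum_{g\in\Z_N}\sum_{p\in P_N}w_{p,g}\,1_{\Z_p+g}$ with $w_{p,g}\in\Q$.

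Finally I would translate this decomposition back to the language of mask polynomials. The mask polynomial of the coset indicator $1_{\Z_p+g}$ equals $x^g\Phi_p(x^{N/p})$ modulo $x^N-1$, because $\Phi_p(x^{N/p})=\sum_{j=0}^{p-1}x^{jN/p}$ is exactly the mask polynomial of the subgroup $H_{N/p}=\Z_p$. Grouping terms by prime and setting $P_i(x)=\sum_{g\in\Z_N}w_{p_i,g}\,x^g\in\Q[x]$ yields
\[\tilde S(x)\equiv\sum_{i=1}^{n}P_i(x)\,\Phi_{p_i}(x^{N/p_i})\bmod(x^N-1),\]
and since $S\equiv\tilde S\bmod(x^N-1)$ this is the claim. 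Since the corollary is a clean rephrasing of Proposition \ref{propsqfree}, the only point that requires any care is the verification of the coset mask polynomial identity above, which is routine.
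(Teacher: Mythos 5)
Your proposal is correct and follows essentially the same route as the paper, which presents Corollary \ref{corpqfree} as a direct restatement of Proposition \ref{propsqfree} in the language of mask polynomials (indeed, the paper itself observes that $x^k\Phi_p(x^{N/p})$ is the mask polynomial of a coset of $\Z_p$ inside the proof of that proposition). Your translation back and forth between weight functions and polynomials modulo $x^N-1$ is exactly the intended argument, and the coset mask polynomial identity you single out is the only detail that needs checking.
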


The following is an easy consequence of Proposition
\ref{propsqfree}.

\begin{cor}\label{corradn}
Let $G$ be a cyclic group of order $N$ and 
$\Psi$ be a faithful representation of $G$. Assume $w$ is a
non-Pompeiu weight function with respect to $\Psi$. Then the
restriction of $w$ to each $\Z_{\rad(N)}$-coset is the weighted sum
of characteristic functions of $\Z_{p_i}$-cosets, where $\rad(N)$
denotes the square free radical of $N$.
\end{cor}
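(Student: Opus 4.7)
The plan is to read off the statement directly from Proposition~\ref{propsqfree}. Write $N = p_1^{m_1}\cdots p_n^{m_n}$, so that $\rad(N)=p_1\cdots p_n$ and $P_N=\{p_1,\ldots,p_n\}$. The proposition supplies a decomposition
\[
w \;=\; \sum_{g \in G}\sum_{p \in P_N} w_{p,g}\, 1_{\Z_p+g}, \qquad w_{p,g}\in\Q.
\]
The first observation I would make is the crucial containment: since every prime divisor $p_i$ of $N$ also divides $\rad(N)$, the unique order-$p_i$ subgroup $\Z_{p_i}$ of $\Z_N$ sits inside the unique order-$\rad(N)$ subgroup $\Z_{\rad(N)}$. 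Consequently each coset $\Z_{p_i}+g$ lies entirely inside a single coset of $\Z_{\rad(N)}$, and never straddles two of them.

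Next, I would fix a coset $C=\Z_{\rad(N)}+h$ of $\Z_{\rad(N)}$ in $\Z_N$ and restrict both sides of the above decomposition to $C$. Every term $w_{p,g}\,1_{\Z_p+g}$ whose support is disjoint from $C$ vanishes upon restriction; by the containment just noted, the only terms that survive are those for which $\Z_p+g \subseteq C$. Each such surviving term is, by construction, a rational multiple of the characteristic function of some $\Z_{p_i}$-coset lying inside $C$. Collecting them gives exactly the claimed expression
\[
w|_C \;=\; \sum_{\substack{p\in P_N,\ g\in G \\ \Z_p+g\subseteq C}} w_{p,g}\, 1_{\Z_p+g},
\]
which is the stated weighted sum of characteristic functions of $\Z_{p_i}$-cosets.

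There is no real obstacle here: the corollary is a cosmetic reformulation of Proposition~\ref{propsqfree} organized coset-by-coset, and the only ingredient beyond the proposition itself is the trivial inclusion $\Z_{p_i}\leq \Z_{\rad(N)}$. The point of isolating this formulation is presumably to facilitate later arguments that analyze non-Pompeiu weight functions locally on each $\Z_{\rad(N)}$-coset, where the structure above makes such local analysis combinatorially tractable.
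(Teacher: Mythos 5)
Your proof is correct and is exactly the argument the paper intends: the corollary is stated as an immediate consequence of Proposition~\ref{propsqfree}, and your observation that each $\Z_{p_i}$-coset lies inside a single $\Z_{\rad(N)}$-coset (since $\Z_{p_i}\leq\Z_{\rad(N)}$) is the only ingredient needed to localize the global decomposition. Nothing is missing.
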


We will consider $\mathbb{Z}_{\prod_{i=1}^d p_i}\cong
\prod_{i=1}^d\Z_{p_i}$ as a grid in $\mathbb{R}^d$, whose points
have integer coordinates. More precisely for
$\mathbb{Z}_{\prod_{i=1}^{d} p_i}$ we assign
\[ \mathcal{G}=\{ x \in \Z^d \mid 0 \le x_i \le p_i-1 ~~ \mbox{for } 1 \le i \le d\},
\]
where $x_i$ denotes the $i$'th coordinate of $x$. The cosets of
$\mathbb{Z}_{p_i}$ \out{coincide with collections} of parallel line
segments (containing $p_i$ grid points of $\mathcal{G}$. A $d$
dimensional grid-cuboid will be a collection of $2^d$ grid
points, whose convex hull forms a $d$-dimensional cuboid 
in $\R^d$. Let $P \subset \mathcal{G}$ be a $d$-dimensional
grid-cuboid 
and fix a point $y \in P$. For a point of $z \in P$ let $\pi(z)$
denote the Hamming distance between $z$ and $y$. Note that $w$ can
also be considered as a function from $\mathcal{G}$ to $\Q$.

The following statement makes the Pompeiu property for weight
functions easily recognizable.
\begin{prop}\label{prophypercube}
Let $w$ be a non-Pompeiu weight function on the set of
$\mathbb{Z}_{\prod_{i=1}^d p_i}$, where $p_i$ are mutually different
primes. If $w$ is the weighted sum of characteristic functions of
$\Z_{p_i}$-cosets, then 
for every $d$ dimensional
grid-cuboid 
$P$ we have
\begin{equation}\label{eqparallelograme}
\sum_{c \in P} (-1)^{\pi(c)}w(c)=0.
\end{equation}
\end{prop}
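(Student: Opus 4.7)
The plan is to reduce the statement to a single coset by linearity and then prove cancellation directly from the combinatorial geometry of the cuboid. Since the hypothesis says $w$ is a $\Q$-linear combination of characteristic functions $1_{\Z_{p_i}+g}$, and the alternating sum $\sum_{c\in P}(-1)^{\pi(c)}w(c)$ is linear in $w$, it suffices to verify
\[
\sum_{c\in P}(-1)^{\pi(c)}1_{\Z_{p_i}+g}(c) \;=\; 0
\]
for each fixed prime $p_i$ and each coset representative $g$. So I would fix such an $i$ and such a coset $C=\Z_{p_i}+g$.

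Next, I would translate everything into the grid picture under the identification $\Z_N\cong\prod_j\Z_{p_j}=\mathcal{G}$. In these coordinates the coset $C$ is a line parallel to the $i$-th axis: it consists of all points $x\in\mathcal{G}$ with $x_j=g_j$ for every $j\ne i$, while $x_i$ ranges over $\Z_{p_i}$. A grid-cuboid $P$ is determined by a choice of two distinct values $\{a_j,b_j\}$ in each coordinate direction $j$, giving $2^d$ vertices. So the intersection $C\cap P$ consists of those $x\in P$ with $x_j=g_j$ for all $j\ne i$.

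The key observation, and the crux of the proof, is a dichotomy for $C\cap P$. If there exists some $j\ne i$ with $g_j\notin\{a_j,b_j\}$, then $C\cap P=\varnothing$ and there is nothing to sum. Otherwise $g_j\in\{a_j,b_j\}$ for every $j\ne i$, and the two coordinates $x_i\in\{a_i,b_i\}$ give exactly two points of $C\cap P$. These two points agree in every coordinate except the $i$-th, so their Hamming distances to the reference vertex $y\in P$ differ by exactly one; consequently their contributions $(-1)^{\pi(c)}$ have opposite sign and cancel.

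I expect no serious obstacle: the argument reduces to the trivial but crucial combinatorial fact that a line parallel to a coordinate axis meets a $d$-cuboid in either $0$ or $2$ vertices, and in the latter case the two vertices sit at consecutive levels of the Hamming grading. The only point requiring a little care is bookkeeping: being explicit that $\pi$ is well-defined on all of $\mathcal{G}\cap P$ (Hamming distance from $y$), and that the decomposition of $w$ as a $\Q$-combination of coset indicators supplied by Proposition~\ref{propsqfree} (via Corollary~\ref{corradn}, since $N=\prod p_i$ is squarefree) is exactly what makes the reduction to a single coset legitimate.
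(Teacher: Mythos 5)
Your proposal is correct and follows essentially the same route as the paper's proof: reduce by linearity to a single coset indicator, then use the fact that a $\Z_{p_i}$-coset meets the grid-cuboid in $0$ or $2$ vertices which differ only in the $i$-th coordinate and hence carry opposite signs $(-1)^{\pi(c)}$. You merely spell out the sign cancellation in more detail than the paper does.
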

\begin{proof}
It is easy to see that each coset of $\Z_{p_i}$ for any $p_i\mid n$
contains either 2 or 0 elements of the cuboid $P$. Substituting the
characteristic function of any coset of $\Z_{p_i}$ as a weight
function to the left-hand side of \eqref{eqparallelograme}, it is
clearly reduced to a sum of at most two elements with different
sign, thus \eqref{eqparallelograme} holds.
\end{proof}
\begin{rem}
The converse of the previous statement also holds.  We leave it to the reader to work out the details of the proof . 
\end{rem}

Now we describe a few special cases which will be later used for the
proof of Theorem \ref{thmfotetel}. In the proof of the next
proposition we use the following definition.
\begin{de}
Let $S\ssq \Z_N$. For every $j\in\Z$ and $d\mid N$, we define the
following subsets
 \[S_{j\bmod d}=\set{s\in S:s\equiv j\bmod d}.\]
\end{de}


\begin{prop}\label{proppq}
\hspace{2em}
\begin{enumerate}
\item\label{itempqa} Every non-Pompeiu set in $\mathbb{Z}_{pq}$ with respect to a faithful representation is either the union of
cosets of $\mathbb{Z}_p$ or those of $\mathbb{Z}_q$.
\item\label{itempqb} Let $N=p^mq^n$ and let $S$ be a non-Pompeiu multiset in $\Z_{N}$ with respect to a faithful representation. Then there are some polynomials $P(x), Q(x)\in \Z_{\ge 0}[x]$ such that
 \[S(x)\equiv P(x)\Php+Q(x)\Phq\bmod(x^N-1).\]
\end{enumerate}
\end{prop}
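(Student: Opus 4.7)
My plan is to derive both parts from Propositions \ref{propsqfree} and \ref{prophypercube} together with Corollary \ref{corradn}, exploiting that being non-Pompeiu with respect to a faithful representation is equivalent to the mask polynomial vanishing at a primitive $N$th root of unity.

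For part \ref{itempqa}, I would identify $\Z_{pq} \cong \Z_p \times \Z_q$ via CRT so that $S$ becomes a $\{0,1\}$-valued function on the $p \times q$ grid. By Proposition \ref{propsqfree} the indicator $1_S$ is a rational combination of $\Z_p$- and $\Z_q$-coset indicators, so Proposition \ref{prophypercube} with $d=2$ yields the parallelogram identity $1_S(i,j) + 1_S(i',j') = 1_S(i,j') + 1_S(i',j)$ for all $i, i' \in \Z_p$ and $j, j' \in \Z_q$. Because the values are $0$ or $1$, this immediately forces $S$ to be a combinatorial rectangle $S = I \times J$. Evaluating the faithful character as $\chi_1 \otimes \chi_2$ where $\chi_1, \chi_2$ are primitive characters of $\Z_p, \Z_q$, the vanishing sum factors as $\bigl(\sum_{i \in I} \chi_1(i)\bigr)\bigl(\sum_{j \in J} \chi_2(j)\bigr) = 0$. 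A vanishing sum of $p$th (resp.\ $q$th) roots of unity with $\{0,1\}$ coefficients must involve every root (by the prime case of the Lam--Leung theorem, or directly from the irreducibility of $\Phi_p$), so $I = \Z_p$ or $J = \Z_q$, and $S$ is a union of $\Z_p$-cosets or of $\Z_q$-cosets.

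For part \ref{itempqb}, since $\rad(p^m q^n) = pq$, Corollary \ref{corradn} says that on each coset $C$ of the order-$pq$ subgroup of $\Z_N$, the restriction $S|_C$ is a rational combination of $\Z_p$- and $\Z_q$-coset indicators lying in $C$. Identifying $C$ with $\Z_p \times \Z_q$, Proposition \ref{prophypercube} gives the parallelogram identity for $S|_C$, producing functions $f_C, g_C$ with $S|_C(i,j) = f_C(i) + g_C(j)$. To promote this to a decomposition with nonnegative integer coefficients, I anchor at a point $(i_0, j_0) \in C$ where $S|_C$ attains its minimum value $m$ and set $f_C(i) = S|_C(i, j_0)$ and $g_C(j) = S|_C(i_0, j) - m$; both are nonnegative integers by the choice of $(i_0, j_0)$, and the parallelogram identity gives $f_C(i) + g_C(j) = S|_C(i, j_0) + S|_C(i_0, j) - m = S|_C(i,j)$.

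Summing these per-coset decompositions over all $pq$-cosets $C$ of $\Z_N$, and noting that every $\Z_p$-coset and every $\Z_q$-coset of $\Z_N$ lies in a unique such $C$, one obtains a global nonnegative integer decomposition of $S$ as a sum of $\Z_p$- and $\Z_q$-cosets. Passing to mask polynomials this is precisely $S(x) \equiv P(x)\Phi_p(x^{N/p}) + Q(x)\Phi_q(x^{N/q}) \pmod{x^N - 1}$ with $P, Q \in \Z_{\geq 0}[x]$. I expect the main obstacle to be precisely this nonnegative-integer normalization: the $\Q$-decomposition supplied by Proposition \ref{prophypercube} is only unique up to trading an additive constant between $f_C$ and $g_C$, and anchoring at a global minimum of $S|_C$ is the critical choice that makes both summands simultaneously nonnegative integers.
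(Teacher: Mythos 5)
Your proposal is correct and follows essentially the same route as the paper: both parts rest on the rational decomposition supplied by Proposition \ref{propsqfree} (equivalently, the additive structure $w(i,j)=a_i+b_j$ on the $p\times q$ grid), and your normalization in part \ref{itempqb} --- anchoring at a point where $S|_C$ attains its minimum $m$ --- is exactly the paper's choice of the minimal weight $e=c_a+d_b$ followed by the same redistribution of constants. The only difference is cosmetic: in part \ref{itempqa} you finish via the rectangle structure and the factorization of the character sum into a product over $\Z_p$ and $\Z_q$, whereas the paper argues directly that the coefficients take at most two values each and hence all $b_j$ (or all $a_i$) must coincide.
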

\begin{proof}
\begin{enumerate}
\item
Let $S$ be a non-Pompeiu set in $\Z_{pq}$ with respect to a faithful
representation and let $w$ be the characteristic function of $S$.
Using Proposition \ref{propsqfree} we might write
$w=\sum_{i=0}^{q-1}a_i 1_{\Z_p+i}+\sum_{j=0}^{p-1}b_j 1_{\Z_q+j}$,
where $a_i, b_j \in \mathbb{Q}$. Then the range of $w$ is $Ran(w)=
\{ a_i+b_j \mid ~ 0 \le i \le p-1,~ 0 \le j \le q-1 \}$. We have
that $Ran(w)= \{0,1 \}$. Thus there are at most two different $a_i$
and two different $b_j$.

One can treat the case when $a_i$ and $b_j$ are constants as a function of $i$ and $j$, respectively. 
Thus we may assume that $a_k < a_l$ for some $0 \le k,l \le p-1$.
Then clearly $a_k+b_j=0$ and $a_l+b_j=1$ for all $b_j$, in
particular all $b_j$'s are the same. Therefore, we may write
\[ w= b+ \sum_{i=0}^{p-1}a_i 1_{\Z_p+i}= \sum_{i=0}^{p-1}(b+a_i) 1_{\Z_p+i},\]
finishing the proof of the statement.

\item By Corollary \ref{corpqfree}, it is clear that \[S(x)\equiv P(x)\Php+Q(x)\Phq\bmod(x^N-1)\]  for some $P(x), Q(x)\in \Q[x]$. Now we show that $P$ and $Q$ can be chosen such that $P(X), Q(x)\in \Z_{\ge 0}[x]$.

The subgroups $\Z_p$ and $\Z_q$ generate $\mathbb{Z}_{pq}$. Thus $S$
can be written as the disjoint union $$S= \cup_{k \in C} S_{k \bmod
N/pq}$$ for $k=0\stb N/pq-1$, where $k$ runs through a set of
representatives $C$ of the cosets of $\Z_{pq}$. Thus we are given
the following: $$S_{k \bmod N/pq}=\sum_{a \in A} c_a
(\Z_p+a)+\sum_{b \in B} d_b (\Z_q+b),$$ where $c_a+ d_b \in \Z_{\ge
0}$ and $A$ and $B$ are sets of coset representatives of $\Z_p$ and
$\Z_q$, respectively, in $\Z_{pq}+k$. We want to modify the
coefficients $c_a, d_b$ so that they produce the same multiset but
all of them are nonnegative.

Let $e=c_a+d_b$ be one of the minimal weights of the multiset $S$.
Then the values $d_x'=(c_a+d_x)-e$ are nonnegative for every $x \in
B$ and let $c_y'=c_y+d_b$, which are nonnegative since these values
are given by the multiset $S$ only.

Now $c_y'+d_x'=( (c_a+d_x) -e )+c_y+d_b=c_y+d_x$ for every $x \in B$
and $y \in A$, finishing the proof of the lemma. \out{\qedhere}


\end{enumerate}
\end{proof}

\section{Reduction (of Fuglede's conjecture)}\label{section5}

\






Before we proceed to the proof of Theorem \ref{thmfotetel} we make a
few general observations.
\begin{lem}\label{lemeltol}
Let $G$ be a finite abelian group. Assume that $S \subset G$ is a
spectral set having $\Lambda$ as a spectrum.
\begin{enumerate}
\item\label{itemeltols} $S+t$ is spectral with the same spectrum $\Lambda$ for every $t \in G$.
\item\label{itemeltolomega} $\Lambda+ \omega$ is a spectrum for $S$ for every $\omega \in G$.
\item\label{itemduality} $S$ is a spectrum for $\La$.
\end{enumerate}
\end{lem}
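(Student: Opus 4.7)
The three parts rely on the same basic fact: characters of an abelian group are multiplicative, so inner products of the form $\sum_{s\in S}\chi(s)\overline{\psi(s)}$ reduce to sums of the single character $\chi\overline{\psi}$, which depends only on the \emph{difference} in $\tG$.

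For (a), I would translate the spectral condition directly. Writing the orthogonality test for $S+t$ with respect to $\lambda,\lambda'\in\Lambda$, one computes
\[
\sum_{s\in S}\chi_{\lambda}(s+t)\overline{\chi_{\lambda'}(s+t)}
=\chi_{\lambda-\lambda'}(t)\sum_{s\in S}\chi_{\lambda-\lambda'}(s)=0,
\]
because $\chi_{\lambda-\lambda'}$ sums to $0$ over $S$ by hypothesis. Part (b) is symmetric: the pair $\lambda+\omega,\lambda'+\omega$ satisfies $(\lambda+\omega)-(\lambda'+\omega)=\lambda-\lambda'$, so the orthogonality integral over $S$ is unchanged. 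In both cases one also needs to check that the new indexing set still has the same cardinality, which is immediate since translation is a bijection.

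The interesting part is (c), the self-duality. Here I would introduce the $|S|\times|\Lambda|$ character matrix $M$ with entries $M_{s,\lambda}=\chi_{\lambda}(s)$. By the Remark after Definition \ref{dede} we have $|S|=|\Lambda|$, so $M$ is square. The spectral hypothesis says the columns of $M$ are pairwise orthogonal, and since each column has squared norm $|S|$, the matrix $\frac{1}{\sqrt{|S|}}M$ is unitary. Therefore its rows are also pairwise orthogonal, i.e.\ for any $s\neq s'$ in $S$,
\[
\sum_{\lambda\in\Lambda}\chi_{\lambda}(s)\overline{\chi_{\lambda}(s')}
=\sum_{\lambda\in\Lambda}\chi_{s-s'}(\lambda)=0,
\]
where I used the standard Pontryagin duality identification $\chi_{\lambda}(s)=\chi_{s}(\lambda)$. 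This is precisely the statement that the characters $\{\chi_{s}:s\in S\}$ are pairwise orthogonal when restricted to $\Lambda$, so $S$ is a spectrum for $\Lambda$.

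The only subtlety is bookkeeping with the identification $G\cong\tG$ and making sure that the pairing used in Definition \ref{dede} matches the one that gives $\chi_{\lambda}(s)=\chi_s(\lambda)$; this was already fixed right before Proposition \ref{propsqfree} by the choice $\psi_j(g^i)=\alpha^{ji}$, which is symmetric in $i$ and $j$. No substantial obstacle is expected; the whole lemma is a collection of standard symmetry properties stated for later convenience.
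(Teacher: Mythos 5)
Your proof is correct. Parts (a) and (b) follow exactly the paper's route: the authors likewise factor $\chi_{\delta}(t)$ out of the sum over $S+t$ for (a), and observe $\Lambda-\Lambda=(\Lambda+\omega)-(\Lambda+\omega)$ for (b). For (c) the paper is far terser -- it simply asserts that the claim ``follows by the fact that a finite abelian group is canonically isomorphic to its double dual'' -- whereas you supply the actual argument behind that one-liner: $|S|=|\Lambda|$ makes the character matrix square, column orthogonality plus constant column norm $|S|$ makes $\frac{1}{\sqrt{|S|}}M$ unitary, and row orthogonality of a unitary matrix is exactly the statement that $\{\chi_s : s\in S\}$ are orthogonal on $\Lambda$. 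Your version is the more complete one; the only caveat, which you already flag yourself, is that the lemma is stated for a general finite abelian group while the symmetric pairing $\chi_{\lambda}(s)=\chi_s(\lambda)$ is set up in the paper only for cyclic groups -- the paper's appeal to the \emph{canonical} double-dual isomorphism avoids having to choose such a pairing, while your route requires fixing one (e.g.\ factor by factor in a cyclic decomposition). This is cosmetic and does not affect correctness.
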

\begin{proof}
\begin{enumerate}
\item If $\sum_{s \in S} \chi_{\delta}(s)=0$ for some $\delta \in \lala$, then since $\chi_{\delta}$ is a homomorphism, we have $$\sum_{u \in (S+t)} \chi_{\delta}(u)= \chi_{\delta}(t)\sum_{s \in S} \chi_{\delta}(s)=0.$$ Thus the orthogonality of the representations corresponding to the spectrum is preserved under translation.

\item Similarly, the orthogonality of the representations corresponding to $\La + \omega$ follows from the fact that
$\lala=(\La + \omega)-(\La + \omega) $.
\item This follows by the fact that a finite abelian group is canonically isomorphic to its double dual. \qedhere
\end{enumerate}
\end{proof}

\begin{cor}
It is enough to prove Theorem \ref{thmfotetel} for spectral sets $S$
with $0 \in S$ and with spectrum $\Lambda$ that contains $0$.
\end{cor}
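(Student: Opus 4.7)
The plan is a direct unpacking of Lemma \ref{lemeltol}. Given an arbitrary spectral set $S\subseteq\Z_{p^nq^2}$ with spectrum $\Lambda$, I would produce a translated pair $(S',\Lambda')$ satisfying $0\in S'$ and $0\in\Lambda'$, show that $S'$ is spectral with spectrum $\Lambda'$, and argue that proving tileability of $S'$ is equivalent to proving tileability of $S$. Both reductions are genuinely one-line applications of the lemma.

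More precisely, first I would fix arbitrary elements $s_0\in S$ and $\lambda_0\in\Lambda$ and set
\[
S':=S-s_0,\qquad \Lambda':=\Lambda-\lambda_0.
\]
Part \ref{itemeltols} of Lemma \ref{lemeltol} immediately gives that $S'$ is spectral with the same spectrum $\Lambda$; then part \ref{itemeltolomega} replaces $\Lambda$ by $\Lambda'$ as another spectrum of $S'$. By construction $0\in S'\cap\Lambda'$, so $(S',\Lambda')$ is of the normalized form required by the corollary.

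The second step is to transfer the conclusion back to the original $S$. Tiling is translation invariant in an abelian group: if Theorem \ref{thmfotetel}, in its normalized form, supplies $T\subseteq\Z_{p^nq^2}$ with $S'+T=\Z_{p^nq^2}$, then
\[
S+(T-s_0)=(S'+s_0)+(T-s_0)=S'+T=\Z_{p^nq^2},
\]
so $T-s_0$ is a tiling complement of $S$. Hence assuming the normalized version of Theorem \ref{thmfotetel} suffices to deduce the full statement.

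I do not expect any real obstacle here: the corollary is essentially the observation that the spectral property and the tiling property are both invariant under translations of $S$ (and, for the spectral side, of $\Lambda$ as well), so one may freely assume the convenient normalization $0\in S$, $0\in\Lambda$ — a choice that will simplify later arguments because then $\Lambda=\Lambda-0\subseteq \Lambda-\Lambda$, giving immediate access to the orthogonality relations $\sum_{s\in S}\chi_\lambda(s)=0$ for every $\lambda\in\Lambda\setminus\{0\}$.
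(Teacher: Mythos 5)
Your proposal is correct and is precisely the argument the paper intends: the corollary is stated without proof as an immediate consequence of Lemma \ref{lemeltol}\ref{itemeltols} and \ref{itemeltolomega}, combined with the translation-invariance of tiling, exactly as you spell out. No issues.
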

From now on we assume $0 \in S$ and $0 \in \La$.

\begin{lem}\label{propgenerate}
Let $G$ be a finite abelian group and let $S$ be spectral in $G$,
that does not generate $G$. Assume that for every proper subgroup
$H$ of $G$ we have $\mathbf{S-T}$(H). Then $S$ tiles $G$.
\end{lem}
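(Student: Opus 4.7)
The plan is to restrict to the proper subgroup $H := \langle S \rangle$, show that $S$ is still spectral inside $H$, apply the hypothesis $\mathbf{S-T}(H)$ to obtain a tiling complement of $S$ in $H$, and then lift this to a tiling of $G$ by throwing in coset representatives of $H$. After translating so that $0 \in S$ (by Lemma~\ref{lemeltol}(a), which does not affect spectrality), we have $S \subseteq H$, and the assumption that $S$ does not generate $G$ is exactly $H \neq G$.

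The crucial step is to show that $S$, viewed as a subset of $H$, is spectral in $H$. Let $\Lambda \subseteq \widetilde{G}$ be a spectrum for $S$ in $G$. Since every character of $H$ is the restriction of some character of $G$, the restriction map $\pi\colon \widetilde{G} \to \widetilde{H}$ is a surjective homomorphism. I claim that $\Lambda_H := \pi(\Lambda)$ is a spectrum for $S$ in $H$. Indeed, for any $\chi_1, \chi_2 \in \Lambda$ with $\chi_1 \neq \chi_2$, orthogonality of the corresponding characters on $S$ yields
\[ \sum_{s \in S} \chi_1(s)\overline{\chi_2(s)} = 0, \]
and since $S \subseteq H$ this sum equals $\sum_{s \in S} \pi(\chi_1)(s)\overline{\pi(\chi_2)(s)}$. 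If we had $\pi(\chi_1) = \pi(\chi_2)$ the sum would be $|S| \neq 0$; hence $\pi$ is injective on $\Lambda$, so $|\Lambda_H| = |\Lambda| = |S|$. The $|S|$ distinct characters in $\Lambda_H$ are pairwise orthogonal on $S$, hence form an orthogonal basis of $L^2(S)$, so $S$ is spectral in $H$ with spectrum $\Lambda_H$.

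Now the hypothesis $\mathbf{S-T}(H)$ applied to the proper subgroup $H$ supplies a set $T_0 \subseteq H$ with $S + T_0 = H$ (each element represented exactly once). Fixing any set $R \subseteq G$ of coset representatives of $H$ in $G$ and setting $T := T_0 + R$, we obtain
\[ S + T = (S + T_0) + R = H + R = G, \]
with each element of $G$ having a unique representation. Thus $S$ tiles $G$. The only subtle point in the whole argument is the non-collapsing of $\Lambda$ under the restriction map $\pi$, and this is immediate from $S \subseteq H$, so no real obstacle arises.
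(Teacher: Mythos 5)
Your proof is correct and follows essentially the same route as the paper: restrict the spectrum's characters to $H=\langle S\rangle$, observe they remain orthogonal on $S\subseteq H$, invoke $\mathbf{S-T}(H)$ to get $S+T_0=H$, and extend by coset representatives to tile $G$. Your explicit verification that the restriction map is injective on $\Lambda$ (so that no characters collapse) is a point the paper's proof passes over silently, and is a worthwhile addition.
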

\begin{proof}
Let $S$ be a spectral set with orthogonal basis $\{\chi_{\lambda} :
\lambda \in \Lambda \}=\chi_{\Lambda} \subset \widetilde{G}$ and let
$\langle S \rangle =H < G$. Since every $\chi_{\lambda}$ is $1$
dimensional, we have $\{ {\chi_{\lambda}}_{|H} \mid \lambda \in
\Lambda \} \subseteq \widetilde{H}$ and clearly these are still
orthogonal on $S$, since $S \subset H$. Then using that
$\mathbf{S-T}$(H) holds, there is a set $T \subset H$ with $S+T=H$.
Now let $U$ be a complete set of coset representatives of $G/H$.
Then we have $S+(T+U)=G$.
\end{proof}
Now we prove a similar lemma reducing the possible structure of
$\La$.
\begin{lem}\label{lem1} Let $G$ by a cyclic group of order $N$ and let us suppose that $\mathbf{S-T}$(G/H) holds on every proper factor $G/H$.
Let $S$ be a spectral set of $G$ and $\Lambda$ be the corresponding
spectrum. Assume that the intersection of the kernels of the
elements of $\chi_{\La}$ contains $H_{\frac{N}{\ell}} \ne 1$ for
some $1<\ell \mid N$. Then $S$ tiles $G$.
\end{lem}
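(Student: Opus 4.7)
The plan is to descend $S$ to the proper factor $G/K$, where $K=H_{N/\ell}$, and apply the hypothesis $\mathbf{S-T}(G/K)$ there. Set $K=H_{N/\ell}$, a nontrivial subgroup of $G$ of order $\ell$; by assumption, every $\chi_\lambda$ with $\lambda\in\Lambda$ is trivial on $K$, and hence descends to a character $\bar\chi_\lambda$ of the cyclic group $G/K$ of order $N/\ell<N$.

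The first and central step is to show that $S$ meets each coset of $K$ in at most one element. Since $\{\chi_\lambda|_S:\lambda\in\Lambda\}$ is an orthogonal family in $\C^S$ and $|\Lambda|=|S|$, the $|S|\times|S|$ matrix $M=[\chi_\lambda(s)]_{\lambda\in\Lambda,\,s\in S}$ is invertible. If two distinct $s_1,s_2\in S$ lay in the same coset of $K$, then $\chi_\lambda(s_1)=\chi_\lambda(s_2)$ for every $\lambda\in\Lambda$, so $M$ would have two identical columns, contradicting invertibility. Thus the projection $\pi:G\to G/K$ is injective on $S$, and $\overline S:=\pi(S)$ is an honest set in $G/K$ of cardinality $|S|$.

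Next, I verify that $\overline S$ is a spectral set in $G/K$ with spectrum $\Lambda$ (viewed inside $\widetilde{G/K}\subseteq\widetilde G$). Using the bijection $S\leftrightarrow\overline S$ and the identity $\chi_\lambda(s)=\bar\chi_\lambda(\pi(s))$, one has for any $\lambda\neq\lambda'$ in $\Lambda$,
\[
\sum_{\bar s\in\overline S}\bar\chi_\lambda(\bar s)\,\overline{\bar\chi_{\lambda'}(\bar s)}=\sum_{s\in S}\chi_\lambda(s)\,\overline{\chi_{\lambda'}(s)}=0,
\]
so $\{\bar\chi_\lambda|_{\overline S}:\lambda\in\Lambda\}$ is an orthogonal family of size $|\overline S|$, proving spectrality. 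Since $K\ne 1$, $G/K$ is a proper factor of $G$, and invoking $\mathbf{S-T}(G/K)$ produces a set $\overline T\subseteq G/K$ with $\overline S+\overline T=G/K$ (each element appearing exactly once).

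Finally, I lift this tiling to $G$. Choose any set $T_0\subseteq G$ of coset representatives of $\overline T$, and put $T=T_0+K$. Projecting, $\pi(S+T_0)=\overline S+\overline T=G/K$ with multiplicity one, while $|S+T_0|\le|S|\cdot|T_0|=|\overline S|\cdot|\overline T|=|G/K|$; equality forces $S+T_0$ to be a transversal of $K$ in $G$. Hence $S+T=(S+T_0)+K=G$ with every element appearing exactly once, and $T$ is a tiling complement of $S$ in $G$. The main obstacle is the first step — the linear-algebra argument that forces $\overline S$ to be a genuine set rather than a nontrivial multiset in $G/K$; everything afterwards is a routine descent-and-lift through the induction hypothesis.
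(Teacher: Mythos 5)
Your proposal is correct and follows essentially the same route as the paper: show the projection to $G/H_{N/\ell}$ is injective on $S$ (the paper argues this from $\chi_\Lambda$ being a basis of functions on $S$, you from invertibility of the character matrix — the same fact), descend the spectral pair to the factor group, apply $\mathbf{S-T}(G/H_{N/\ell})$, and take the preimage of the resulting tiling complement. Your extra counting verification in the lifting step is a slightly more explicit version of what the paper leaves implicit.
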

\begin{proof}
By our assumptions that the elements of $\chi_{\Lambda}$
can be considered as irreducible representations of $G/H_{\frac{N}{\ell}}$ since their kernel \out{is contained} in $H_{\frac{N}{\ell}}$. 

Let $S_{\ell}$ denote the multiset obtained as the image of $S$ by
the canonical projection $\pi_{\ell}$ of $G$ to
$G/H_{\frac{N}{\ell}}  \cong
H_{\ell}$. We claim that multiset $S_{\ell}$ is a set in $H_{\ell}$. 
Indeed there can not be two elements of $S$ in the same coset of
$H_{\frac{N}{\ell}}$ since otherwise each element of
$\chi_{\Lambda}$ would have the same value on them, contradicting
the fact that these representations form a basis of the set of
complex valued function on $S$. Thus $S_{\ell}$ is a
set. 
Now it is easy to derive that $\Lambda/H_{\frac{N}{\ell}}$ is a
spectrum with respect to $S_{\ell}$ in $G/H_{\frac{N}{\ell}}$ since
$\chi_{\lambda}(\pi_{\ell}(s))=\chi_{\lambda}(s)$ for every $s \in
S$ and $\la \in \La$.

We know $\mathbf{S-T}$($G/H_{\frac{N}{\ell}}$) holds. As $S_{\ell}$
is a spectral set in $G/H_{\frac{N}{\ell}}$ there is $T_{\ell}
\subset G/H_{\frac{N}{\ell}}$ with
$S_{\ell}+T_{\ell}=G/H_{\frac{N}{\ell}}$. Then if $T$ is the
preimage of $T_l$ under the canonical projection from $G$ to
$G/H_{\frac{N}{\ell}}$, then we have $S+T=G$.
\end{proof}


\begin{obs}\label{obsorder} Let us recall that $S(x)$ is the mask polynomial of the spectral set $S$.
Note that for $\chi \in \widetilde{G}$ of order $k$, then $\sum_{s
\in S}\chi(s)=0$ is equivalent to the fact that a primitive $k$'th
root of unity $\xi_k$ is a root of $S(x)$. Since $\Phi_k(x)$ is
irreducible over $\Q$ we have $\Phi_k(x) \mid S(x)$ hence every
primitive $k$'th root of unity is the root of $S(x)$ and $\sum_{s
\in S}\chi'(s)=0$ for every $\chi' \in \widetilde{G}$ of the same
order. If $\La\subseteq G$ is a spectrum of $S$, the above can be
summarized to
\begin{equation}\label{ordroots}
 S(\xi_{\ord(\la-\la')})=0,
\end{equation}
for every $\la\neq\la'$ in a spectrum $\La$, using \eqref{Ladiff}.
\end{obs}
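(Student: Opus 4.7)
The plan is a three-step translation. First I would rewrite the character-sum condition as vanishing of the mask polynomial at a root of unity. Using the identification $\la\mapsto\chi_\la$ with $\chi_\la(s)=\xi_N^{\la s}$ fixed earlier in the paper, we have
$$\sum_{s\in S}\chi_\la(s)=S(\xi_N^\la).$$
If $\chi_\la$ has order $k$, then $k\mid N$ and $\xi_N^\la$ is a primitive $k$-th root of unity, so $\sum_{s\in S}\chi_\la(s)=0$ is equivalent to that specific primitive $k$-th root of unity being a root of $S(x)\in\Z[x]$.

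Second, since $\Phi_k$ is the minimal polynomial over $\Q$ of every primitive $k$-th root of unity and $S(x)\in\Z[x]$, Gauss's lemma gives $\Phi_k(x)\mid S(x)$ in $\Z[x]$. Consequently \emph{every} primitive $k$-th root of unity, in particular $\xi_k$ itself, is a root of $S(x)$, and therefore $\sum_{s\in S}\chi'(s)=0$ for every $\chi'\in\widetilde{G}$ of order $k$. This is the content of the first two sentences of the observation.

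Third, I would apply the previous step with $\chi=\chi_{\la-\la'}$, where $\la\neq\la'$ belong to a spectrum $\La$ of $S$. The orthogonality of $\chi_\la$ and $\chi_{\la'}$ on $S$, written via $\chi_{\la-\la'}=\chi_\la\overline{\chi_{\la'}}$, gives $\sum_{s\in S}\chi_{\la-\la'}(s)=0$. Since $\chi_{\la-\la'}$ has order $\ord(\la-\la')$, the previous paragraph immediately yields $S(\xi_{\ord(\la-\la')})=0$, which is exactly \eqref{ordroots}.

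I do not anticipate any genuine obstacle: the whole observation is a translation between three languages, namely character sums over $S$, evaluation of the mask polynomial at roots of unity, and divisibility by cyclotomic polynomials. The only substantive ingredient is the irreducibility of $\Phi_k$ over $\Q$, which upgrades the vanishing of $S$ at one primitive $k$-th root of unity to vanishing at all of them simultaneously.
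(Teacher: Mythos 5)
Your proposal is correct and follows essentially the same route as the paper's inline justification: identify the character sum with the evaluation $S(\xi_N^{\la})$ at a primitive $k$-th root of unity, use the irreducibility of $\Phi_k$ over $\Q$ to upgrade vanishing at one primitive $k$-th root to divisibility $\Phi_k(x)\mid S(x)$ and hence vanishing at all of them, and then apply this to $\chi_{\la-\la'}$ via the orthogonality condition on the spectrum. Your handling of the first step is in fact slightly more careful than the paper's phrasing, since the character sum a priori vanishes at one specific primitive $k$-th root (namely $\xi_N^{\la}$), and only after invoking irreducibility does one obtain $S(\xi_k)=0$ for the canonical $\xi_k$.
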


The question whether our techniques can be generalized naturally
arises. We point out here that in the next proposition we heavily
use the assumption that the order of cyclic groups is divisible by
at most two different primes.
\begin{prop}\label{propfaithful}
Let $G$ be a cyclic group of order $p^kq^{\ell}$ and let $|S| \ge 2$
be a spectral set.
Assume further that $\La$ is a
spectrum for $S$ such that the elements of $ \chi_{\Lambda} 
$ do not have a nontrivial common kernel.
Then for every faithful representation $\psi$ of $G$ we have
$\sum_{s \in S}\psi(s)=0$.
\end{prop}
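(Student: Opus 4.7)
The plan is to reduce the conclusion $\sum_{s\in S}\psi(s)=0$ to showing $\Phi_N(x)\mid S(x)$ with $N=p^kq^\ell$, and to obtain this divisibility by finding a difference $\lambda-\lambda'\in\Lambda-\Lambda$ of order exactly $N$, to which Observation \ref{obsorder} then applies. Since a faithful representation has the form $\psi=\chi_\mu$ with $\gcd(\mu,N)=1$, the sum $\sum_{s\in S}\psi(s)=S(\xi_N^{\mu})$ vanishes for all such $\mu$ iff $\Phi_N(x)\mid S(x)$.

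First I would recast the hypothesis. Under the blanket assumption $0\in\Lambda$ (valid since we are after Lemma \ref{lemeltol}), the common kernel $\bigcap_{\lambda\in\Lambda}\ker\chi_\lambda$ is exactly the annihilator of the subgroup $\langle\Lambda\rangle\le\Z_N$ under the natural pairing. Thus the ``no nontrivial common kernel'' assumption is equivalent to $\langle\Lambda\rangle=\Z_N$, i.e.\ $\gcd(\Lambda\cup\{N\})=1$. Since $N=p^kq^\ell$, this means there exist $\mu,\nu\in\Lambda$ with $p\nmid\mu$ and $q\nmid\nu$.

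Now I would split into two cases. If some $\mu\in\Lambda$ satisfies $\gcd(\mu,N)=1$, then $\mu-0=\mu$ lies in $(\Lambda-\Lambda)\setminus\{0\}$ and has $\ord(\mu)=N$; by Observation \ref{obsorder} we get $S(\xi_N)=0$. Otherwise, every $\lambda\in\Lambda$ is divisible by $p$ or by $q$. Pick $\mu,\nu\in\Lambda$ as above; in this case necessarily $q\mid\mu$ (since $p\nmid\mu$ but $\gcd(\mu,N)>1$) and $p\mid\nu$. Then $\mu-\nu\not\equiv 0\pmod p$ (as $p\nmid\mu$, $p\mid\nu$) and $\mu-\nu\not\equiv 0\pmod q$ (as $q\mid\mu$, $q\nmid\nu$), so $\gcd(\mu-\nu,N)=1$ and $\ord(\mu-\nu)=N$. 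Because $\mu$ and $\nu$ differ modulo $p$, they are distinct, so $\mu-\nu\in(\Lambda-\Lambda)\setminus\{0\}$. In either case Observation \ref{obsorder} yields $S(\xi_N)=0$, equivalently $\Phi_N(x)\mid S(x)$, and the conclusion follows.

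The hard (or at least most delicate) point is the step that forces $q\mid\mu$ in the second case: this dichotomy breaks down as soon as $N$ has three or more distinct prime factors, since then a $\lambda$ with $p\nmid\lambda$ need not be divisible by $q$. This is precisely the structural reason why the proposition, and the paper's whole method for the spectral-to-tile direction, relies on having at most two primes dividing $N$, as already flagged in the paragraph preceding the statement.
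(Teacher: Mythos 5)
Your proposal is correct and follows essentially the same route as the paper: both arguments translate the no-common-kernel hypothesis into the existence of $\mu,\nu\in\La$ with $p\nmid\mu$ and $q\nmid\nu$, and then produce an element of $(\La-\La)\setminus\{0\}$ coprime to $N$ (either $\mu-0$ or $\mu-\nu$, using the two-prime dichotomy to force $q\mid\mu$, $p\mid\nu$ in the second case), after which Observation \ref{obsorder} gives $S(\xi_N)=0$ and hence the vanishing at every faithful character. Your write-up merely spells out a few steps the paper leaves implicit (the equivalence with $\langle\La\rangle=\Z_N$ and the reduction to one faithful representation via $\Phi_N$).
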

\begin{proof}
Note that by Observation \ref{obsorder}, it is enough to prove the
statement for one faithful representation.

Since the elements of $\chi_{\La}$ do not have a common kernel we
have a $\la_1 \in \La$ with $p \nmid \la_1$. If $q \nmid \la_1$,
then we are done so we assume $q \mid \la_1$. Similarly, we might
assume that there exists $\la_2 \in \La$ with $q \nmid \la_2$ and $p
\mid \la_2$. In this case $\chi_{\lambda_1-\lambda_2}$ generates
$\widetilde{G}$ so we have $\sum_{s \in S}\chi_{\la_1-\la_2}(s)=0$.
\end{proof}

This has the following interpretation in terms of mask polynomials.

\begin{cor}\label{Nroot}
 Let $(S,\La)$ be a spectral pair in $\Z_N$, where $N=p^kq^{\ell}$, such that $0\in S$, $0\in \La$, and each of $S$, $\La$ generates $\Z_N$. Then
 \[S(\xi_N)=\La(\xi_N)=0.\]
\end{cor}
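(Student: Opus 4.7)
The plan is to deduce this corollary directly from Proposition \ref{propfaithful} combined with Lemma \ref{lemeltol}(c). The key observation is that $S(\xi_N)=\sum_{s\in S}\psi(s)$, where $\psi\colon n\mapsto\xi_N^n$ is the canonical faithful representation of $\Z_N$. So the first claim is just the conclusion of Proposition \ref{propfaithful} applied to this particular $\psi$, provided the two hypotheses of that proposition are met: $|S|\geq 2$ and the characters $\chi_\lambda$, $\lambda\in\Lambda$, have no nontrivial common kernel.

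For the cardinality, since $S$ generates $\Z_N$ and $N=p^kq^\ell\geq 2$, necessarily $|S|\geq 2$. For the kernel condition, I would translate it using the identification $\tG\cong\Z_N$ via $\chi_\lambda(i)=\xi_N^{\lambda i}$: an element $i\in\Z_N$ lies in $\ker\chi_\lambda$ iff $\lambda i\equiv 0\pmod N$, so
\[
\bigcap_{\lambda\in\Lambda}\ker\chi_\lambda \;=\; \{i\in\Z_N : mi\equiv 0\pmod N \text{ for every } m\in\langle\Lambda\rangle\}.
\]
Because $\Lambda$ generates $\Z_N$, this set contains only those $i$ with $1\cdot i\equiv 0\pmod N$, i.e.\ it reduces to $\{0\}$. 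Hence Proposition \ref{propfaithful} applies and yields $S(\xi_N)=0$.

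For the second assertion $\Lambda(\xi_N)=0$, I would use Lemma \ref{lemeltol}(c): since $(S,\Lambda)$ is a spectral pair, so is $(\Lambda,S)$. Now $0\in\Lambda$, $|\Lambda|\geq 2$ (again because $\Lambda$ generates $\Z_N$), and the spectrum $S$ generates $\Z_N$ by hypothesis, so the same argument with the roles of $S$ and $\Lambda$ interchanged gives $\Lambda(\xi_N)=0$.

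I do not expect any serious obstacle: the argument is essentially bookkeeping, translating the combinatorial hypothesis that $\Lambda$ generates $\Z_N$ into the representation-theoretic trivial-common-kernel hypothesis required by Proposition \ref{propfaithful}, and then symmetrizing via duality. The only point requiring a bit of care is the equivalence between "$\Lambda$ generates $\Z_N$" and "$\chi_\Lambda$ has trivial common kernel", but this is immediate from the canonical self-duality of a finite cyclic group.
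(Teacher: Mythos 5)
Your proposal is correct and follows the same route the paper intends: Corollary \ref{Nroot} is stated there precisely as the mask-polynomial reformulation of Proposition \ref{propfaithful}, with the hypothesis that $\La$ (resp.\ $S$) generates $\Z_N$ standing in for the trivial-common-kernel condition, and the second identity obtained by swapping the roles of $S$ and $\La$ via Lemma \ref{lemeltol}\ref{itemduality}. Your explicit verification that ``$\La$ generates $\Z_N$'' is equivalent to $\bigcap_{\la\in\La}\ker\chi_\la=\{0\}$, and that $0\in S$ together with generation forces $|S|\ge 2$, is exactly the bookkeeping the paper leaves implicit.
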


\begin{prop}\label{propcsakzpeltolt}
Let $S$ be a spectral set in $\Z_N$ and let $p$ be a prime divisor
of $N$. Assume that for every proper factor group $\Z_N/H$ of $\Z_N$
we have $\mathbf{S-T}(\Z_N/H)$. Assume further that $S$ is the
disjoint union of cosets of $\mathbb{Z}_p$. Then $S$ tiles
$\mathbb{Z}_{N}$. \end{prop}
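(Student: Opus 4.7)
The plan is to reduce to the proper quotient $\Z_N/\Z_p\cong\Z_{N/p}$, apply the hypothesis $\mathbf{S-T}(\Z_{N/p})$ there, and lift the tiling back. If $N=p$ the claim is immediate (either $S=\emptyset$ or $S=\Z_p$, both of which tile trivially), so assume $N>p$ and let $\pi\colon\Z_N\twoheadrightarrow\Z_{N/p}$ be the canonical projection, whose kernel is $\Z_p$. The hypothesis that $S$ is a disjoint union of $\Z_p$-cosets says exactly that each fiber of $\pi|_S$ has size $p$, so $\bar S:=\pi(S)$ is a genuine subset of $\Z_{N/p}$ with $|\bar S|=|S|/p$.

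The main task is to produce a spectrum for $\bar S$ in $\Z_{N/p}$ out of a given spectrum $\La$ for $S$. By Lemma~\ref{lemeltol}(b) I may translate $\La$ so that $0\in\La$. Characters of $\Z_N$ trivial on $\Z_p$ are indexed precisely by $p\Z_N$, giving a canonical identification of $\widetilde{\Z_{N/p}}$ with $p\Z_N$. I then partition
\[
 \La=\bigsqcup_{j\in\Z_N/p\Z_N}\La_j,\qquad \La_j=\La\cap(p\Z_N+j).
\]
For distinct $\lambda,\lambda'\in\La_j$ the difference $\lambda-\lambda'$ lies in $p\Z_N$, so $\chi_{\lambda-\lambda'}$ is trivial on $\Z_p$; combined with $S$ being a union of $\Z_p$-cosets, the orthogonality relation $\sum_{s\in S}\chi_{\lambda-\lambda'}(s)=0$ collapses to $p\sum_{\bar s\in\bar S}\bar\chi_{\lambda-\lambda'}(\bar s)=0$, where $\bar\chi$ is the induced character of $\Z_{N/p}$. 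Hence $\La_j$, after shifting by one of its elements into $p\Z_N$ and identifying with a subset of $\Z_{N/p}$, is an orthogonal family of characters on $\bar S$, so $|\La_j|\le|\bar S|$.

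A counting argument then finishes the extraction: $\sum_j|\La_j|=|\La|=|S|=p|\bar S|$, while the sum runs over at most $p$ nonempty cosets, each contributing at most $|\bar S|$. Equality throughout forces $|\La_j|=|\bar S|$ for every nonempty $\La_j$, and any such $\La_j$ is the desired spectrum for $\bar S$ in $\Z_{N/p}$. Applying $\mathbf{S-T}(\Z_{N/p})$ yields a tiling complement $\bar T\subseteq\Z_{N/p}$ for $\bar S$. Lifting $\bar T$ to a set $T\subset\Z_N$ of arbitrary coset representatives, the addition map $S\times T\to\Z_N$, $(s,t)\mapsto s+t$, is injective: projecting to $\Z_{N/p}$ and invoking uniqueness in $\bar S\oplus\bar T$ pins down $\pi(t)$ and hence $t\in T$, which then determines $s$. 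By cardinality, $|S|\cdot|T|=p|\bar S|\cdot|\bar T|=N$, so the map is a bijection and $S\oplus T=\Z_N$.

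I expect the conceptual heart to be the counting step. The assumption that $S$ is a union of $\Z_p$-cosets automatically kills every character nontrivial on $\Z_p$, so the ``nontrivial'' orthogonality information in $\La$ must be packed into characters trivial on $\Z_p$; this is what forces each coset $\La_j$ to attain the maximum size $|\bar S|$ and delivers a full spectrum downstairs. Everything else is routine bookkeeping around the projection and lift.
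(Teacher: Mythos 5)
Your argument is correct and follows essentially the same route as the paper's: descend to $\bar S=\pi(S)$ in $\Z_N/\Z_p$, observe that spectrum elements lying in a common coset of $p\Z_N$ give characters trivial on $\Z_p$ that remain orthogonal on $\bar S$, extract a spectrum of size $|S|/p$ for $\bar S$, apply $\mathbf{S-T}(\Z_N/\Z_p)$, and lift the tiling complement back. The only cosmetic difference is that the paper locates one full-size coset of the spectrum by pigeonhole followed by a translation of $\La$, whereas you obtain that every nonempty $\La_j$ has size exactly $|\bar S|$ by summing the bounds $|\La_j|\le|\bar S|$ over all $p$ cosets.
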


\begin{proof}
By our assumptions $\abs{S}=pr=\abs{\Lambda}$ for some $r \in
\mathbb{N}$ and $\La$ is a spectrum for $S$. Thus at least one of
the cosets of $H  _{p}$ contains at least $r$ elements of $\Lambda$.
By Lemma \ref{lemeltol} \ref{itemeltolomega} we may assume that
$\abs{H_{p} \cap  \La  } \ge
r$. The elements $\chi_{\La}
\subseteq \widetilde{H}_{p}$ are representations
    having a common kernel
$\mathbb{Z}_p=H_{\frac{N}{p}}$. By our assumption $S$ is the
disjoint union of $\Z_p$-cosets, so it can be written as
$\mathbb{Z}_p + B$ for some $B \subseteq \mathbb{Z}_N/\Z_{p}$. The
representations in $\widetilde{H}_{p} \cap \chi_{\La}$ are constant
on every coset of $\mathbb{Z}_p$. Hence for every $\chi_1 \ne \chi_2
\in \widetilde{H}_{p} \cap \chi_{\La}$ we have
\begin{equation*}
\begin{split}
0&= \sum_{s \in S} \chi_1(s)\bar{\chi}_2(s)= \sum_{s \in \Z_p +B} \chi_1(s)\bar{\chi}_2(s)=\sum_{t \in B} \sum_{x \in \Z_p}\chi_1(t+x)\bar{\chi}_2(t+x) \\
&=\sum_{t \in B} \sum_{x \in
\Z_p}\chi_1(t)\chi_1(x)\bar{\chi}_2(t)\bar{\chi}_2(x)=\sum_{t \in
B}p\chi_1(t)\bar{\chi}_2(t)=p \sum_{t \in
B}\chi_1(t)\bar{\chi}_2(t),
\end{split}
\end{equation*}
since the kernel of $\chi_1$ and $\chi_2$ contains $\Z_p$. Thus we
obtain a set of $r=|B|$ representations of $\Z_N/\Z_p$, which are
mutually orthogonal, hence forming a basis of $L^2(B)$.
Thus $B$ is a spectral set in $\Z_N/\Z_p$ and using our
assumption %
we obtain that there exists $T$ with $B+T=\Z_N/\Z_p$. So finally we
get $S+T= (\Z_p+B)+T=\Z_p+(B+T)=\Z_p+ \Z_N/\Z_p=\mathbb{Z}_N$.
\end{proof}

Before we start to detail the proof of Theorem \ref{thmfotetel} we
summarize that we have already proved in the previous sections about
the structure of a spectral set $S$ in $\mathbb{Z}_{p^nq^2}$. Note
that we may assume by induction on $n$ that $\mathbf{S-T}(H)$ holds
for every proper subgroup or factor $H$ of $\Z_{p^nq^2}$. Indeed,
Fuglede's Conjecture holds for $\Z_{pq^2}$ and for $\Z_{p^nq}$ by
\cite{MK17}, which corresponds to the base case of our induction.

If $|S|=1$, then $S$ is clearly a spectral set and also a tile. By
Lemma \ref{lem1} we might assume that the elements of $\chi_{\La}$
do not have a common kernel so by Proposition \ref{propfaithful} we
might assume that $|S| \ge 2$ is a non-Pompeiu set with respect to a
faithful representation of $\mathbb{Z}_{p^nq^2}$. Hence
by Proposition \ref{propsqfree} we have $$S=\sum_{g \in A} u_g
(\Z_p+g)+\sum_{h \in B} v_h (\Z_q+h),$$ where $u_g, v_h \in \Q$ and
$A$ and $B$ are sets of coset representatives of $\Z_p$ and $\Z_q$,
respectively. Thus $S$ is the weighted sum of cosets of
$\mathbb{Z}_p$ and $\mathbb{Z}_q$. Until now we have only seen that
the weights are rational numbers. Now we prove that all weights are
$0$ or $1$.

The subgroups $\Z_p$ and $\Z_q$ generate $\mathbb{Z}_{pq}$, so we
write $S$ as the disjoint union $$S= \cup_{k \in C} S_{k \bmod
N/pq},$$ where $k$ runs through a set of representatives $C$ of the
cosets of $\Z_{pq}$ for $k=0\stb N/pq-1$. Now
\begin{equation}\label{eqSk}
S_{k \bmod N/pq}= \sum_{g \in A,~g+\Z_p \subset k+\Z_{pq}    } u_g
(\Z_p+g)+\sum_{h \in B,~h+\Z_q \subset k+\Z_{pq}  } v_h (\Z_q+h)
\end{equation}
for every $k \in C$, so $S_{k \bmod N/pq}$ inherits its weights from
$S$. Now it follows from Proposition \ref{proppq} that in
\eqref{eqSk} $u_g=0$ for every $g \in A,~g+\Z_p \subset k+\Z_{pq}$
or $v_h=0$ for every $h \in B,~h+\Z_q \subset k+\Z_{pq}$. Since
$S_{k \bmod N/pq}$ is a set, the remaining coefficients are $0$ or
$1$. Then $S_{k \bmod N/pq}$ is the disjoint nontrivial union of
$\Z_p$-cosets or $\Z_q$-cosets. Only one type appears for every
fixed $k=0 \stb N/pq-1$ except in the obvious case as follows:

It can happen that $S$ contains a whole $\Z_{pq}$-coset, in which
case it can be considered as the union of only $\Z_p$-cosets and
only $\Z_q$-cosets as well.
Thus $S$ is the disjoint union of $\Z_p$-cosets and $\Z_q$-cosets.

Beside the case when $S$ contains both $\mathbb{Z}_{p}$-cosets and
$\mathbb{Z}_{q}$-cosets, by Proposition \ref{propcsakzpeltolt} we
are done. Thus, we may assume $S$ contains both
$\mathbb{Z}_{p}$-cosets and $\mathbb{Z}_{q}$-cosets; we shall call
such sets \emph{nontrivial unions} of $\Z_p$- and $\Z_q$-cosets, to
emphasize that they cannot be expressed as unions consisting solely
of $\Z_p$-cosets, or $\Z_q$-cosets.

The above also follows from Corollary \ref{Nroot} and the structure
of vanishing sums of roots of unity of order $N$, where $N$ has at
most two distinct prime factors \cite{vanishingsum}. We added also a
condition that shows when such a vanishing sum corresponds to a
nontrivial union of $\Z_p$- and $\Z_q$-cosets, which is a
consequence of Corollary \ref{corpqfree} and Proposition
\ref{proppq}\ref{itempqb} or alternatively of Proposition 2.6 in
\cite{MK17}.

\begin{Thm}\label{vansumspos}
 Let $F(x)\in\Z_{\geq0}[x]$ and $N=p^mq^n$, where $p, q$ are different primes. Then, $F(\xi_N)=0$ if and only if
 \[F(x)\equiv P(x)\Php+Q(x)\Phq\bmod(x^N-1),\]
 for some $P(x),Q(x)\in\Z_{\geq0}[x]$. If $F(\xi_N^{p^k})\neq0$ (respectively, $F(\xi_N^{q^{\ell}})\neq0$) for some $1\leq k\leq m$ (resp. $1\leq \ell\leq n$), then we cannot have $P(x)\equiv0\bmod(x^N-1)$
 (resp. $Q(x)\equiv0\bmod(x^N-1)$).
\end{Thm}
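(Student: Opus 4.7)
The theorem naturally splits into three pieces: the easy ``if'' direction, the substantive ``only if'' direction, and the final non\-vanishing assertion. I would handle them separately, in increasing order of difficulty.

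\medskip

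The plan for the ``if'' direction is a one-line substitution. Assuming the congruence
\[ F(x)\equiv P(x)\Phi_p(x^{N/p})+Q(x)\Phi_q(x^{N/q})\pmod{x^N-1},\]
evaluate at $x=\xi_N$. Since $\xi_N^{N/p}$ is a primitive $p$-th root of unity, $\Phi_p(\xi_N^{N/p})=0$, and analogously $\Phi_q(\xi_N^{N/q})=0$, so $F(\xi_N)=0$.

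\medskip

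For the ``only if'' direction my plan is to reduce the problem to a fact already proved in the paper. Write $F(x)=\sum_i a_i x^i$ with $a_i\in\Z_{\geq0}$, and let $\widetilde{F}(x)$ be its reduction modulo $x^N-1$; since the reduction adds coefficients with matching residue classes of the exponents, $\widetilde{F}\in\Z_{\geq0}[x]$ has degree less than $N$, hence is the mask polynomial of a multiset $S\subseteq\Z_N$ in the sense of Definition \ref{defmask}. The hypothesis $F(\xi_N)=0$ is the same as $\widetilde{F}(\xi_N)=0$, which says exactly that $S$ is a non-Pompeiu multiset with respect to the faithful representation $x\mapsto\xi_N^x$. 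Proposition \ref{proppq}\ref{itempqb} then provides polynomials $P,Q\in\Z_{\geq0}[x]$ with $\widetilde{F}(x)\equiv P(x)\Phi_p(x^{N/p})+Q(x)\Phi_q(x^{N/q})\bmod(x^N-1)$, and this carries over to $F$ because $F\equiv\widetilde{F}\bmod(x^N-1)$.

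\medskip

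For the non\-vanishing assertion I would argue by contrapositive. Suppose $P(x)\equiv0\bmod(x^N-1)$; then $F(x)\equiv Q(x)\Phi_q(x^{N/q})\bmod(x^N-1)$, and evaluating at $\xi_N^{p^k}$ gives $F(\xi_N^{p^k})=Q(\xi_N^{p^k})\,\Phi_q(\xi_q^{p^k})$. Because $\gcd(p^k,q)=1$, the element $\xi_q^{p^k}$ is a primitive $q$-th root of unity, so $\Phi_q(\xi_q^{p^k})=0$, forcing $F(\xi_N^{p^k})=0$ for every admissible $k$. The symmetric argument, evaluating at $\xi_N^{q^\ell}$ with $\gcd(q^\ell,p)=1$, handles the assertion about $Q$.

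\medskip

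The only genuinely delicate ingredient is the nonnegativity of $P$ and $Q$ in the decomposition, and this is precisely what Proposition \ref{proppq}\ref{itempqb} accomplishes through the weight-shifting trick ($c_y'=c_y+d_b$, $d_x'=(c_a+d_x)-e$) on the minimal weight of the multiset; once that proposition is invoked, everything else is a straight substitution of roots of unity into cyclotomic polynomials.
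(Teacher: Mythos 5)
Your proof is correct and follows essentially the same route as the paper, which states the theorem without a standalone proof and attributes it precisely to Corollary \ref{corpqfree} and Proposition \ref{proppq}\ref{itempqb} --- the two ingredients you invoke for the substantive direction after reducing $F$ modulo $x^N-1$ to the mask polynomial of a multiset. The easy direction and the nonvanishing assertion, which you verify by direct substitution of roots of unity into the cyclotomic factors, are left implicit in the paper and are handled correctly here.
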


We will repeatedly use the above in Section \ref{sectiont2} in order
to obtain information about the structure of $S$ and $\La$ from the
vanishing of their mask polynomials on various $N$'th roots of
unity. Regarding the case when $S$ is a union of $\Z_p$-cosets (or
$\Z_q$-cosets), there is a characterization in terms of the mask
polynomial. This follows from a special case of Ma's Lemma \cite{Ma}
(see also Lemma 1.5.1 \cite{SchmidtFDM}, or Corollary 1.2.14
\cite{Pott}), adapted to the cyclic case, using the polynomial
notation.

\begin{lem}\label{ma}
 Suppose that $S(x)\in\Z[x]$, and let $\Z_N$ be a cyclic group such that $p^m\mid N$, but $p^{m+1}\nmid N$. If $S(\xi_d)=0$, for every $p^m\mid d\mid N$, then
 \[S(x)\equiv P(x)\Php\bmod(x^N-1).\]
 If the coefficients of $S$ are nonnegative, then $P$ can be taken with nonnegative coefficients as well. In particular, if $S\subseteq\Z_N$ satisfies $S(\xi_d)=0$, for every $p^m\mid d\mid N$, then
 $S$ is a union of $\Z_p$-cosets.
\end{lem}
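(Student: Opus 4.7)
The plan is to reformulate the vanishing hypothesis as a polynomial divisibility, reduce suitably modulo $x^N-1$, and then exploit the nonnegativity of the coefficients of $S$ directly.

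First, writing $N=p^m k$ with $\gcd(p,k)=1$, the divisors $d$ of $N$ with $p^m\mid d$ are precisely $d=p^m e$ with $e\mid k$. Using the standard identity $\Phi_{p^m e}(x)=\Phi_e(x^{p^m})/\Phi_e(x^{p^{m-1}})$, which is valid since $\gcd(p,e)=1$, a telescoping computation yields
$$\prod_{p^m\mid d\mid N}\Phi_d(x)=\frac{\prod_{e\mid k}\Phi_e(x^{p^m})}{\prod_{e\mid k}\Phi_e(x^{p^{m-1}})}=\frac{x^N-1}{x^{N/p}-1}=\Phi_p(x^{N/p}).$$
Since the cyclotomic polynomials are pairwise coprime in $\Q[x]$, the hypothesis $S(\xi_d)=0$ for every $p^m\mid d\mid N$ is equivalent to $\Phi_p(x^{N/p})\mid S(x)$ in $\Q[x]$. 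As $\Phi_p(x^{N/p})$ is monic with integer coefficients, the quotient lies in $\Z[x]$, so $S(x)=P_0(x)\Phi_p(x^{N/p})$ in $\Z[x]$ for some $P_0$.

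Next, I would reduce $P_0$ modulo $x^{N/p}-1$ to obtain $P\in\Z[x]$ with $\deg P<N/p$. Because $\Phi_p(x^{N/p})\cdot(x^{N/p}-1)=x^N-1$, the difference $(P_0-P)\Phi_p(x^{N/p})$ is a multiple of $x^N-1$, so $S(x)\equiv P(x)\Phi_p(x^{N/p})\bmod (x^N-1)$, establishing the first claim. For the nonnegativity, suppose that $S$ has nonnegative coefficients; reducing first modulo $x^N-1$ (which preserves nonnegativity) we may assume $\deg S<N$. Since $\Phi_p(x^{N/p})=1+x^{N/p}+\dots+x^{(p-1)N/p}$ and $\deg P<N/p$, the product $P(x)\Phi_p(x^{N/p})$ has degree $<N$, so both sides of the congruence agree as genuine polynomials and, for every $0\le j<N/p$ and $0\le r\le p-1$,
$$[x^{j+rN/p}]\bigl(P(x)\Phi_p(x^{N/p})\bigr)=[x^j]P(x).$$
Comparing with $S$ forces $[x^j]P=s_j\ge 0$ for every $j<N/p$, proving that $P$ has nonnegative coefficients.

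The same coefficient identity shows that the coefficients of $S$ are constant on each $H_{N/p}$-coset, and when $S\subseteq \Z_N$ is a genuine subset its coefficients lie in $\{0,1\}$; hence $[x^j]P\in\{0,1\}$ as well, and reading off the support of $P$ exhibits $S$ as the disjoint union of the $\Z_p$-cosets $j+H_{N/p}$ for which $[x^j]P=1$. The only delicate point in the argument is the degree control on $P$: by taking the representative with $\deg P<N/p$, one makes the coefficient comparison unambiguous, so nonnegativity is automatic and no manual coefficient redistribution (as in the general proof of Ma's lemma) is required.
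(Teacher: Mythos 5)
Your argument is correct. Note that the paper itself does not prove this lemma: it is quoted as a special case of Ma's Lemma with citations to \cite{Ma}, \cite{SchmidtFDM} and \cite{Pott}, so there is no in-paper proof to compare against line by line. Your self-contained derivation is the natural one for this special case, and each step checks out: the identity $\prod_{p^m\mid d\mid N}\Phi_d(x)=(x^N-1)/(x^{N/p}-1)=\Phi_p(x^{N/p})$ holds because the divisors $d$ of $N$ that do not divide $N/p$ are exactly those with $p^m\mid d$; pairwise coprimality of the cyclotomic polynomials then converts the vanishing hypothesis into the single divisibility $\Phi_p(x^{N/p})\mid S(x)$ in $\Q[x]$, hence in $\Z[x]$ by monicity. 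The point you flag as delicate is exactly the right one, and it is what makes this special case easier than the general Ma's Lemma: after reducing $S$ modulo $x^N-1$ (which preserves nonnegativity and the vanishing hypothesis), the degree count forces $\deg P_0<N/p$, so the congruence becomes an exact polynomial identity $S(x)=P(x)\Phi_p(x^{N/p})$ and the coefficient of $x^{j+rN/p}$ equals $[x^j]P$ for all $0\le j<N/p$, $0\le r\le p-1$. Nonnegativity of $P$ and, for $0$--$1$ coefficients, the decomposition of $S$ into cosets of the order-$p$ subgroup $H_{N/p}$ are then immediate, with no redistribution of coefficients between the summands as is needed in the general version (where $S$ is only assumed to vanish at a sparser set of roots of unity). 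In short: a correct and appropriately streamlined proof of precisely the special case the paper uses.
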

We summarize the reductions made so far in the following list.


\begin{summary}\label{summary}
We might assume that a spectral set $S \subset \mathbb{Z}_{p^nq^2}$
along with a spectrum $\La$, have the following structure:
\begin{enumerate}\label{enumsummary}
\item\label{enumsummary0} $0\in S$, $0\in \La$ and each of $S$ and $\La$ generates $\Z_{p^nq^2}$.
\item\label{enumsummarya} Both $S$ and $\La$ can be written as the disjoint nontrivial union of $\Z_p$-cosets and $\Z_q$-cosets and this holds for $S\cap (\Z_{pq}+g)$ and $\La\cap (\Z_{pq}+h)$ for every $g,h \in \Z_{p^nq^2}$ as well.
\item\label{enumsummaryb} There is a $\Z_{pq}$-coset which intersects $S$ and its complement. Further the intersection is the union of $\Z_p$-cosets. The same holds for another $\Z_{pq}$-coset with
$\Z_q$-cosets as well.
\item\label{enumsummaryc} Fuglede's conjecture holds for all $\Z_M$, with $M\mid p^nq^2$, $M<p^nq^2$ (induction assumption).
\end{enumerate}
\end{summary}
\begin{proof}
\begin{enumerate}
\item Follows from Lemma \ref{lemeltol} and Proposition \ref{propgenerate}.
\item Immediate consequence of part (a), Lemma \ref{propsqfree}, Proposition \ref{proppq} and Corollary \ref{Nroot}.
\item Follows from Proposition \ref{propcsakzpeltolt}.
\item It was proved in \cite{MK17} that Fuglede's conjecture holds for $N=p^nq$, and also for $N=pq^2$, so the given
 statement certainly holds for $p^2q^2$, which is the base case for the inductive argument. \qedhere
\end{enumerate}
\end{proof}

Now we turn to the main tool already used in \cite{MK17} to prove
that a spectral set tiles $\mathbb{Z}_{p^nq^2}$.
Clearly, sets coincide with mask polynomials having only
coefficients $0$ and $1$.
The following theorem was proved in \cite{CM1999}. Let $H_S$ be the
set of prime powers $r^a$ dividing $N$ such that $\Phi_{r^a}(x) \mid
S(x)$.
\begin{Thm}\label{thmt1t2}
If $S \subset \mathbb{Z}_N$ satisfies the following two conditions
\ref{t1} and \ref{t2}, then $S$ tiles $\mathbb{Z}_N$.
\begin{enumerate}[{\bf(T1)}]
 \item\label{t1} $S(1)=\prod_{d \in H_S} \Phi_d(1)$.
 \item\label{t2} For pairwise relative prime elements $s_i$ of $H_S$, $\Phi_{\prod s_i} \mid S(x)$.
\end{enumerate}
\end{Thm}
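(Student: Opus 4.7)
The plan is to exhibit an explicit tiling complement $T$ whose mask polynomial is a product of cyclotomic polynomials. Since
\[
\prod_{d\mid N,\ d>1}\Phi_d(x)=\frac{x^N-1}{x-1}=1+x+\cdots+x^{N-1},
\]
saying that $S$ tiles $\Z_N$ with complement $T$ is equivalent to $S(x)T(x)\equiv (x^N-1)/(x-1)\pmod{x^N-1}$ together with the requirement that $T(x)$ have $0$-$1$ coefficients modulo $x^N-1$. A natural candidate is $T(x)=\prod_{d\in D}\Phi_d(x)$, where $D$ is a suitable collection of divisors of $N$ larger than $1$, chosen so that between $S(x)$ and $T(x)$ every nontrivial cyclotomic factor of $(x^N-1)/(x-1)$ is hit exactly once.

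I would first dispose of the prime power case $N=p^n$, in which \ref{t2} is vacuous. Here \ref{t1} forces $|S|=p^{|H_S|}$, and by Ma's Lemma (Lemma \ref{ma}) and induction on $n$ one shows that $S$ is a disjoint union of cosets of an appropriate subgroup; taking $T(x)=\prod_{1\le i\le n,\ p^i\notin H_S}\Phi_{p^i}(x)$ then yields the desired tiling.

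For the general case I would induct on the number of distinct prime divisors of $N$ (and secondarily on $N$ itself). Pick a prime $p\mid N$ and split according to whether $H_S$ contains a power of $p$. In either subcase, after an application of Ma's Lemma the problem reduces to a tiling problem on a smaller cyclic group, to which the inductive hypothesis applies; the resulting tile is then lifted back to $\Z_N$ via preimages under the canonical projection. Condition \ref{t2} plays the crucial role here: for distinct primes $p_1,\dots,p_k$ with $p_i^{a_i}\in H_S$, \ref{t2} forces $\Phi_{p_1^{a_1}\cdots p_k^{a_k}}\mid S(x)$, ensuring that the composite-order cyclotomic polynomials split cleanly between $S(x)$ and the prospective $T(x)$, with no cyclotomic factor missing or doubled.

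The main obstacle will be verifying that the constructed $T(x)$ actually has $0$-$1$ coefficients modulo $x^N-1$, since arbitrary products of cyclotomic polynomials can exhibit wild coefficient growth. Here \ref{t1} supplies the cardinality count $|T|=N/|S|$ matching the required support size, while a careful inductive coset-decomposition argument, again leaning on Ma's Lemma, controls the shape of $T$ and rules out cancellations or repetitions. Managing this coefficient control while simultaneously respecting the consistency conditions imposed by \ref{t2} across all prime pairs is where the bulk of the technical work lies.
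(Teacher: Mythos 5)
A remark before the substance: the paper does not prove Theorem \ref{thmt1t2} at all --- it is quoted from Coven and Meyerowitz \cite{CM1999} --- so your attempt has to be measured against their original argument. Your framing is correct and matches theirs: tiling by $T$ is equivalent to $S(x)T(x)\equiv 1+x+\dotsb+x^{N-1}\bmod(x^N-1)$ with $T(x)$ a $0$--$1$ polynomial, and once every $\Phi_d(x)$ with $1<d\mid N$ divides $S(x)T(x)$, condition \ref{t1} makes the congruence automatic because then $S(1)T(1)=N$. The genuine gap is the one you yourself flag and then defer: you supply no mechanism for the $0$--$1$ property of $T$, and your candidate $T(x)=\prod_{d\in D}\Phi_d(x)$, with $D$ the set of \emph{all} divisors $d>1$ of $N$ (composite included) for which $\Phi_d\nmid S(x)$, is not the right object. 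Cyclotomic polynomials of composite order have negative coefficients, and there is no reason the product over exactly the ``missing'' divisors should be a $0$--$1$ polynomial: for instance, a block such as $\Phi_7(x)\Phi_{35}(x)\Phi_{105}(x)$ --- which is what $D$ would contribute inside $\Z_{105}$ if $\Phi_{21}$ happened to divide $S(x)$ without any power of $7$ lying in $H_S$ --- equals $\Phi_7(x)\Phi_{35}(x^3)$ and has a coefficient $-1$ (at $x^9$). The Coven--Meyerowitz complement avoids this by using only \emph{prime-power} standard factors: $T(x)=\prod\Phi_{p^a}(x^{m_{p^a}})$ over the prime powers $p^a\mid N$ with $p^a\notin H_S$, each factor literally equal to $1+x^{m}+x^{2m}+\dotsb+x^{(p-1)m}$, with the exponents $m_{p^a}$ chosen so that the supports combine injectively (a mixed-radix argument); the composite-order cyclotomics are then covered --- possibly redundantly with $S$, which is harmless for the count --- via the identity $\Phi_n(x)\Phi_{np}(x)=\Phi_n(x^p)$ on the $T$ side and via \ref{t2} on the $S$ side. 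This choice of factors and the injectivity-of-supports argument are the missing ideas; no induction on the number of prime divisors is needed.

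Your appeals to Lemma \ref{ma} also do not go through. In the prime-power case the intermediate claim that $S$ is a disjoint union of cosets of a subgroup is false: for $N=p^2$ the set $S=\set{0,1,\dotsc,p-1}$ has $H_S=\set{p}$, satisfies \ref{t1}, and is not a union of cosets of the unique nontrivial proper subgroup. Fortunately the complement $T(x)=\prod_{p^i\notin H_S}\Phi_{p^i}(x)$ you propose there is still correct, and is a genuine set by uniqueness of base-$p$ expansions, so no structural statement about $S$ is needed in that case. In the inductive step, Lemma \ref{ma} requires $S(\xi_d)=0$ for \emph{every} $d$ with $p^m\mid d\mid N$, and \ref{t1} together with \ref{t2} do not supply this: \ref{t2} controls $\Phi_{p^aq^b}$ only when both $p^a$ and $q^b$ belong to $H_S$, and is silent otherwise. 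So the proposed reduction to a smaller cyclic group is not available as stated.
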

Note that $\Phi_{p^a}(1)=p$ for a prime $p$ and $\Phi_{k}(1)=1$ if
$k$ has at least two different prime divisors.

\section{Preliminary lemmas}\label{section6}
We introduce an extra notation for divisibility. Fix $N \in
\mathbb{N}$. For a natural number $k$ we write $\ell\pdiv_N k$ if
$\ell$ is the largest divisor of $N$, which divides $k$. In our case
$N$ will be $p^nq^2$ so we simply write $\ell\pdiv k$.

We review first the equations \eqref{Ladiff} and \eqref{ordroots}
for a spectral pair $(S,\La)$ in $\Z_N$. First, we define as usual
\[\Z_N^{\star}=\set{g\in \Z_N:\gcd(g,N)=1},\]
the group of \emph{reduced residues} $\bmod N$. It is precisely the
subset of elements of $N$ of order exactly $N$. Similarly, the
subset of $\Z_N$ of elements of order $N/d$, where $d\mid N$, is
\[d\Z_N^{\star}=\set{g\in\Z_N:\gcd(g,N)=d}.\]
The zero set \[Z(S)=\set{d\in\Z_N: S(\xi_N^d)=0}\] is then a union
of subsets of the form $d\Z_N^{\star}$, for some $d\mid N$, and
\eqref{Ladiff} and \eqref{ordroots} can be rewritten as
\begin{equation}\label{divclass}
\La-\La\ssq\set{0}\cup\bigcup_{d\mid N, S(\xi_N^d)=0}d\Z_N^{\star}.
\end{equation}
Of course, by Lemma \ref{lemeltol}\ref{itemduality}, the roles of
$S$ and $\La$ can be reversed.

\begin{de}
 Let $S\ssq \Z_N$. Recall that for every $j\in\Z$ and $d\mid N$, we define the following subsets
 \[S_{j\bmod d}=\set{s\in S:s\equiv j\bmod d}.\]
 We say that $S$ is \emph{equidistributed} $\bmod d$, if
 \[\abs{S_{j\bmod d}}=\frac{1}{d}\abs{S},\]
 for every $j$. Equivalently, every $\Z_{N/d}$-coset of $\Z_N$ contains the same amount of elements of $S$.
\end{de}

\begin{lem}\label{lemegyenessav}
\hspace{2em}
\begin{enumerate}
\item\label{itemegyenessava} Assume $\Phi_p(x)\mid S(x)$. Then every
$\Z_{N/p}$-coset of $\Z_N$ contains the same amount of elements of
$S$.
\item\label{itemegyenessavb} Assume $\Phi_k(x)\mid S(x)$ for every $1<k\mid d$. Then every
$\Z_{N/d}$-coset of $\Z_N$ contains the same amount of elements of
$S$.
\end{enumerate}
\end{lem}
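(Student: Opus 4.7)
The plan is to prove part (b) directly and observe that part (a) is the special case $d = p$, since every non-trivial $p$-th root of unity is automatically primitive, so the hypothesis $\Phi_p(x) \mid S(x)$ is exactly the hypothesis of (b) for $d = p$.

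The key step is the reduction $T(x) := S(x) \bmod (x^d - 1)$. Since $x^s \equiv x^{s \bmod d} \pmod{x^d - 1}$, the remainder has the form $T(x) = \sum_{j=0}^{d-1} a_j x^j$, where $a_j = |S_{j \bmod d}|$ counts the elements of $S$ lying in the coset $j + \Z_{N/d}$. (Recall that $\Z_{N/d}$, the unique subgroup of $\Z_N$ of order $N/d$, consists precisely of the multiples of $d$, so its cosets are the residue classes modulo $d$.) By hypothesis, $S(\xi) = 0$ for every primitive $k$-th root of unity with $1 < k \mid d$, and any $d$-th root of unity $\xi$ satisfies $T(\xi) = S(\xi)$ because $\xi^d = 1$. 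Since the non-trivial $d$-th roots of unity are exactly the primitive $k$-th roots of unity for $1 < k \mid d$, every such $\xi$ is a root of $T$.

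Equivalently, the polynomial $\prod_{1 < k \mid d} \Phi_k(x) = (x^d - 1)/(x - 1) = 1 + x + \cdots + x^{d-1}$ divides $T(x)$ in $\Q[x]$. Since $\deg T \le d - 1$, this forces $T(x) = c(1 + x + \cdots + x^{d-1})$ for some constant $c \in \Q$, which gives $a_0 = a_1 = \cdots = a_{d-1} = c$. Evaluating at $x = 1$ yields $c = |S|/d$, producing the desired equidistribution. I do not foresee a substantive obstacle; the argument is essentially a degree count combined with the cyclotomic factorization of $x^d - 1$, and the only point requiring care is the identification of a $\Z_{N/d}$-coset with a residue class modulo $d$.
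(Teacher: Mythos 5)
Your proof is correct and follows essentially the same route as the paper's proof of part (b): reduce $S(x)$ modulo $x^d-1$ so the coefficients become the counts $\abs{S_{j\bmod d}}$, observe that the hypothesis forces $1+x+\dotsb+x^{d-1}=\prod_{1<k\mid d}\Phi_k(x)$ to divide the reduction, and conclude the coefficients are constant. The only (harmless) difference is that you obtain part (a) as the special case $d=p$ of part (b), whereas the paper gives (a) a separate one-line argument via non-Pompeiu multisets on $\Z_p$; your unification is perfectly valid.
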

\begin{proof}
\begin{enumerate}
 \item $\Phi_p(x) \mid S(x)$ is equivalent to the fact that $S$ is a non-Pompeiu set with respect to an irreducible representation of order $p$, whose kernel is $\mathbb{Z}_{N/p}$.
 It is easy to see that a non-Pompeiu multiset on $\mathbb{Z}_p$ has to be constant we obtain the result.
\item Consider the formula
 \begin{equation}\label{Smodd}
  S(x)\equiv \sum_{j=0}^{d-1}\abs{S_{j\bmod d}}x^j\bmod(x^d-1),
 \end{equation}
 which holds for every $S\ssq\Z_N$. It holds $S(\xi_k)=0$ for every $1<k\mid d$ if and only if
\[1+x+\dotsb+x^{d-1}=\prod_{1<k\mid d}\Phi_k(x)\mid S(x),\]
or equivalently $S(x)=(1+x+\dotsb+x^{d-1})G(x)$. The latter implies
\[S(x)\equiv (1+x+\dotsb+x^{d-1})G(1)\bmod (x^d-1),\]
so by \eqref{Smodd}, we get $\abs{S_{j\bmod d}}=G(1)$ for all $j$.
Conversely, if $\abs{S_{j\bmod d}}=c$ for all $j$, then
\[S(x)\equiv c(1+x+\dotsb+x^{d-1})\bmod (x^d-1),\]
due to \eqref{Smodd}, which easily gives $S(\xi_k)=0$ for every
$1<k\mid d$, as desired. \qedhere
\end{enumerate}
\end{proof}

Let $(S,\La)$ be a spectral pair in $\Z_N$ satisfying the conditions
of Reduction \ref{summary}, where $N=p^nq^2$. An immediate
consequence of Reduction \ref{summary}\ref{enumsummaryb} is that
$S-S$ contains the difference set of both a $\Z_p$-coset and a
$\Z_q$-coset, thus
 \[\frac{N}{p}\Z_N\cup\frac{N}{q}\Z_N\subseteq S-S,\]
 whence
 \begin{equation}\label{Bpqroots}
\La(\xi_p)=\La(\xi_q)=0,
\end{equation}
by \eqref{ordroots},
 and we obtain in particular, 
\begin{equation}\label{ed}
\abs{\La_{i\bmod p}}=\frac{1}{p}\abs{\La},\ \abs{\La_{j\bmod
q}}=\frac{1}{q}\abs{\La},
\end{equation}
for all $i,j$, by Lemma \ref{lemegyenessav}. This shows that $pq$
divides $\abs{S}=\abs{\La}$.

\section{Proof of Theorem \ref{thmfotetel}}\label{sectiont2}

A significant special case will be shown first.

\begin{lem}\label{q2divides}
Let $S\subseteq\Z_N$ be spectral. If $q^2\mid \abs{S}$, then $S$
tiles $\Z_N$.
\end{lem}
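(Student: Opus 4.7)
The plan is to apply the Coven--Meyerowitz criterion (Theorem \ref{thmt1t2}). Write $|S|=p^a q^2$ for some $0\le a\le n$. By Reduction \ref{summary} and \eqref{Bpqroots} (applied to $S$ via duality, Lemma \ref{lemeltol}\ref{itemduality}), we already know $\Phi_p(x), \Phi_q(x)\mid S(x)$, so $p,q\in H_S$. For (T1) with $|S|=p^a q^2$, $H_S$ must contain exactly $a$ prime powers of $p$ together with both $q$ and $q^2$; the decisive missing ingredient is thus $\Phi_{q^2}(x)\mid S(x)$.

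By Observation \ref{obsorder}, establishing $\Phi_{q^2}\mid S(x)$ amounts to producing $\la_1,\la_2\in\La$ with $\la_1-\la_2\in\Z_{q^2}\setminus\Z_q$ --- equivalently, two elements of $\La$ lying in a common $\Z_{q^2}$-coset of $\Z_N$ but in distinct $\Z_q$-subcosets of that coset. I would analyze $\La\cap C$ for each $\Z_{q^2}$-coset $C$, using that $\La$ is a disjoint union of $\Z_p$- and $\Z_q$-cosets (Reduction \ref{summary}\ref{enumsummarya}): each $\Z_q$-coset of $\La$ lies entirely in one $\Z_{q^2}$-coset and fills a single $\Z_q$-subcoset of it, while each $\Z_p$-coset of $\La$ meets $p$ distinct $\Z_{q^2}$-cosets, contributing exactly one point to each. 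If no $C$ saw $\La$ spread across two $\Z_q$-subcosets, then $|\La|\le q\cdot p^n$, contradicting $|\La|=p^a q^2$ whenever $q>p^{n-a}$. In the complementary range $p^{n-a}\ge q$, where cardinality alone is insufficient, I would combine the divisibility $q\mid n_P$ (the number of $\Z_p$-cosets of $\La$, forced by $q^2\mid|\La|$ via $|\La|=pn_P+qn_Q$) with the guaranteed mixing $\Z_{pq}$-coset from Reduction \ref{summary}\ref{enumsummaryb}, and use the duality between $S$ and $\La$ to force the desired pair.

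Once $\Phi_{q^2}\mid S(x)$ is secured, Lemma \ref{lemegyenessav}\ref{itemegyenessavb} yields equidistribution of $S$ mod $q^2$, and an analogous inductive argument on the $p$-power side (where Fuglede's conjecture holds on proper factor groups by Reduction \ref{summary}\ref{enumsummaryc}) produces the remaining $\Phi_{p^i}\mid S(x)$ needed for (T1). Condition (T2) --- the mixed divisibilities $\Phi_{p^iq^j}\mid S(x)$ for $p^i,q^j\in H_S$ --- can then be read off from Theorem \ref{vansumspos} applied to $S$ with its now-rich root structure. Theorem \ref{thmt1t2} concludes that $S$ tiles $\Z_N$. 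The main obstacle is the residual case $p^{n-a}\ge q$ in the proof of $\Phi_{q^2}\mid S(x)$: the crude counting used in the generic case is not tight there, and one must exploit finer interaction between $S$ and its spectrum.
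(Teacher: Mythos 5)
Your overall target --- verifying the Coven--Meyerowitz conditions \ref{t1} and \ref{t2} via Theorem \ref{thmt1t2} --- is the same as the paper's, but the path you sketch has gaps at every decisive point, and the central one is fatal as stated. First, the opening step ``write $\abs{S}=p^aq^2$'' is not available: for a spectral set one does not know a priori that $\abs{S}$ divides $N$, nor that its only prime factors are $p$ and $q$; the paper \emph{derives} $\abs{S}=p^kq^2$ (with $k=\abs{H_{\La}(p)}$) by squeezing $\abs{S}$ between an upper bound $p^kq^2$ and the divisibility $p^kq^2\mid\abs{S}$. Second, and more seriously, your route to $\Phi_{q^2}\mid S(x)$ closes only in the range $q>p^{n-a}$, and you concede that the complementary range is ``the main obstacle''; the ingredients you propose for it ($q\mid n_P$, the mixing coset, ``duality'') do not constitute an argument. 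The paper avoids this dichotomy entirely: by \eqref{divclass} the differences within each $S_{j\bmod q^2}$ lie in $\bigcup_i\frac{N}{p^{m_i}}\Z_N^{\star}$ where $H_{\La}(p)=\set{p^{m_1},\dotsc,p^{m_k}}$, so a $p$-adic digit (pigeonhole) argument gives $\abs{S_{j\bmod q^2}}\leq p^k$ for every $j$; combined with $p^k\mid\abs{\La}$ (from $\prod_i\Phi_{p^{m_i}}\mid\La$) and the hypothesis $q^2\mid\abs{S}$, this forces $\abs{S_{j\bmod q^2}}=p^k$ for all $j$, i.e.\ equidistribution $\bmod q^2$, whence $S(\xi_q)=S(\xi_{q^2})=0$ by Lemma \ref{lemegyenessav} with no case split and no restriction on the relative sizes of $p$ and $q$.

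Even granting $\Phi_q\Phi_{q^2}\mid S(x)$, the remaining two thirds of the work are not supplied. For \ref{t1} one must exhibit $k$ distinct prime powers $p^{i}$ with $\Phi_{p^i}\mid S(x)$; your ``analogous inductive argument on the $p$-power side'' does not say what is being inducted on, and this is precisely where the paper needs its most delicate construction: it projects a fixed $S_{j\bmod q^2}$ injectively to $\Z_{p^n}$, pairs the image $S'$ with the set $\La'$ whose mask polynomial is $\prod_i\Phi_{p^{m_i}}(x)$, verifies that $(S',\La')$ is a spectral pair in $\Z_{p^n}$, and extracts $\prod_i\Phi_{p^{n-m_i+1}}(x)\mid S_{j\bmod q^2}(x)$, hence $H_S=\set{p^{n-m_k+1},\dotsc,p^{n-m_1+1},q,q^2}$. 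Condition \ref{t2} likewise does not simply ``read off'' from Theorem \ref{vansumspos}: the paper proves it by writing $S_{j\bmod q^2}(x)\equiv x^jF(x^{q^2})\bmod(x^N-1)$ and using $\gcd(q^2,p^{n-m_i+1})=1$ to transfer the cyclotomic divisibility from $F(x^{q^2})$ to $F(x)$ and thence to the mixed roots $\xi_{p^{n-m_i+1}q^{\ell}}$. In short, the one concrete argument you give covers only a proper subcase of the first of three required steps.
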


\begin{proof}
Let $H_S(p)=\set{p^m: S(\xi_{p^m})=0, 1\leq m\leq n}$, and similarly
define $H_{\La}(p)$, for a spectrum $\La\subseteq\Z_N$. Suppose that
\[H_{\La}(p)=\set{p^{m_1},\dotsc,p^{m_k}},\]
where $1\leq m_1<m_2<\dotsb<m_k\leq n$. For every $j$, it holds
\begin{equation}\label{ajdiff}
S_{j\bmod q^2}-S_{j\bmod q^2}\subseteq (S-S)\cap
q^2\Z_N\subseteq\set{0}\cup \bigcup_{i=0}^k
\frac{N}{p^{m_i}}\Z_N^{\star},
\end{equation}
by \eqref{divclass}. Consider the $p$-adic expansion of every $s\in
S$ taken $\bmod p^n$, as follows
\[s\equiv s_0+s_1p+\dotsb+s_{n-1}p^{n-1}\bmod p^n,\ 0\leq s_i\leq p-1,\ 0\leq i\leq n-1.\]
Due to \eqref{ajdiff}, the elements of each $S_{j\bmod q^2}$ cannot
have the same $p$-adic digits corresponding to $p^{n-m_i}$, $1\leq
i\leq k$, yielding
\[\abs{S_{j\bmod q^2}}\leq p^k,\    0\leq j<q^2,\]
thus, $\abs{S}\leq p^kq^2$. On the other hand, we have
\[\prod_{i=1}^k\Phi_{p^{m_i}}(x)\mid \La(x),\]
and putting $x=1$ we obtain $p^k\mid \abs{\La}$; we then get by
hypothesis $p^kq^2\mid \abs{S}$, whence $\abs{S}=p^kq^2$, and
\[\abs{S_{j\bmod q^2}}= p^k,\   0\leq j<q^2.\]
Since $S$ is equidistributed $\bmod q^2$, we must also have
$S(\xi_q)=S(\xi_{q^2})=0$ by Lemma \ref{lemegyenessav}. We note that
each element of $S_{j\bmod q^2}$ is unique $\bmod p^n$, so the
reduction $\bmod p^n$ map
\[\pi:\Z_N\mapsto \Z_{p^n}\]
is injective on each $S_{j\bmod p^n}$; fix some $j$, and let
$\pi(S_{j\bmod p^n})=S'$. Since $q^2\mid s-s'$ for every $s,s'\in
S_{j\bmod q^2}$, we conclude that the order of $s-s'$ in $\Z_N$ is
the same as the order of $\pi(s-s')$ in $\Z_{p^n}$, which gives
\[S'-S'\ssq\set{0}\cup\bigcup_{i=0}^k p^{n-m_i}\Z_{p^n}^{\star}.\]
Consider now the set $\La'\ssq\Z_{p^n}$ whose mask polynomial is
given by
\[\La'(x)\equiv \prod_{i=1}^k\Phi_{p^{m_i}}(x)\bmod(x^{p^n}-1).\]
We have $\abs{S'}=\abs{\La'}=p^k$ and
\[S'-S'\ssq\set{0}\cup\set{d\in\Z_{p^n}:\La'(\xi_{p^n}^d)=0},\]
therefore, $(S',\La')$ is a spectral pair in $\Z_{p^n}$ by
\eqref{divclass}. Since
\[\Phi_{p^{m_i}}(x)=1+x^{p^{m_i-1}}+x^{2p^{m_i-1}}+ \ldots + x^{(p-1)p^{m_i-1}}\]
we obtain
\[(\La'-\La')\cap p^{n-m_i+1}\Z_{p^n}^{\star}\neq\vn,\  1\leq i\leq k,\]
therefore,
\[\bigcup_{i=0}^k p^{n-m_i+1}\Z_{p^n}^{\star}\ssq \set{d\in \Z_{p^n}:S'(\xi_{p^n}^d)=0},\]
by \eqref{divclass}, or equivalently
\[\prod_{i=0}^k\Phi_{n-m_i+1}(x)\mid S_{j\bmod q^2}(x),\]
since
\[S_{j\bmod q^2}(x)\equiv S'(x)\bmod(x^{p^n}-1).\]
Moreover, by $S(x)=\sum_{j=0}^{q^2-1}S_{j\bmod q^2}(x)$ and
$\abs{S}=p^kq^2$, we conclude that
\[H_S=\set{p^{n-m_k+1},\dotsc,p^{n-m_1+1},q,q^2},\]
hence $S$ satisfies \ref{t1}.

Consider next the polynomial $F(X)$ satisfying
\[S_{j\bmod q^2}(x)\equiv x^jF(x^{q^2})\bmod(x^N-1),\]
for a fixed $j$. Since $\Phi_{p^{n-m_i+1}}(x)\mid F(x^{q^2})$ for
all $1\leq i \leq k$ and $q^2$ is prime to $p^{n-m_i+1}$, we also
get that $\Phi_{p^{n-m_i+1}}(x)\mid F(x)$. Therefore, for $\ell=1$
or $2$ we get
\[S_{j\bmod q^2}(\xi_{p^{n-m_i+1}q^{\ell}})=\xi_{p^{n-m_i+1}q^{\ell}}^jF(\xi_{p^{n-m_i+1}q^{\ell}}^{q^2})=\xi_{p^{n-m_i+1}q^{\ell}}^jF(\xi_{p^{n-m_i+1}}^{q^{2-\ell}})=0,\]
for all $j$, which shows that $S$ satisfies \ref{t2}. This completes
the proof.
\end{proof}

We distinguish now the following cases:

\vspace{3mm} \noindent $\boxed{S(\xi^q_N)=S(\xi_N^{q^2})=0}$ Then,
since $S(\xi_N)=0$ by Corollary \ref{Nroot}, $S$ is a union of
$\Z_p$-cosets by Lemma \ref{ma} and $S$ tiles due to Reduction
1\ref{enumsummaryb}.

\vspace{3mm} \noindent $\boxed{S(\xi_N^q)S(\xi_N^{q^2})\neq0}$
Consider the difference sets $\La_{j\bmod q}-\La_{j\bmod q}$. They
are always subsets of $(\La-\La)\cap q\Z_N$, but since they avoid
$q\Z_N^{\star}\cup q^2\Z_N^{\star}$ in this case by
\eqref{divclass}, we get
\[\La_{j\bmod q}-\La_{j\bmod q}\subseteq pq\Z_N,\]
for all $j$. This shows that every element of $\La_{j \bmod q}$ has
the same remainder $\bmod p$, or equivalently, for every $j$ there
is an $i=i(j)$ such that
\[\La_{j\bmod q}\subseteq \La_{i(j)\bmod p}.\]
This, in particular, shows that $p<q$, and that every $\La_{i\bmod
p}$ is the disjoint union of sets of the form $\La_{j\bmod q}$,
namely
\[\La_{i\bmod p}=\bigcup_{i(j)=i}\La_{j\bmod q}.\]
Suppose that the number of sets appearing in the union are $\ell$.
Then, the above equation along with \eqref{ed} implies $1/p=\ell/q$,
which leads to a contradiction (no such spectrum can exist).

\vspace{3mm} \noindent $\boxed{S(\xi_N^q)=0\neq S(\xi_N^{q^2})}$ We
apply Theorem \ref{vansumspos} to $S(x)\bmod (x^{N/q}-1)$. We obtain
\[S(x)\equiv P(x)\Phi_p(x^{N/pq})+Q(x)\Phi_q(x^{N/q^2})\bmod(x^{N/q}-1),\]
since $S(\xi_{N/q})=0$, where $P(x)$ and $Q(x)$ have nonnegative
coefficients. Furthermore, since $S(\xi_N^{q^2})\neq0$, we cannot
have $Q\equiv 0$. Due to the nonnegativity of $P$ and $Q$, we obtain
the existence of $s,s'\in S$ such that
\[s-s'\equiv\frac{N}{q^2}\bmod\frac{N}{q},\]
hence $p^n\mid s-s'$ but $q\nmid s-s'$, yielding $s-s'\in
p^n\Z_N^{\star}$ and
\[\La(\xi_{q^2})=0,\]
which further gives $q^2\mid \abs{\La}$, so by Lemma
\ref{q2divides}, $S$ tiles $\Z_N$.

\vspace{3mm} \noindent $\boxed{S(\xi_N^q)\neq0=S(\xi_N^{q^2})}$ We
will prove the following:
\begin{clm}
$(S-S)\cap\frac{N}{pq^2}\Z_N^{\star}\neq\vn$.
\end{clm}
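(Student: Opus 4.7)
The plan is to prove this by contradiction: assuming $(S-S)\cap p^{n-1}\Z_N^{\star}=\varnothing$, I will derive $S(\xi_N^q)=0$, contradicting the case hypothesis. Using the CRT isomorphism $\Z_N\cong\Z_{p^n}\times\Z_{q^2}$ and writing each $s\in\Z_N$ as a pair $(s_1,s_2)$, a $\Z_p$-coset takes the form $\{(a+kp^{n-1},b):0\le k<p\}$, a $\Z_q$-coset the form $\{(a,b+\ell q):0\le\ell<q\}$, and an element of $p^{n-1}\Z_N^{\star}$ corresponds to a pair $(p^{n-1}\nu,\mu)$ with $1\le\nu<p$ and $q\nmid\mu$. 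Consequently, the failure of the claim is equivalent to the following: for any $s,t\in S$ with $s_1\equiv t_1\bmod p^{n-1}$ but $s_1\ne t_1$, one has $s_2\equiv t_2\bmod q$.

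The key structural input is the case assumption $S(\xi_N^{q^2})=S(\xi_{p^n})=0$. In the coset decomposition of $S$, each $\Z_p$-coset contributes $0$ to $S(\xi_{p^n})$ (since $\sum_{k=0}^{p-1}\xi_p^k=0$), so only $\Z_q$-cosets survive. Writing $Q_a\subseteq\Z_q$ for the set of labels $b$ such that the $\Z_q$-coset through $(a,b)$ belongs to $S$, this yields $\sum_{a\in\Z_{p^n}}|Q_a|\xi_{p^n}^a=0$; by Lemma~\ref{ma} applied in $\Z_{p^n}$, $|Q_a|$ depends only on $a\bmod p^{n-1}$. Combining this invariance with the failure of the claim fibrewise: if the fibre $F_u:=u+p^{n-1}\Z_{p^n}$ contains any $\Z_q$-coset of $S$, then $|Q_a|\ge 1$ for every $a\in F_u$; for any two distinct $a,a'\in F_u$, the failure hypothesis forces every label in $Q_a$ to coincide with every label in $Q_{a'}$, from which $|Q_a|=1$ for all $a\in F_u$ with a single common label $b^*(u)\in\Z_q$.

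The contradiction now comes from evaluating $S(\xi_N^q)=S(\xi_{p^nq})$. By CRT one can write $\xi_{p^nq}^s=\chi_1(s_1)\chi_2(s_2\bmod q)$ for primitive characters $\chi_1,\chi_2$ of $\Z_{p^n},\Z_q$ respectively. Each $\Z_p$-coset again contributes $0$, since $\chi_1(p^{n-1})$ is a primitive $p$-th root of unity; for each fibre $F_u$ carrying a $\Z_q$-coset of $S$, the total contribution from those $\Z_q$-cosets is
\[
q\chi_2(b^*(u))\chi_1(u)\sum_{k=0}^{p-1}\chi_1(p^{n-1})^k = 0,
\]
while empty fibres contribute nothing. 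Summing over $u$, $S(\xi_N^q)=0$, the desired contradiction. The main obstacle is the structural step: simultaneously using the $p^{n-1}$-periodicity of $|Q_a|$ coming from Lemma~\ref{ma} and the geometric restriction imposed by the failure of the claim, to force each fibre to carry a $\Z_q$-coset at every first-coordinate position and share a common second-coordinate label; once this is in place the vanishing calculation is formal.
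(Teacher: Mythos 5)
Your proof is correct, and it reaches the same contradiction as the paper ($S(\xi_{p^nq})=0$ versus the case hypothesis $S(\xi_N^q)\neq 0$) from the same structural input (the $p^{n-1}$-periodicity of the fibre data forced by $S(\xi_N^{q^2})=S(\xi_{p^n})=0$), but the bookkeeping is genuinely different. The paper works purely with the residue classes $S_{i\bmod p^n}$ and $S_{i+\ell p^n\bmod p^nq}$: it shows via Theorem \ref{vansumspos} that if every class $\bmod\ p^n$ met $S$ in only one class $\bmod\ p^nq$ in a compatible way, then $qS$ would be a union of $\Z_p$-cosets and $S(\xi_{p^nq})=0$; hence some class splits, and it then exhibits an explicit triple $s,s',s''$ with $p^{n-1}\pdiv s-s'$, $p^{n-1}\pdiv s-s''$ and $q\nmid s''-s'$, one of which lands in $p^{n-1}\Z_N^{\star}$. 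You instead negate the claim globally, invoke the $\Z_p$/$\Z_q$-coset decomposition of Reduction \ref{summary}\ref{enumsummarya} together with Lemma \ref{ma} in $\Z_{p^n}$, show that each fibre $u+p^{n-1}\Z_{p^n}$ then carries exactly one $\Z_q$-coset label constant across the fibre, and kill $S(\xi_{p^nq})$ by a direct character sum over each fibre. Your version buys a cleaner, more conceptual contradiction (no explicit triple of elements needs to be extracted, and the final vanishing is a one-line geometric series), at the cost of importing the coset decomposition as an extra hypothesis; the paper's version is self-contained at the level of mask polynomials and residue counts and produces the witnessing difference constructively. Both are complete; the one point worth making explicit in your write-up is that the labels $Q_a$ depend on a fixed choice of decomposition (a full $\Z_{pq}$-coset can be split either way), which is harmless since you only ever use one fixed decomposition throughout.
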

\begin{proof}[Proof of Claim]
By Theorem \ref{vansumspos}, the multiset\footnote{Here, we consider
the elements $q^2 s\bmod N$, $s\in S$, counting multiplicities. For
example, if $N=4$ and $S=\set{0,2}$, then $2S$ is the multiset whose
only element is $0$, appearing with multiplicity $2$.} $q^2 S$ is a
union of $\Z_p$-cosets, or equivalently
\begin{equation}\label{eqclaim11}
\abs{S_{i\bmod p^n}}=\abs{S_{i+kp^{n-1}\bmod p^n}},\end{equation}
for every $i,k$. We partition the above sets $\bmod p^nq$:
\[S_{i\bmod p^n}=\bigcup_{\ell=0}^{q-1}S_{i+\ell p^n\bmod p^nq},\]
and
\[S_{i+kp^{n-1}q\bmod p^n}=\bigcup_{\ell=0}^{q-1}S_{i+kp^{n-1}q+\ell p^n\bmod p^nq}.\]
If for every $i$ existed some $\ell$ such that
\[S_{i+kp^{n-1}q\bmod p^n}=S_{i+kp^{n-1}q+\ell p^n\bmod p^nq},\]
for every $k$, then $qS$ would also be a union of $\Z_p$-cosets.
Indeed, as for every $i$ there is at most one value of
$0\leq\ell\leq q-1$ such that $S_{i+\ell p^n\bmod p^nq}\neq\vn$, and
by the above condition the cardinalities of $S_{i+kp^{n-1}q+\ell
p^n\bmod p^nq}$ are the same for $0\leq k\leq p-1$. Therefore,
$S(\xi_{p^nq})=0$ by Theorem \ref{vansumspos} (or equivalently by
Proposition \ref{propsqfree}),
contradicting the hypothesis. Thus, there exists $i$ such that there
are nonempty $S_{i+\ell p^n\bmod p^nq}$ and $S_{i+\ell' p^n\bmod
p^nq}$, with $0\leq \ell<\ell'\leq q-1$. Clearly, $S_{i+ \ell
p^n\bmod p^nq} \subseteq S_{i \bmod p^n}$, so $S_{i \bmod p^n}$ is
nonempty. Using \eqref{eqclaim11} we have $S_{i+p^{n-1}\bmod p^n}$
is nonempty.

Now let $s\in S_{i+p^{n-1}\bmod p^n}$, $s'\in S_{i+\ell p^n\bmod
p^nq}$ and $s''\in S_{i+\ell' p^n\bmod p^nq}$, so that $p^{n-1}\pdiv
s-s'$ and $p^{n-1}\pdiv s-s''$. Since
$s''-s'\equiv(\ell'-\ell)p^n\bmod p^nq$, we get $q\nmid s''-s'$, so
either $q\nmid s-s'$ or $q\nmid s-s''$ would hold, yielding
$(S-S)\cap p^{n-1}\Z_N^{\star}\neq\vn$, as desired.
\end{proof}
This implies
\begin{equation}\label{Brootpq2}
\La(\xi_{pq^2})=0,
\end{equation}
by \eqref{ordroots}. If $\La(\xi_{q^2})=0$ then we would have
$q^2\mid \abs{S}$ and $S$ would tile $\Z_N$ by virtue of Lemma
\ref{q2divides}. So, we may assume $\La(\xi_{q^2})\neq0$.

By \eqref{Brootpq2} and Theorem \ref{vansumspos} we get
\[\La(x)\equiv\sum_{j=0}^{pq^2-1}\abs{\La_{j\bmod pq^2}}x^j\equiv P(x)\Php+Q(x)\Phq\bmod(x^{pq^2}-1),\]
for some $P(x),Q(x)\in\Z_{\geq0}[x]$ and $P(x)\not\equiv0$ by
$\La(\xi_{q^2})\neq0$. We note that the function
$f(j)=\abs{\La_{j\bmod pq^2}}$ restricted on a $\Z_{pq}$-coset of
$\Z_{pq^2}$ is supported either on a $\Z_p$-coset or a $\Z_q$-coset;
otherwise, there would exist $\la\in \La_{j\bmod pq^2}$ and
$\la'\in \La_{j'\bmod pq^2}$ where $j,j'$ satisfy
\[j-j'\in \out{q}\Z_{pq^2}^{\star}.\]
This shows that $\out{q}\pdiv \la-\la'$ and $\out{p}\nmid \la-\la'$,
thus $\la-\la'\in \out{q}\Z_N^{\star}$
 and $S(\xi_N^{\out{q}})=0$ by \eqref{divclass}, contradicting the hypothesis.

Next, consider a nonempty subset $\La_{j\bmod pq^2}$; the
polynomials with nonnegative coefficients $P(x)\Php$ and $Q(x)\Phq$
contribute to the coefficient of $x^j$ of $\La(x)\bmod(x^{pq^2}-1)$.
If both contributions are positive, then all subsets
$\La_{j+kq^2\bmod pq^2}$ and $\La_{j+\ell pq\bmod pq^2}$ are
nonempty, for $0<k<p$ and $0<\ell<q$. Then, for $\la\in
\La_{j+q^2\bmod pq^2}$ and $\la'\in \La_{j+pq\bmod pq^2}$, we have
$q\pdiv \la-\la'$, hence $\la-\la'\in q\Z_N^{\star}$, which
contradicts $S(\xi_N^q)\neq0$, due to \eqref{ordroots}.

Let $\Gamma(x)$ be $\La(x)\bmod(x^{pq^2}-1)$. The previous argument
shows that the coefficient of $x^j$ of $\Gamma(x)$ is determined
completely either from $P(x)\Php$ or $Q(x)\Phq$. Moreover, if $q
\pdiv j-j'$, then we cannot have that both the coefficients of $x^j$
and $x^{j'}$ in $\Gamma(x)$ are nonzero by the same argument. This
means that $f(j)=\abs{\La_{j\bmod pq^2}}$ restricted on a
$\Z_{pq}$-coset of $\Z_{pq^2}$ is supported either on a $\Z_p$-coset
or a $\Z_q$-coset and constant restricted to this coset.

This shows that for each $j$ such that $\La_{j\bmod pq^2}\neq\vn$,
either
\begin{equation}\label{absorption}
\abs{\La_{j+kq^2\bmod pq^2}}=\frac{1}{p}\abs{\La_{j\bmod
q^2}}=\frac{1}{p}\abs{\La_{j\bmod q}},\    0\leq k<p,
\end{equation}
or
\begin{equation}\label{equidistribution}
 \abs{\La_{j+\ell pq\bmod pq^2}}=\abs{\La_{j+\ell pq\bmod q^2}}=\frac{1}{q}\abs{\La_{j\bmod q}},\   0\leq \ell<q
\end{equation}
holds. If \eqref{equidistribution} holds for some $j$, then $q^2\mid
\abs{\La}$, so by Lemma \ref{q2divides} we get that $S$ tiles
$\Z_N$. Therefore, we may assume that \eqref{absorption} holds for
all $j$ with $\La_{j\bmod pq^2}\neq\vn$. For such $j$, we have
\[\La_{j\bmod q}-\La_{j\bmod q}=\La_{j\bmod q^2}-\La_{j\bmod q^2}\subseteq (\La-\La)\cap q^2\Z_N,\]
hence $\La-\La$ completely avoids $q\Z_N\setminus q^2\Z_N$. On the
other hand, $(S-S)\cap p^n\Z_N^{\star}=\vn$ by \eqref{divclass} and
the assumption $\La(\xi_{q^2})\neq0$, hence the polynomials
\[\ol{S}(x)\equiv S(x)\Phi_q(x^{p^n})\bmod(x^N-1)\]
and
\[\ol{\La}(x)\equiv \La(x)\Phq\bmod(x^N-1)\]
 are mask polynomials of subsets of $\Z_N$, say $\ol{S}$ and
 $\ol{\La}$, i.e. their coefficients are $0$ or $1$. We
 claim that $(\ol{S},\ol{\La})$ is a spectral pair. They
 obviously have the same cardinality of $q |S|$, and an element of $\ol{\La}-\ol{\La}$ can be expressed as $\la-\la'+lN/q$, where $\la,\la'\in \La$, $\abs{l}<q$.

If $\la-\la'\in p^k\Z_N^{\star}$, then $q\nmid \la-\la'+lN/q$, hence
$\la-\la' +lN/q \in p^k\Z_N^{\star}$ as well, yielding
$S(\xi_N^{\la-\la'+lN/q})=0$, since $N/q=p^nq$.

The remaining case is $\la-\la'\in p^kq^2\Z_N^{\star}$,
 where $0\leq k\leq n-1$, as $\La-\La$ avoids $q\Z_N\setminus
 q^2\Z_N$. In this case, $\la-\la'+lN/p\in
 p^kq\Z_N^{\star}$ if $1 \le l \le q-1$, and
 $\Phi_q(\xi_{p^kq}^{p^n})=\Phi_q(\xi_q^{p^{n-k}})=0$, so
 \[\ol{S}(\xi_N^{\la-\la'+lN/p})=0.\]
 If $l=0$, then clearly $\la-\la' \in \lala$.
 Considering all of these cases we have $\ol{\La}-\ol{\La}\subseteq\set{0}\cup Z(\ol{S})$, proving that the pair $(\ol{S},\ol{\La})$ is spectral by virtue of \eqref{divclass}.
 Since $q^2\mid\ol{S}$ we have $\ol{S}$ tiles $\Z_N$ by Lemma \ref{q2divides}, thus there is $T\subseteq\Z_N$ such that
 \[S(x)\Phi_q(x^{p^n})T(x)\equiv \ol{S}(x)T(x)\equiv 1+x+\dotsb+x^{N-1}\bmod (x^N-1),\]
 so $\Phi_q(x^{p^n})T(x)$ is the mask polynomial of a tiling complement of $S$ using Lemma 1.3 in \cite{CM1999}, completing the proof. \qed




\section{Appendix}
\begin{Thm}\label{ThmZp2}
Let $S$ be a subset of $\Z_p^2$. Then $S$ tiles $\Z_p^2$ if and only
if $S$ is spectral.
\end{Thm}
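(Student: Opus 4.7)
Identify $\widehat{\Z_p^2}\cong\Z_p^2$, let $n=|S|$, and after translation (Lemma \ref{lemeltol}) assume $0\in S$ and, whenever a spectrum $\La$ exists, $0\in\La$. The cases $n\in\set{1,p^2}$ are trivial in both directions, so I restrict to $1<n<p^2$.

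For \emph{tile $\Rightarrow$ spectral}, a tile $S$ satisfies $n\mid p^2$, hence $n=p$. Writing $\Z_p^2=S+T$ with $0\in T$, R\'edei's theorem (Theorem \ref{redei}) forces one of $S,T$ to be a subgroup of order $p$. If $S=H$ is such a subgroup, any transversal $\La$ of $H^\perp\le\widehat{\Z_p^2}$ is a spectrum: for distinct $\la,\la'\in\La$ the character $\chi_{\la-\la'}$ is nontrivial on $H$, so $\sum_{h\in H}\chi_{\la-\la'}(h)=0$. If instead $T$ is a subgroup, the dual choice $\La=T^\perp$ is a spectrum for the transversal $S$.

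For \emph{spectral $\Rightarrow$ tile}, the key tool is the following \emph{dichotomy}: for each order-$p$ subgroup $L\le\Z_p^2$ (equivalently, each line through $0$ in $\mathbb{F}_p^2$), either $L^\perp\setminus\set{0}\ssq Z(S)$---in which case I call $L$ \emph{good for $S$} and $S$ has constant intersection $n/p$ with every $L$-coset---or $L^\perp\cap Z(S)=\vn$. To establish this, for $v\in L^\perp$ factor $\chi_v$ through the cyclic quotient $\Z_p^2/L\cong\Z_p$: setting $f_L([g])=|S\cap([g]+L)|$ yields $\sum_{s\in S}\chi_v(s)=\hat{f}_L(\bar\chi_v)$. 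Since $f_L$ is integer-valued on $\Z_p$ and $\Phi_p$ is irreducible over $\Q$, one nontrivial Fourier coefficient of $f_L$ vanishes iff all do, iff $f_L$ is constant. Consequently $n>1$ forces some good $L$, so $p\mid n$; moreover $\La\ssq\set{0}\cup\bigcup_{L\text{ good}}(L^\perp\setminus\set{0})$, where the $L^\perp$'s pairwise meet only at $0$. The case $n=p$ is then immediate: the good $L$ gives $n/p=1$, so $S$ is a transversal of $L$ and $S+L=\Z_p^2$ is a tiling.

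It remains to rule out $n=cp$ with $2\le c\le p-1$, since in that range $n\nmid p^2$ precludes tiling. If every line through $0$ were good, $Z(S)=\Z_p^2\setminus\set{0}$ would make $1_S$ constant, contradicting $0<n<p^2$. If $\La$ sat on a single good line, $|\La|\le p<cp$. Otherwise $\La$ has nonzero elements on two distinct good lines $L_1^\perp, L_2^\perp$; the dichotomy then forces every cross-difference $\la_1-\la_2$ (with $\la_i\in L_i^\perp\cap\La\setminus\set{0}$) to lie on a third line which is again good. Iterating this \emph{closure property}, together with the count $\sum n_i=cp-1$ of nonzero elements of $\La$ distributed among the active good lines (each with at most $p-1$ elements and at most $p+1$ good lines available), leads to multiplicative scaling constraints $A_j=r_{ij}A_i$ among the ``angular'' supports $A_i\ssq\mathbb{F}_p^\times$ of $\La$ on the active lines. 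Pushing these through yields $A_i=-A_i$ for all active $i$---hence $|A_i|$ even---contradicting the parity forced by $\sum n_i=cp-1$ together with $p$ odd. The main obstacle is carrying out this closure/parity analysis in a way that handles every configuration with $3\le k\le p$ good lines uniformly.
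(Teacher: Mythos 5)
Your \emph{tile $\Rightarrow$ spectral} argument and your equidistribution dichotomy are correct and coincide with the paper's (R\'edei's theorem for the tiling direction; the irreducibility of $\Phi_p$ over $\Q$ for the dichotomy), and the case $|S|=p$ is handled identically. The genuine gap is the one you name yourself: the case $|S|=cp$ with $2\le c\le p-1$ is not actually ruled out, and the closure/parity analysis of ``angular supports'' that you sketch is both unfinished and unnecessary. The structural weakness is that by working with $\La\ssq\set{0}\cup\bigcup_{L\ \mathrm{good}}\left(L^\perp\setminus\set{0}\right)$ you only exploit the differences $\la-0$; the spectral condition gives all of $\La-\La\ssq\set{0}\cup Z(S)$, and that is what closes the argument in one line.

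Concretely: fix any line $M$ through $0$ and consider the parallel class of its $p$ cosets. Since $|\La|=cp\ge 2p>p$, some coset of $M$ contains two distinct $\la,\la'\in\La$, so $\la-\la'\in\left(M\setminus\set{0}\right)\cap Z(S)$, and your dichotomy applied to $L=M^\perp$ (so that $L^\perp=M$) makes $M^\perp$ good. As $M$ runs over all $p+1$ lines through $0$, so does $M^\perp$; hence \emph{every} line is good, and your own first observation ($Z(S)=\Z_p^2\setminus\set{0}$ forces $1_S$ constant, impossible for $0<|S|<p^2$) finishes the proof. The paper reaches the same contradiction by a purely combinatorial count: every line meets $S$ in exactly $k=|S|/p$ points with $2\le k\le p-1$, and the $p+1$ lines through a point $x\notin S$ partition $\Z_p^2\setminus\set{x}$, so $|S|=(p+1)k\equiv k\not\equiv 0\bmod p$, contradicting $p\mid|S|$. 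Either finish is short; replace your closure/parity program with one of them.
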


Iosevich et al. \cite{IMP2017} has already proved this theorem, but
we provide an easy combinatorial proof for one of the two directions
and a short one for the other direction using R\'edei's theorem.

\begin{prop}\label{thm91}
Let $S$ be a spectral set of $\Z_p^2$. Then $S$ tiles $\Z_p^2$.
\end{prop}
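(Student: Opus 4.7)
The plan is to exploit the spectrality condition directly, using only one nontrivial character in the spectrum to force a strong equidistribution property on $S$. First, by Lemma \ref{lemeltol}, I reduce to the case $0 \in S$ and $0 \in \Lambda$, where $\Lambda \subseteq \widetilde{\mathbb{Z}_p^2}$ is a spectrum for $S$. The cases $|S| \leq 1$ and $|S| = p^2$ are trivial (singleton sets tile with complement $\mathbb{Z}_p^2$, and $\mathbb{Z}_p^2$ tiles with complement $\{0\}$), so I may assume $|S| = |\Lambda| \geq 2$.

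Next, pick any $\lambda \in \Lambda \setminus \{0\}$. Orthogonality of $\chi_0 \equiv 1$ and $\chi_\lambda$ on $S$ gives
\[
\sum_{s \in S} \chi_\lambda(s) = 0.
\]
Since $\chi_\lambda$ is a nontrivial character of $\mathbb{Z}_p^2$, its kernel $L := \ker \chi_\lambda$ is a subgroup of order $p$, and the quotient $\mathbb{Z}_p^2 / L$ is cyclic of prime order $p$. Grouping the sum by cosets of $L$ and writing $c_g = |S \cap (g + L)|$ for $g + L \in \mathbb{Z}_p^2 / L$, the vanishing sum becomes
\[
0 = \sum_{g + L} c_g \, \overline{\chi_\lambda}(g),
\]
where $\chi_\lambda$ descends to a nontrivial character of $\mathbb{Z}_p^2 / L \cong \mathbb{Z}_p$. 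This is a vanishing $\mathbb{Q}_{\geq 0}$-linear combination of primitive $p$-th roots of unity; by Proposition \ref{propsqfree} applied to $\mathbb{Z}_p$ (whose only subgroup of prime order is itself), or equivalently by the irreducibility of $\Phi_p(x)$ over $\mathbb{Q}$, the coefficients $c_g$ must all be equal.

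Hence $S$ meets every coset of $L$ in exactly $|S|/p$ elements, forcing $p \mid |S|$; combined with the assumption $2 \leq |S| < p^2$, this gives $|S| = p$ and $c_g = 1$ for every coset. In particular $S$ is a transversal of $L$, so $S + L = \mathbb{Z}_p^2$ with each element uniquely written, exhibiting $L$ as a tiling complement for $S$.

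The only delicate point is the jump from a single orthogonality relation to full equidistribution, which relies crucially on the quotient $\mathbb{Z}_p^2 / L$ being cyclic of prime order: for such a quotient the non-Pompeiu structure collapses to ``constant functions only,'' and no further analysis of the full spectrum $\Lambda$ is needed. This is precisely where the argument would break down in, say, $\mathbb{Z}_p^3$ or $\mathbb{Z}_{pq}$, where nontrivial non-Pompeiu functions exist beyond constants.
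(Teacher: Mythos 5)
There is a genuine gap at the crucial step. From a single nontrivial $\lambda\in\Lambda$ you correctly deduce that $S$ is equidistributed over the cosets of $L=\ker\chi_\lambda$, hence $p\mid |S|$. But the inference ``$p\mid|S|$ and $2\le|S|<p^2$ therefore $|S|=p$'' is false for every $p\ge 3$: the values $|S|=2p,3p,\dotsc,(p-1)p$ are all consistent with everything you have derived, and nothing in your argument excludes them. This intermediate range is exactly the nontrivial content of the proposition; a single orthogonality relation only controls the distribution of $S$ along one pencil of parallel lines, and a set meeting each coset of one fixed subgroup $L$ in $k\ge 2$ points need not be (and in this range cannot be shown by this relation alone to be) a transversal of anything. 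Only for $p=2$ does your numerical conclusion actually follow.

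To close the gap you must use more of the spectrum, which is what the paper does. Since $|\Lambda|=|S|\ge 2p>p$ and each of the $p+1$ direction classes consists of only $p$ parallel lines, every direction class contains a line with two distinct elements $\lambda,\lambda'$ of $\Lambda$; the relation $\sum_{s\in S}\chi_{\lambda-\lambda'}(s)=0$ then forces equidistribution of $S$ along the pencil orthogonal to $\lambda-\lambda'$, and letting the direction vary shows $S$ meets \emph{every} line in exactly $k=|S|/p$ points with $2\le k<p$. Taking a point $x\notin S$ (possible as $|S|<p^2$), the $p+1$ lines through $x$ partition $\Z_p^2\setminus\{x\}$, giving $|S|=(p+1)k$, which is not divisible by $p$ --- a contradiction. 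Your treatment of the case $|S|=p$ (transversal of $L$, hence $S+L=\Z_p^2$) agrees with the paper and is fine; it is the elimination of $p<|S|<p^2$ that is missing.
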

\begin{proof}
Let $S$ be a spectral set. We might assume $|S|>1$, since one
element sets clearly tile every group. The corresponding spectrum
$\Lambda$ is also of size at least $2$. Then there is a nontrivial
irreducible representation $\psi$ of $\Z_p^2$ such that $\sum_{s \in
S} \psi(s)=0$. We may also assume $|S|=|\La|<p^2$.

Representations of $\mathbb{Z}_p^2$ can be parametrized by the
elements of $\mathbb{Z}_p^2$. For $u \in \mathbb{Z}_p^2$ let
$\chi_u(v)=e^{\frac{2 \pi i \langle u,v \rangle}{p}}$, where the
scalar product of $\langle u,v \rangle$ is taken modulo $p$. This
can be written as $\sum_{j=0}^{p-1} a_je^{\frac{2 \pi i}{p}j}$,
where $a_j$'s are integers, which are determined in the following
way.

From now on we may also think of $\Z_p^2$ as a $2$ dimensional
vector space over $\Z_p$. Cosets of $1$ dimensional subspaces are
called lines. Let $u'$ be a nonzero vector orthogonal to $u$. Then
$\langle u,v \rangle$ is constant on every coset of the subgroup
generated by $u'$. Basically, we count the intersection of $S$ with
the elements the set of lines parallel with $\langle u' \rangle$.
The expression $\sum_{j=0}^{p-1} a_je^{\frac{2 \pi i }{p}j}=0$ if
and only if $a_j$ is a constant sequence so every element of this
class of parallel lines contains the same amount of elements of $S$
(i.e. $S$ is equidistributed on these set of parallel lines). As a
consequence we get that $p \mid |S|$.

If $|S|=p$, then by the previous argument we have that $S$
intersects each element of a class of parallel lines once. Then $S$
clearly tiles $\Z_p^2$.

Thus we may assume $p+1<2p\le |\Lambda|=|S|<p^2 $. It is enough to
show that such spectral set does not exist. Each class of parallel
lines consists of $p$ lines. Thus we have that for every class of
parallel lines, at least one line contains at least two elements of
$\Lambda$. Thus using the argument above we have that every element
of every class of parallel lines contains the same amount of
elements of $S$. This means that every line contains  $k$ elements
of $S$ for some fixed number $p>k \ge 2$.

Let $x \in \Z_p^2 \setminus S$. Take every line containing $x$.
These lines give a disjoint cover of $\Z_p^2 \setminus \{ x\}$.
Since each of them contains $k\ge 2$ elements we have $|S|=(p+1)k$,
which is not divisible by $p$, a contradiction.
\end{proof}
\begin{prop}
Let $S$ be a set in $\Z_p^2$, which tiles. Then $S$ is spectral.
\end{prop}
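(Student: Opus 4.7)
The plan is to reduce to the case $|S|=p$ via the size of a tile, then apply R\'edei's theorem (Theorem \ref{redei}) to split into two subcases, in each of which an explicit spectrum can be exhibited.

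First, since $S$ tiles $\Z_p^2$, its cardinality divides $p^2$, so $|S|\in\{1,p,p^2\}$. The cases $|S|=1$ (with spectrum $\{0\}$) and $|S|=p^2$ (with spectrum the whole group) are trivial. So assume $|S|=p$, and let $T\subseteq\Z_p^2$ be a tiling complement, so that $\Z_p^2=S+T$ with $|T|=p$. By Lemma \ref{lemeltol}\ref{itemeltols}, translating does not affect spectrality, so we may assume $0\in S$ and $0\in T$. Now R\'edei's theorem applied to the factorization $\Z_p^2=S+T$, with both summands of prime cardinality containing $0$, forces at least one of $S,T$ to be a subgroup of $\Z_p^2$ of order $p$.

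In the first subcase $S=H$ is a subgroup of order $p$. Let $H^{\perp}\subseteq\widetilde{\Z_p^2}$ be its annihilator, and let $\Lambda$ be any set of coset representatives of $H^{\perp}$ in $\widetilde{\Z_p^2}$, so $|\Lambda|=p=|S|$. For distinct $\lambda,\lambda'\in\Lambda$, $\chi_{\lambda-\lambda'}$ is nontrivial on $H$, so $\sum_{s\in H}\chi_{\lambda-\lambda'}(s)=0$, proving orthogonality of $\{\chi_{\lambda}|_S:\lambda\in\Lambda\}$. Hence $S$ is spectral with spectrum $\Lambda$.

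In the second subcase $T=H$ is a subgroup of order $p$. I will take $\Lambda=H^{\perp}$, which has cardinality $p^2/p=p=|S|$. For distinct $\lambda,\lambda'\in H^{\perp}$, set $\mu=\lambda-\lambda'\neq 0$. From $\Z_p^2=S+T$ (with unique representation) and the fact that $\mu$ is a nontrivial character,
\[ 0=\sum_{g\in\Z_p^2}\chi_{\mu}(g)=\Bigl(\sum_{s\in S}\chi_{\mu}(s)\Bigr)\Bigl(\sum_{t\in H}\chi_{\mu}(t)\Bigr). \]
Since $\mu\in H^{\perp}$, the second factor equals $|H|=p\neq 0$, forcing $\sum_{s\in S}\chi_{\mu}(s)=0$. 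Thus $\{\chi_{\lambda}|_S:\lambda\in H^{\perp}\}$ is an orthogonal family of the right cardinality, and $S$ is spectral.

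There is essentially no obstacle here: the only nonroutine ingredient is R\'edei's theorem, which is quoted as Theorem \ref{redei}, and the rest is a clean character computation exploiting the factorization $\Z_p^2=S+T$ with a subgroup on one side.
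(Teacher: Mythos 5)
Your proof is correct and follows essentially the same route as the paper: reduce to $|S|=p$, invoke R\'edei's theorem on the factorization $\Z_p^2=S+T$, and in each case take the annihilator of the subgroup (or coset representatives thereof) as the spectrum. Your character-sum identity $0=\bigl(\sum_{s\in S}\chi_{\mu}(s)\bigr)\bigl(\sum_{t\in H}\chi_{\mu}(t)\bigr)$ is just a more explicit rendering of the paper's remark that $S$ is equidistributed on the lines orthogonal to $T$.
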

\begin{proof}
We may assume that $1<|S|<p^2$. Then $S$ and its tiling complements
are of cardinality $p$. Using Theorem \ref{redei}
 we obtain that either $S$ or $T$ is a subgroup of $\Z_p^2$.

Subgroups are clearly spectral sets. If $T$ is a subgroup, then $S$
is a complete set of coset representatives of $T$. Let $U$ denote
the subgroup of $\Z_p^2$ consisting of vectors orthogonal to $T$.
Then clearly $S$ is equidistributed on the orthogonal lines for $u
\in U$ so $U$ is a spectrum for $S$.
\end{proof}

\section*{Acknowledgements}
The authors wish to thank the anonymous referees, as well as the editor, Professor Izabella \L{}aba, for improving the quality of our paper with their suggestions.

\noindent Research of G. Kiss was supported by the Hungarian National
Foundation for Scientific Research, Grant No. K 124749 and the
internal research project R-AGR-0500.

\noindent Research of G. Somlai was supported by the Hungarian
National Foundation for Scientific Research, Grant No. K 115799.

\noindent Research of M. Vizer was supported by the Hungarian
National Foundation for Scientific Research, Grant No. SNN 116095,
129364, KH 130371 and K 116769.


\begin{thebibliography}{99}
\bibitem{B1979} L. Babai. Spectra of Cayley graphs. \textit{Journal of Combinatorial Theory, Series B}, \textbf{27}(2), 180--189, 1979.

\bibitem{BST1973} L. Brown, B. Schreiber, B. A. Taylor. Spectral synthesis and the Pompeiu problem, {\it Annales de l'institut Fourier} {\bf 23}(3), 125--154, 1973.


\bibitem{C1944} L. Chakalov. Sur un probleme de D. Pompeiu. \textit{Godishnik Univ. Sofia, Fac. Phys.-Math.},
Livre 1, \textbf{40}, 1--14, 1944.

\bibitem{CM1999} E. M. Coven, A. Meyerowitz. Tiling the integers with translates of one finite set. \textit{Journal of Algebra}, \textbf{212}(1), 161--174, 1999.

\bibitem{DL2014} D. E. Dutkay, C. K. Lai. Some reductions of the spectral set conjecture to integers. \textit{Mathematical Proceedings of the Cambridge Philosophical Society}, \textbf{156}(1), 123--135, 2014.


\bibitem{FFLS2015} A. Fan, S. Fan, L. Liao, R. Shi. Fuglede's conjecture holds in $\mathbb{Q}_p$. \textit{ArXiv preprint}, 2015: \url{https://arxiv.org/abs/1512.08904}

\bibitem{FMM2006} B. Farkas, M. Matolcsi, P. M\'ora. On Fuglede's conjecture and the existence of universal spectra. \textit{Journal of Fourier Analysis and Applications}, \textbf{12}(5), 483--494, 2006.

\bibitem{F1974} B. Fuglede. Commuting self-adjoint partial      differential operators and a group theoretic problem. \textit{Journal of Functional Analysis}, \textbf{16}(1), 101--121, 1974.
\bibitem{G1988} N. Garofalo. A new result on the Pompeiu problem. \textit{Rend. Sem. Mat. Univ. Pol. Torino} P.D.E and Geometry, 25--38, 1988.

\bibitem{IKP99} Iosevich, N. Katz, S. Pedersen. Fourier bases and a distance problem of Erd\H{o}s. \textit{Math. Res. Lett.},  \textbf{6} (2), 251--255, 1999.

\bibitem{IKT2003} A. Iosevich, N. Katz, T. Tao. The Fuglede spectral conjecture holds for convex planar domains. \textit{Mathematical Research Letters}, \textbf{10}(5), 559--569, 2003.

\bibitem{IMP2017} A. Iosevich, A. Mayeli, J. Pakianathan. The Fuglede conjecture holds in $\mathbb{Z}_p \times \mathbb{Z}_p$. \textit{Analysis \& PDE}, \textbf{10}(4), 757--764, 2017.

\bibitem{KLV2016} G. Kiss, M. Laczkovich, Cs. Vincze. The discrete Pompeiu problem on the plane. \textit{Monatshefte f\"ur Mathematik}, \textbf{186}(2), 299--314, 2018.


\bibitem{KM2006} M. N. Kolountzakis, M. Matolcsi. Complex Hadamard matrices and the spectral set conjecture. \textit{Collectanea Mathematica}, Vol. Extra, 281--291, 2006.

\bibitem{KM2006a} M. N. Kolountzakis, M. Matolcsi. Tiles with no spectra. \textit{Forum Mathematicum} \textbf{18}(3), 519--528, 2006.

\bibitem{L2001} I. \L aba. Fuglede's conjecture for a union of two intervals. \textit{Proceedings of the American Mathematical Society}, \textbf{129}(10), 2965--2972, 2001.

\bibitem{L2002} I. \L aba. The spectral set conjecture and multiplicative properties of roots of polynomials. \textit{Journal of the London Mathematical Society}, \textbf{65}(3), 661--671, 2002.

\bibitem{LS2004} M. Laczkovich, G. Sz\'ekelyhidi. Harmonic analysis on discrete abelien group. {\it Proc. Am. Math. Soc.} {\bf 133}(6), 1581--1586, 2004.

\bibitem{vanishingsum} T. Y. Lam, K. H. Leung. On Vanishing Sums of Roots of Unity. \textit{Journal of Algebra} \textbf{224}, 91--109, 2000.



\bibitem{Ma}
S.~L. Ma. Polynomial addition sets. \textit{Ph.D. thesis},
University of Hong
  Kong, 1985.

\bibitem{MK17} R. D. Malikiosis, M. N. Kolountzakis. Fuglede's conjecture on cyclic groups of order $p^nq$. \textit{Discrete Analysis}, 2017:12, 16pp.

\bibitem{M2004} M. Matolcsi. Fuglede's conjecture fails in dimension 4. \textit{Proceedings of the American Mathematical Society}, \textbf{133}(10), 3021--3026, 2005.



\bibitem{Newman1997} D.J. Newman. Tesselations of integers. \textit{J. Number Theory} \textbf{9}, 107--111, 1997.


\bibitem{P1929} D. Pompeiu. Sur une propri\'et\'e int\'egrale des fonctions de deux variables r\'eelles. \textit{Bull. Sci. Acad. Roy. Belgique}, \textbf{5}(15), 265--269, 1929.

\bibitem{Pott}
Alexander Pott. Finite geometry and character theory.
\textit{Lecture Notes in Mathematics}, vol. 1601, Springer-Verlag,
Berlin, 1995.

\bibitem{Pu2013} M. J. Puls. The Pompeiu problem and discrete groups. \textit{Monatshefte f\"ur Mathematik}, \textbf{172}(3-4), 415--429, 2013.

\bibitem{R1997} A. G. Ramm. The Pompeiu problem. \textit{Applicable Analysis}, \textbf{64}(1-2), 19--26, 1997.

\bibitem{Ram2017} A. G. Ramm. Solution to the Pompeiu problem and the related symmetry problem. {\it Applied Mathematics Letters}, {\bf 63},  28--33, 2017.

\bibitem{Redei65}
L. R\'edei. Die neue Theorie der endlichen abelschen Gruppen und
Verallgemeinerung des Hauptsatzes von Haj\'os. \textit{Acta Math.
Acad. Sci. Hung.}, \textbf{16}, 329--373, 1965.

\bibitem{SchmidtFDM}
B.~Schmidt. Characters and cyclotomic fields in finite geometry.
\textit{Lecture Notes in Mathematics}, vol. 1797, Springer-Verlag,
Berlin, 2002.

\bibitem{Shi18}
R. Shi. Fuglede's conjecture holds on cyclic groups $\Z_{pqr}$.
\textit{ArXiv preprint}, 2018:
\url{https://arxiv.org/abs/1805.11261}

\bibitem{reprsteinberg} B. Steinberg. Representation theory of finite groups: An introductory approach. \textit{Springer Science and Business media}, 2012.

\bibitem{Sz2001}
L. Sz\'ekelyhidi. A Wiener Tauberian theorem on discrete Abelian
torsion groups. {\it Ann. Acad. Paedag. Cracov. Studia Math. I},
{\bf 4}, 147--150, 2001.

\bibitem{T2004} T. Tao. Fuglede's conjecture is false in 5 and higher dimensions. \textit{Mathematical Research Letters}, \textbf{11}(2), 251--258, 2004.

\bibitem{Z1992} L. Zalcman. A bibliographic survey of the Pompeiu problem. \textit{Approximation by solutions of partial differential equations}, Kluwer
Acad., Dordrecht. \textbf{365}, 185--194, 1992.

\bibitem{Ze1978} D. Zeilberger. Pompeiu's problem in discrete space. {\it Proc. Nat. Acad. Sci. USA} {\bf 75}(8), 3555--3556, 1978.




\end{thebibliography}
\end{document}